\tikzset{every node/.style={font=\normalsize}}
\tikzset{corners/.style={draw,fit={#1},rectangle,inner sep=0}}
\newcommand{\mf}{\mathfrak}
\newcommand{\mc}{\mathcal}
\newcommand{\msf}{\mathsf}
\newcommand{\mbf}{\mathbf}
\newcommand{\R}{\mathbb{R}}  
\newcommand{\Z}{\mathbb{Z}}
\newcommand{\Zpos}{\mathbb{Z}_{\ge0}}
\newcommand{\Q}{\mathbb{Q}}
\newcommand{\e}{\varepsilon}
\renewcommand{\d}{\delta}
\renewcommand{\o}{\omega}
\newcommand{\bE}{\mathbb{E}}
\newcommand{\norm}[1]{\left\lVert#1\right\rVert}
\DeclarePairedDelimiter\floor{\lfloor}{\rfloor}
\newcommand{\lb}{\llbracket}
\newcommand{\rb}{\rrbracket}
\newcommand{\ls}{\lesssim}
\newcommand{\gs}{\gtrsim}
\newcommand{\wh}{\widehat}
\newcommand{\wt}{\widetilde}
\DeclareMathOperator*{\argmax}{argmax}
\newcommand{\polylog}{\mathrm{polylog}}
\newcommand{\E}[1][\@nil]{%
    \def\tmp{#1}%
    \ifx\tmp\@nnil
    \mathbb{E}
    \else
    \mathbb{E}\left[#1\right]
    \fi}
\renewcommand{\P}[1][\@nil]{%
    \def\tmp{#1}%
    \ifx\tmp\@nnil
    \mathbb{P}
    \else
    \mathbb{P}\left(#1\right)
    \fi}
\DeclareMathOperator{\Var}{Var}
\newcommand{\law}{\overset{d}{=}}
\newcommand{\dto}{\xrightarrow{\;d\;}}
\newcommand{\pto}{\xrightarrow{\;p\;}}
\newcommand{\1}{\mathbf{1}}
\newcommand\given[1][]{\:\vert\:#1}
\renewcommand{\H}{\mathcal{H}}
\newcommand{\V}{\mathcal{V}}
\newcommand{\BB}{\mathcal{B}}
\newcommand{\G}{\Gamma}
\newcommand{\g}{\gamma}
\newcommand{\gap}{\Upsilon}
\newcommand{\A}{\mathsf{A}}
\newcommand{\B}{\mathsf{B}}
\newcommand{\EE}{\mathsf{E}}
\newcommand{\F}{\mathsf{F}}
\newcommand{\FF}{\mathscr{F}}
\newcommand{\GG}{\mathscr{G}}
\newcommand{\QQ}{\mathbf{Q}}
\newcommand{\cons}{\mathrm{con}}
\newcommand{\hwy}{\mathrm{hwy}}
\newcommand{\exc}{\mathrm{exc}}
\newcommand{\Exc}{\Pi^{\exc}}
\newcommand{\bkwt}{\mathsf{X}}
\newcommand{\vtwt}{\mathsf{Y}}
\newcommand{\bk}{\mathrm{bulk}}
\newcommand{\f}{\mathrm{full}}
\newcommand{\ee}{\mathbf{e}_2}
\crefname{figure}{Figure}{Figures}
\theoremstyle{plain}
\newtheorem{theorem}{Theorem}[section]
\newtheorem{lemma}[theorem]{Lemma}
\newtheorem{corollary}[theorem]{Corollary}
\newtheorem{definition}[theorem]{Definition}
\theoremstyle{definition}
\newtheorem{remark}[theorem]{Remark}
\numberwithin{equation}{section}
\title[Pinning, diffusive fluctuations, and Gaussian limits for half-space polymers]{Pinning, diffusive fluctuations, and Gaussian limits\\ for half-space directed polymer models}
\author{Victor Ginsburg \orcidlink{0000-0001-9399-6748}}
\address{Department of Mathematics, UC Berkeley, Berkeley, CA, USA}
\email{victor@math.berkeley.edu}
\begin{document}

\begin{abstract}
    Half-space directed polymers in random environments are models of interface growth in the presence of an attractive hard wall. They arise naturally in the study of wetting and entropic repulsion phenomena. \cite{KarDepinningQuenchedRandomness1985} predicted a ``depinning" phase transition as the attractive force of the wall is weakened.
    This phase transition has been rigorously established for integrable models of half-space last passage percolation, i.e. half-space directed polymers at zero temperature, in a line of study tracing back to the works \cite{BRAlgebraicAspectsIncreasing2001,BRAsymptoticsMonotoneSubsequences2001,BRSymmetrizedRandomPermutations2001}.
    On the other hand, for integrable positive temperature models, the first rigorous proof of this phase transition has only been obtained very recently through a series of works \cite{BWIdentityDistributionFullSpace2023,IMSSolvableModelsKPZ2022, BCDKPZExponentsHalfspace2023,DZHalfspaceLoggammaPolymer2024} on the half-space log-Gamma polymer.
    In this paper we study a broad class of half-space directed polymer models with minimal assumptions on the random environment.
    We prove that an attractive force on the wall strong enough to macroscopically increase the free energy induces phenomena characteristic of the subcritical ``bound phase," namely the pinning of the polymer to the wall and the diffusive fluctuations and limiting Gaussianity of the free energy.
    Our arguments are geometric in nature and allow us to analyze the positive temperature and zero temperature models simultaneously.
    Moreover, given the macroscopic free energy increase proved in \cite{IMSSolvableModelsKPZ2022} for the half-space log-Gamma polymer, our arguments can be used to reprove the results of \cite{IMSSolvableModelsKPZ2022,DZHalfspaceLoggammaPolymer2024} on polymer geometry and free energy fluctuations in the bound phase.
\end{abstract}

\tikzexternaldisable 
\maketitle
\tikzexternalenable

\setcounter{tocdepth}{1}

\tableofcontents


\section{Introduction, main results, and proof ideas}

Directed polymers in random environments, introduced in 
\cite{HHPinningRougheningDomain1985, ISDiffusionDirectedPolymers1988,  BolNoteDiffusionDirected1989},
are a well-studied family of models in mathematical physics.
The full-space directed polymer is widely believed to belong to the KPZ universality class.
We refer the reader to the books \cite{GiaRandomPolymerModels2007, denRandomPolymersEcole2009, ComDirectedPolymersRandom2017} for further background on directed polymers.

Half-space directed polymers in random environments were introduced by Kardar \cite{KarDepinningQuenchedRandomness1985} as a natural model for wetting and entropic repulsion phenomena that occur as an interface grows in the presence of an attractive hard wall \cite{AbrSolvableModelRoughening1980, BHLCriticalWettingThree1983, PSWSystematicsMultilayerAdsorption1982}.
Kardar predicted a ``depinning" phase transition as the attractive force of the wall is weakened: in the subcritical or ``bound'' phase, the polymer is ``pinned" to the wall; in the supercritical or ``unbound" phase, the polymer is entropically repulsed away from the wall.

This phase transition was first rigorously established for geometric and Poissonian half-space last passage percolation (LPP), two integrable zero temperature half-space directed polymer models, by Baik--Rains \cite{BRAlgebraicAspectsIncreasing2001,BRAsymptoticsMonotoneSubsequences2001,BRSymmetrizedRandomPermutations2001}.
They proved that the last passage time (i.e. zero temperature free energy) exhibits Gaussian statistics in the bound phase and KPZ universality class statistics in the critical and unbound phases.
Analogous results were later obtained for exponential half-space LPP by Sasamoto--Imamura \cite{SIFluctuationsOneDimensionalPolynuclear2004} and Baik--Barraquand--Corwin--Suidan \cite{BBCSFacilitatedExclusionProcess2018,BBCSPfaffianSchurProcesses2018}.

A recent flurry of activity has led to a comparable mathematical understanding of the depinning phase transition for integrable positive temperature half-space polymer models.
We only mention a handful of works in the following paragraph (also in the preceding paragraph), and we encourage the reader to consult \cite[Section 1.4]{BCDKPZExponentsHalfspace2023} for a far more comprehensive review of the literature on this phase transition in integrable half-space models.

The depinning phase transition for the point-to-line half-space log-Gamma (HSLG) polymer has recently been proved by Barraquand--Wang \cite{BWIdentityDistributionFullSpace2023}, and for the point-to-point HSLG polymer by Imamura--Mucciconi--Sasamoto \cite{IMSSolvableModelsKPZ2022}.
Very recently Barraquand--Corwin--Das \cite{BCDKPZExponentsHalfspace2023} extended the results of \cite{IMSSolvableModelsKPZ2022} on the HSLG polymer in the unbound phase, and moreover established the KPZ exponents ($1/3$ for the free energy fluctuations, $2/3$ for the transversal correlation length) in the critical and supercritical regimes.
The main technical contribution of \cite{BCDKPZExponentsHalfspace2023} is the construction of the HSLG line ensemble, a Gibbsian ensemble of half-infinite lines whose top line is the point-to-point HSLG polymer free energy.
In \cite{DZHalfspaceLoggammaPolymer2024}, Das--Zhu applied the Gibbs property (invariance under local resampling) of the HSLG line ensemble to confirm the predicted pinning of the HSLG polymer to the wall in the bound phase.
Specifically, \cite{DZHalfspaceLoggammaPolymer2024} proved that the endpoint of the point-to-line HSLG polymer typically lies within an $O(1)$-neighborhood of the wall.
Gibbsian line ensembles have been a focal point in the study of random planar growth models since their introduction in the seminal work \cite{CHBrownianGibbsProperty2014} of Corwin--Hammond, but a half-space Gibbsian line ensemble has yet to be constructed for other integrable polymer models, leaving open in those settings an analysis analogous to \cite{DZHalfspaceLoggammaPolymer2024}.

All the works mentioned so far depend essentially on exact formulas and combinatorial identities available for the integrable models studied therein.
It is expected that such methods cannot be adapted to non-integrable settings.
However, the depinning phase transition is predicted for a quite broad class of half-space polymer models.
This prediction motivates the present paper: our main contribution is to establish a robust criterion for the bound phase that applies to many non-integrable models.
Before describing our results, let us conclude this discussion by mentioning a related line of work.

Recently there has been an effort to develop geometric techniques that are applicable to broad classes of polymer models, and with which sharp results can be obtained given mild inputs from integrable probability.
This originated with the pioneering work of Basu--Sidoravicius--Sly \cite{BSSLastPassagePercolation2016}, where they studied a full-space LPP model through the geometry of its geodesics (i.e. polymers at zero temperature).
Their work is in fact closely related to the present paper, and we will discuss it further in \cref{rem:alternate_proof_via_bigeodesic}.
Following \cite{BSSLastPassagePercolation2016}, a number of works have used polymer geometry as a means to probe the mechanisms underlying KPZ universality phenomena.
One such work is \cite{GHOptimalTailExponents2023}, which shares the present paper's theme of obtaining sharp fluctuation estimates without integrable inputs.
In \cite{GHOptimalTailExponents2023} the authors studied full-space LPP models satisfying two natural hypotheses: concavity of the limit shape and stretched exponential concentration of the last passage time.
They used a geometric argument to upgrade these hypotheses to the optimal tail exponents for the last passage time.
Their results and techniques give a geometric explanation for these optimal tail exponents, which previously had been predicted only on the basis of their appearance in integrable LPP models through correspondences with random matrix theory (the optimal exponents, $3$ for the lower tail and $3/2$ for the upper tail, match those of the Tracy--Widom GUE distribution).

We turn now towards defining the half-space directed polymer model and formulating our main results.
The reader may find it helpful to glance at \cref{fig:outline} (below) while parsing the coming definitions.

\subsection{Model}\label{sec:model}
Let $\H\coloneqq \{(x,t)\in\Z^2:x\ge 0\}$ be the half-space bounded on the left by the vertical axis $\V\coloneqq \{(0,t):t\in\Z\}$.
For $-\infty\le t_1\le t_2\le \infty$ we write $\lb t_1,t_2\rb\coloneqq [t_1,t_2]\cap\Z$, and define
\[
    \H_{\lb t_1,t_2\rb}\coloneqq \Zpos\times\lb t_1,t_2\rb\quad\text{and}\quad
    \V_{\lb t_1,t_2\rb}\coloneqq \{0\}\times\lb t_1,t_2\rb,
\]
where $\Zpos$ denotes the set of nonnegative integers.

We fix random variables $\bkwt,\vtwt$ with the following properties:
\footnote{We use the first two properties \ref{P1}, \ref{P2} throughout the paper, while \ref{P3} is imposed for technical reasons and plays no role in most of our arguments.
We will discuss these assumptions further in \cref{rem:optimal-exponents,rem:extend_to_hslg,rem:unbounded-support}.}
\begin{enumerate}[label=(\alph*)]
    \item\label{P1} $\bkwt>0$ and $\vtwt>0$ almost surely.
    \item\label{P2} $\bkwt$ and $\vtwt$ are subexponential, i.e. there exists $z>0$ such that $\bE[e^{z\bkwt}]<\infty$ and $\bE[e^{z\vtwt}]<\infty$.
    \item\label{P3} $\bkwt$ and $\vtwt$ have unbounded supports, i.e. $\P(\bkwt>z)>0$ and $\P(\vtwt>z)>0$ for all $z\in\R$.
\end{enumerate}
We also fix a collection of independent random variables $\o=(\o(x,t))_{(x,t)\in\H}$ indexed by $\H$ such that
\[
    \o(x,t)\law \begin{cases}
        \bkwt,&\quad x>0,\\
        \vtwt,&\quad x=0.
    \end{cases}
\]
We refer to $\o$ as the \emph{environment}, and to the individual variables $\o(x,t)$ as \emph{weights}.
We denote by $\P$ the law of $\o$ and by $\E$ the expectation with respect to $\P$.

Fix $(x_1,t_1),(x_2,t_2)\in\H$ with $t_1<t_2$.
By a \emph{path $\pi$ from $(x_1,t_1)$ to $(x_2,t_2)$} we mean a collection of points
\[
    \pi=\{(y_1,s_1),(y_2,s_2),\dots,(y_{|\pi|},s_{|\pi|})\}
\]
with $(y_1,s_1)=(x_1,t_1)$ and $ (y_{|\pi|},s_{|\pi|}) = (x_2,t_2)$, and
\[
(y_i,s_i)\in \H,\quad\text{and}\quad (y_{i+1},s_{i+1}) - (y_i,s_i)\in\{(-1,1),(1,1)\} \quad\text{for all } i.
\]
When we want to emphasize the endpoints of a path, we will write $\pi:(x_1,t_1)\to (x_2,t_2)$.
We routinely identify paths $\pi:(x_1,t_1)\to(x_2,t_2)$ with graphs (over the vertical axis) of functions $\pi:\lb t_1,t_2\rb\to\Zpos$ with $\pi(t_1)=x_1$ and $\pi(t_2)=x_2$.
We denote by $\Pi(x_1,t_1;x_2,t_2)$ the set of all paths $\pi:(x_1,t_1)\to (x_2,t_2)$.

We define the \emph{Hamiltonian} (sometimes called \emph{energy} or \emph{weight}) of a path $\pi$ by
\[
    H(\pi)\coloneqq \sum_{(x,t)\in\pi}\o(x,t),
\] 
and the \emph{half-space directed polymer partition function} by
\[
    Z(x_1,t_1;x_2,t_2)\coloneqq 
    \sum_{\pi\in\Pi(x_1,t_1;x_2,t_2)}
    e^{H(\pi)}.
\]
The partition function is the normalizing constant for the \emph{polymer measure}, the random Gibbs measure 
on $\Pi(x_1,t_1;x_2,t_2)$ given by
\[
    \Q^{(x_1,t_1;x_2,t_2)}(\{\pi\})\coloneqq \frac{e^{H(\pi)}}{Z(x_1,t_1;x_2,t_2)}.
\]
We refer to a path $\pi$ sampled from $\Q^{(x_1,t_1;x_2,t_2)}$ as a \emph{polymer}.
We define the \emph{half-space directed polymer free energy} by
\[
    F(x_1,t_1;x_2,t_2)\coloneqq \log Z(x_1,t_1;x_2,t_2),
\]
and the \emph{half-space last passage time} by
\begin{equation*}\label{eq:lpp_time}
    L(x_1,t_1;x_2,t_2)\coloneqq \sup_{\pi\in\Pi(x_1,t_1;x_2,t_2)}
    H(\pi).
\end{equation*}
A \emph{geodesic} is a maximizer of the above supremum, i.e. a path $\G:(x_1,t_1)\to(x_2,t_2)$ satisfying
\[
H(\G)=L(x_1,t_1;x_2,t_2).
\]
The structure of the underlying lattice guarantees that for any two geodesics, the path which is pointwise the left-most of the two is also a geodesic.
This phenomenon is known as \emph{polymer ordering} in the literature (e.g. \cite[Lemma 11.2]{BSSLastPassagePercolation2016}), and we discuss it at length in \cref{sec:polymer_ordering}.
Polymer ordering implies that for any $(x_1,t_1),(x_2,t_2)\in\H$ with $\Pi(x_1,t_1;x_2,t_2)\ne\varnothing$, there exists a unique left-most geodesic $(x_1,t_1)\to(x_2,t_2)$.

\begin{center}
    \textbf{For the rest of the paper we denote by $n$ an even integer, so that $\Pi(0,0;0,n)\ne\varnothing$.}
\end{center}

\subsection{Main results}\label{sec:main_results}
We need the following definitions before formulating our results.
\begin{definition}[Bulk model]\label{def:bulk}
    Let $\o^\bk=(\o^\bk(x,t))_{(x,t)\in\H}$ be a collection of i.i.d. random variables satisfying
    \[
        \o^\bk(x,t) = \o(x,t)\; \text{ for } x>0, \quad\text{and}\quad  \o^\bk(x,t) \law \bkwt\; \text{ for } x=0.
    \]
    We note that $\o$ and $\o^\bk$ are \emph{equal} on $\H\setminus\V$, not only equal in distribution.
    We define
    \[
        F^\bk(x_1,t_1;x_2,t_2)\coloneqq \log\left(\sum_{\pi\in\Pi(x_1,t_1;x_2,t_2)}\exp\left(\sum_{(x,t)\in\pi}\o^\bk(x,t)\right)\right)
    \]
    and
    \[
        L^\bk(x_1,t_1;x_2,t_2)\coloneqq \sup_{\pi\in\Pi(x_1,t_1;x_2,t_2)}\sum_{(x,t)\in\pi}\o^\bk(x,t).
    \]
    Finally, we define
    \begin{equation}\label{eq:asymp_free_energy_bulk}
        f^\bk\coloneqq \lim_{n\to\infty}\frac{\bE[F^\bk(0,0;0,n)]}{n}
        \quad\text{and}\quad \ell^\bk\coloneqq \lim_{n\to\infty}\frac{\bE[L^\bk(0,0;0,n)]}{n}.
    \end{equation}
    The limits exist by superadditivity and are finite because $\bkwt$ is subexponential.
\end{definition}

The last passage time $L$ is the polymer free energy $F$ at zero temperature (we elaborate on this in \cref{sec:pos-zero}).
As a consequence, our arguments and results will usually apply simultaneously to $F$ and $L$.
It is therefore convenient to introduce a placeholder symbol representing either $F$ or $L$---we will use the letter $G$.
We will still refer to $G$ as the \emph{free energy}.
We denote by $g\in\{f,\ell\}$ the matching lowercase version: for example, we can rewrite \eqref{eq:asymp_free_energy_bulk} as $g^\bk\coloneqq \lim_{n\to\infty}n^{-1}\bE[G^\bk(0,0;0,n)]$.

\begin{definition}[LLN separation]\label{def:lln_separation}
    We say that the directed polymer model has \emph{law of large numbers (LLN) separation} if
    \begin{equation}\label{eq:DEF_lln_separation}
        g\coloneqq \lim_{n\to\infty}\frac{\bE[G(0,0;0,n)]}{n}>g^\bk.
    \end{equation}
    As with \eqref{eq:asymp_free_energy_bulk}, this limit exists by superadditivity, and is finite because $\bkwt,\vtwt$ are subexponential.
\end{definition}

\begin{remark}[Alternative definition of $g^\bk$]\label{rem:fullhalf}
    The bulk LLN $g^\bk$ is also the LLN for the free energy of the ``polymer excursion" in the original environment $\o$.
    More precisely, let $G^\exc(0,0;0,n)$ be the restriction of $G(0,0;0,n)$ to the set $\Exc(0,0;0,n)$ of paths $\pi\in\Pi(0,0;0,n)$ that satisfy $\pi\cap\V=\{(0,0),(0,n)\}$.
    Then
    \begin{equation}\label{eq:llnavoid}
        \lim_{n\to\infty}\frac{\bE[G^\exc(0,0;0,n)]}{n}=g^\bk.
    \end{equation}
    This identity is a corollary of the following distributional identity relating $G^\exc$ and $G^\bk$.
    Given $\pi\in\Exc(0,0;0,n)$, let $\pi'$ be the path obtained from $\pi$ by deleting the endpoints $\pi(0),\pi(n)$ and then translating by $(-1,-1)$.
    In symbols,  $\pi'(t)\coloneqq\pi(t+1)-1$ for $t\in\lb0,n-2\rb$.
    The map $\pi\mapsto\pi'$ defines a bijection $\Exc(0,0;0,n)\to \Pi(0,0;0,n-2)$, and we deduce the identity
    \begin{equation}\label{eq:llnavoid2}
        G^\exc(0,0;0,n) \law G^\bk(0,0;0,n-2)+\vtwt_1+\vtwt_2,
    \end{equation}
    where $\vtwt_1,\vtwt_2$ are independent copies of the vertical weight $\vtwt$.
    This immediately implies \eqref{eq:llnavoid}.
\end{remark}

The broad goal of this paper is to show that LLN separation \eqref{eq:DEF_lln_separation} gives rise to the bound phase: the polymer is pinned to $\V$, and the free energy has diffusive fluctuations and a Gaussian scaling limit.
The following is our main result on pinning.
\begin{theorem}[Pinning]\label{thm:pinning}
    There exist constants $C,C',C'',C'''>0$ and $k_0\ge1$ depending only on the law of $\o$, such that the following holds.
    Fix $G\in\{F,L\}$.
    Suppose the polymer model has LLN separation.
    Fix $t,x_1,x_2\ge0$ satisfying $t\ge k_0(x_1+x_2+1)+2$ and $\Pi(x_1,0;x_2,t)\ne\varnothing$.
    Also, fix $s_1,s_2\in\lb x_1+1,\,t-x_2-1\rb$ satisfying $s_2-s_1\ge k_0$.
    If $G=F$ then
    \begin{equation}\label{eq:pinning-thm1}
        \P[\Q^{(x_1,0;\,x_2,t)}\left(\pi\cap\V_{\lb s_1,\,s_2\rb} = \varnothing\right) > 
        C''e^{-C'''\,|s_2-s_1|}
        ]
        \le C\exp\left(-C'|s_2-s_1|^{1/3}\right).
    \end{equation}
    If $G=L$ and we denote by $\G$ the leftmost geodesic $(x_1,0)\to(x_2,t)$, then
    \begin{equation}\label{eq:pinning-thm2}
        \P[\G\cap\V_{\lb s_1,\,s_2\rb} = \varnothing]
        \le C\exp\left(-C'|s_2-s_1|^{1/3}\right).
    \end{equation}
\end{theorem}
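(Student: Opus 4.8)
\emph{The mechanism and the reduction.} Write $\gap\coloneqq g-g^\bk$, which is strictly positive precisely under LLN separation, and set $k\coloneqq s_2-s_1$ and $A\coloneqq\{\pi:\pi\cap\V_{\lb s_1,s_2\rb}=\varnothing\}$. The proof is a quantitative form of the heuristic that a path staying off $\V$ throughout a time interval of length $d$ is penalized by $\approx\gap d$ in log-partition function, which overwhelms the $\poly(d)$ ways such a forbidden excursion can be placed. I establish the bound for $G=F$ and deduce $G=L$ from it: by polymer ordering (\cref{sec:polymer_ordering}) the leftmost geodesic $\G$ meets $\V_{\lb s_1,s_2\rb}$ if and only if \emph{some} geodesic does, so $\{\G\cap\V_{\lb s_1,s_2\rb}=\varnothing\}=\{L^A>L^{A^c}\}$, where $L^A$ (resp.\ $L^{A^c}$) is the maximal energy over paths $(x_1,0)\to(x_2,t)$ avoiding (resp.\ meeting) $\V_{\lb s_1,s_2\rb}$; the inequalities below then bound $\P(L^A>L^{A^c})$.

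\emph{A decomposition with exact cancellation.} We have $\Q^{(x_1,0;x_2,t)}(A)=Z_A/Z$. For $\pi\in A$, let $\alpha\le s_1-1$ be the last visit of $\pi$ to $\V$ before the window and $\gamma\ge s_2+1$ its first visit after (the boundary cases, which need $x_1>0$ or $x_2>0$ and in which the excursion runs out to time $0$ or $t$, are controlled similarly---the hypothesis $t\ge k_0(x_1+x_2+1)+2$ provides the room---and I suppress them). Since $\pi$ lies in $\H\setminus\V$ throughout $\lb\alpha+1,\gamma-1\rb\supseteq\lb s_1,s_2\rb$, factoring at $(0,\alpha)$ and $(0,\gamma)$ gives $Z_A=\sum_{\alpha,\gamma}Z(x_1,0;0,\alpha)\,E_{\alpha,\gamma}\,Z(0,\gamma;x_2,t)$, where $E_{\alpha,\gamma}$ is the partition function of excursions from $(0,\alpha)$ to $(0,\gamma)$ that stay in $\H\setminus\V$ in between. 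Bounding $Z$ below by restricting to paths through $(0,\alpha)$ and $(0,\gamma)$, i.e.\ $Z\ge Z(x_1,0;0,\alpha)\,\wh M_{\alpha,\gamma}\,Z(0,\gamma;x_2,t)$ with $\wh M_{\alpha,\gamma}$ the \emph{unrestricted} partition function from $(0,\alpha)$ to $(0,\gamma)$ (its two $\V$-endpoints deleted), the outer factors cancel term by term and
\[
    \Q^{(x_1,0;x_2,t)}(A)\ \le\ \sum_{\alpha,\gamma}\ \frac{E_{\alpha,\gamma}}{\wh M_{\alpha,\gamma}}.
\]
By the shift bijection of \cref{rem:fullhalf} (equation \eqref{eq:llnavoid2} with the endpoints removed) one has $E_{\alpha,\gamma}\law Z^\bk(0,0;0,\gamma-\alpha-2)$ with $\gamma-\alpha-2\ge k$, so $\bE[\log E_{\alpha,\gamma}]\le g^\bk(\gamma-\alpha-2)+O(1)$ by superadditivity (\cref{def:bulk}); whereas $\bE[\log \wh M_{\alpha,\gamma}]=\bE[F(0,0;0,\gamma-\alpha)]-2\bE[\vtwt]\ge g(\gamma-\alpha)-o(\gamma-\alpha)$ by LLN separation (\cref{def:lln_separation}), with the unknown rate of this convergence absorbed into the choice of $k_0$. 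Hence $\bE[\log(E_{\alpha,\gamma}/\wh M_{\alpha,\gamma})]\le-\tfrac12\gap(\gamma-\alpha)$ once $\gamma-\alpha\ge k_0$.

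\emph{Concentration and a $t$-uniform sum.} On the event that every $\log E_{\alpha,\gamma}$ and $\log\wh M_{\alpha,\gamma}$ lies within $\tfrac1{16}\gap(\gamma-\alpha)$ of its mean, each summand is at most $e^{-\frac14\gap(\gamma-\alpha)}$, so $\Q^{(x_1,0;x_2,t)}(A)\le\sum_{\alpha,\gamma}e^{-\frac14\gap(\gamma-\alpha)}\le C''e^{-C'''k}$. The complementary event has probability at most the sum over $(\alpha,\gamma)$ of the corresponding deviation probabilities; feeding in the stretched-exponential concentration estimates for the free energy (established earlier in the paper), each such probability is at most $C\exp(-c\,(\gap(\gamma-\alpha))^{1/3})$. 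The key point is that for a given gap $d=\gamma-\alpha$ there are at most $d$ admissible pairs $(\alpha,\gamma)$---since $\gamma\ge s_2+1$ and $\alpha\le s_1-1$ force $\alpha$ into an interval of length $<d$---so the sum collapses to $\sum_{d\ge k}d\cdot C\exp(-c(\gap d)^{1/3})\le C\exp(-C'k^{1/3})$, with no dependence on $t$. This gives \eqref{eq:pinning-thm1}. The estimate \eqref{eq:pinning-thm2} follows by running the same argument with every partition function replaced by the corresponding last passage time and every sum by a maximum: $L^A\le\max_{\alpha,\gamma}\bigl(L(x_1,0;0,\alpha)+L^{\exc}_{\alpha,\gamma}+L(0,\gamma;x_2,t)\bigr)$ with $L^{\exc}_{\alpha,\gamma}\law L^\bk(0,0;0,\gamma-\alpha-2)$, while $L^{A^c}$ is bounded below by the energy of a path through $(0,\alpha)$, $(0,\gamma)$ and some point of $\V_{\lb s_1,s_2\rb}$, so again the outer terms cancel.

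\emph{Main obstacle.} The genuinely delicate feature is that the final estimate must be uniform in $t$: one cannot compare free energies on the scale of the full interval $\lb0,t\rb$, so the argument must be localized near the window, which is what the last-exit/first-return decomposition accomplishes---at the cost of a sum over exit and return times. That sum is tamed only because of (i) the exact cancellation of the outer factors, which reduces each $(\alpha,\gamma)$-term to the single comparison ``forbidden excursion versus unrestricted free energy, over one and the same strip of length $\gamma-\alpha$'', and (ii) the elementary fact that only $\lesssim\gamma-\alpha$ pairs share a given gap, so that summability holds with no reference to $t$. The exponent $1/3$ is inherited from the concentration estimates and is not expected to be optimal; see \cref{rem:optimal-exponents}.
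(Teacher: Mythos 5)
Your interior case is, modulo packaging, the paper's own proof: the paper isolates your comparison of the forbidden-excursion partition function against the unrestricted one over the same strip as \cref{lem:pin_lemma} (proved, as you do, from LLN separation, superadditivity, the shift identity of \cref{rem:fullhalf}, and the concentration estimate \cref{lem:large_dev}), and your last-exit/first-return decomposition with term-by-term cancellation of the outer factors is exactly the quotient inequality $\Q^{(x_1,0;\,x_2,t)}\bigl(\pi\cap\H_{\lb a,b\rb}\in\Exc(0,a;0,b)\bigr)\le\Q^{(0,a;\,0,b)}\bigl(\Exc(0,a;0,b)\bigr)$ followed by the $(a,b)$-indexed union bound in the paper's proof of \cref{thm:pinning}; the count of at most $d$ admissible pairs per gap $d$, and the reduction of $G=L$ to a maximum-versus-maximum comparison, likewise mirror the paper (which phrases the latter through the dictionary of \cref{sec:pos-zero}).

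The genuine gap is in the boundary cases you suppress. When the path never touches $\V$ before the window (or after it, or at all), the cancelled decomposition leaves you comparing an excursion partition function with a non-vertical endpoint, e.g. from $(x_1,0)$ to $(0,\gamma)$, against the corresponding unrestricted one, and this is not a routine repetition of the vertical case: to control $\bE[\log E]$ there one needs that the vertical direction maximizes the bulk LLN, i.e. $g^\bk(\ee)\ge g^\bk_\f(\theta)$ (\cref{lem:vertical_maximizes_g}, via \cref{lem:iid_full=half}), together with a positivity/superadditivity chain such as $F(x_1,0;x_2,t)\ge F(0,x_1;0,t-x_2)$ to lower-bound the unrestricted side --- none of which appears in your sketch. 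More importantly, your justification that ``$t\ge k_0(x_1+x_2+1)+2$ provides the room'' is not correct for the first-return boundary term: the theorem only assumes $s_1\ge x_1+1$ and $s_2-s_1\ge k_0$, so the first return time $\gamma\ge s_2+1$ can be as small as roughly $x_1+(s_2-s_1)$ while $x_1$ is arbitrarily large. In that regime the excursion strip is nearly diagonal: the exploitable mean separation is only of order $\gap\,(\gamma-x_1)\asymp\gap\,(s_2-s_1)$, whereas \cref{lem:large_dev} controls deviations only at a scale proportional to the full height $\gamma\approx x_1$, so the ``mean gap plus concentration'' step that you run in the interior case does not go through as stated for these terms. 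This is precisely the role played in the paper by the general-endpoint case of \cref{lem:pin_lemma} (with its hypothesis $t_2-t_1\ge k_0(x_1+x_2+1)$) and by the specific thresholds attached to the boundary sums in the paper's union bound; a complete write-up of your argument must either supply a corresponding estimate for boundary excursions in the regime $x_1+x_2\gg s_2-s_1$ or reorganize the decomposition so that such terms do not arise.
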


\begin{remark}[Transversal fluctuations]\label{rem:reformulating_pinning}
    An immediate corollary of \cref{thm:pinning} is that LLN separation implies that the polymer has $O(1)$ transversal fluctuations.
    To see this, notice that any path $\pi:(0,0)\to(0,n)$ with $\pi(n/2)>k$ must be disjoint from the vertical segment $\V_{\lb \frac{n}{2}-k,\; \frac{n}{2}+k\rb}$.
    Therefore by \cref{thm:pinning}, LLN separation implies
    \begin{align}\label{eq:tf}
        \P[\Q^{(0,0;\,0,n)}\left(\pi(n/2) > k\right) > C''e^{-C'''k}] \le C\exp\left(-C'k^{1/3}\right)\quad\text{for all } k\ge k_0.
    \end{align}
    This is in fact much stronger than $O(1)$ transversal fluctuations: it shows that the typical quenched distribution of the polymer midpoint $\pi(n/2)$ has an exponential tail.    
    Similarly, our proof of \cref{thm:pinning} can be adapted to show that, given LLN separation, the quenched distribution of the half-space point-to-line directed polymer endpoint typically has an exponential tail.
\end{remark}

\begin{remark}[Optimal exponents]\label{rem:optimal-exponents}
    The quenched exponential tail asserted for the polymer measure in \eqref{eq:pinning-thm1} is sharp, in the sense that if  $|s_2-s_1|\to\infty$, then for any fixed $\alpha>1$ we have
    \[
        \P\left(\Q^{(0,0;\,0,n)}(\pi\cap\V_{\lb s_1,\,s_2\rb}=\varnothing)>e^{-|s_2-s_1|^{\alpha}}\right)=1-o(1).
    \]
    Indeed, our proof of \cref{thm:pinning} implies that (roughly speaking)
    \[
        \P[\Q^{(0,0;\,0,n)}\bigl(\pi\cap\V_{\lb s_1,\, s_2\rb} = \varnothing\bigr) > e^{-|s_2-s_1|^\alpha}] \approx
        \P\left(F(0,s_1;0,s_2) < F^\bk(0,s_1;0,s_2) + |s_2-s_1|^\alpha\right).
    \]
    The free energies on the right-hand side grow linearly in $|s_2-s_1|$ (provided that $f,f^\bk <\infty$, which is known to hold under very mild hypotheses on the laws of $\bkwt,\vtwt$).
    The claim follows.

    Let us also comment on the (sub-)optimality of the exponent $1/3$ in the $\P$-probability bounds of \cref{thm:pinning}.
    As the last display suggests, the $\P$-probability in \eqref{eq:pinning-thm1} is approximately
    \begin{align*}
        \P\left(F(0,s_1;0,s_2) < F^\bk(0,s_1;0,s_2) + C'''|s_2-s_1|\right).
    \end{align*}
    Choosing $C'''>0$ small enough such that $f^\bk+C'''<f$ (this is possible due to LLN separation), 
    we find that the decay rate of the above probability as $|s_2-s_1|\to\infty$ 
    is governed by the large deviations theory of the free energies.
    The large deviations theory is in turn a function of the tails of the weights comprising the environment.
    If the weights have subexponential tails---as we stipulated in \cref{sec:model}\ref{P2}---then 
    the above probability can be shown to decay as $\approx e^{-|s_2-s_1|}$ 
    (see e.g. Liu--Watbled \cite{LWExponentialInequalitiesMartingales2009}).
    In other words, the optimal exponent is $1$.
    However, above we merely asserted an exponent of $1/3$; 
    this suboptimal exponent stems from a crude large deviations estimate that we use for convenience 
    (\cref{lem:large_dev}, see also \cref{rem:suboptimal_large_dev}).
    
    The above discussion also applies to the last passage time: the optimal exponent in \eqref{eq:pinning-thm2} is $1$.

    It is worth pointing out that if the weights were not subexponential, 
    then the $\P$-probabilities in \cref{thm:pinning} would not decay exponentially fast.
    For example, suppose the weights had stretched exponential tails, 
    e.g. $\P(\o(x,t)>z)=\exp(-z^\kappa)$ for some $\kappa\in(0,1)$ and all sufficiently large $z>0$.
    Then the $\P$-probabilities in \cref{thm:pinning} could not decay faster than $\approx\exp(-|s_2-s_1|^\kappa)$,
    as the following calculation shows.

    We consider the polymer $(0,0)\to(0,n)$, and for simplicity
    we set $s_1=1, s_2=n-1$.
    \footnote{Our techniques imply an analogue of \cref{thm:pinning} for weights with stretched exponential tails. Given such a result, the calculations for the case $s_1=1,s_2=n-1$ can be adapted to treat arbitrary $s_1,s_2$. We omit the details.}
    Let $\pi_*$ be the \emph{unique} path $(0,0)\to(0,n)$ with $\pi(n/2)=n/2$.
    Since $\pi_*\cap\V_{\lb 1,n-1\rb}=\varnothing$, we have
    \begin{align*}
        \Q^{(0,0;\,0,n)}\bigl(\pi\cap\V_{\lb 1,n-1\rb}=\varnothing\bigr)
        &\ge \Q^{(0,0;\,0,n)}\bigl(\{\pi_*\}\bigr)\\
        &=\frac{1}{1+e^{-H(\pi_*)}\sum_{\pi\ne \pi_*}e^{H(\pi)}}\\
        &\ge \frac{1}{1+e^{-\o(n/2, n/2)}Z'},
    \end{align*}
    where we used that $e^{H(\pi_*)}\ge e^{\o(n/2,n/2)}$ (since the weights are all $\ge0$), and where we write
    \[
        Z'\coloneqq\sum_{\pi\ne \pi_*}e^{H(\pi)} + e^{H(\pi_*) - \o(n/2,n/2) + \xi}
    \]
    where $\xi$ is an independent copy of $\o(n/2, n/2)$.
    Since $Z'\law Z(0,0;0,n)$, it follows by the subadditive ergodic theorem that $n^{-1}\log Z' \to f$ a.s.
    Note also that $Z'$ is independent of $\o(n/2, n/2)$.
    Therefore, conditionally given 
    \[
        \o(n/2,\,n/2)>(f+1)n,
    \]
    the following holds true almost surely:
    \[
        \Q^{(0,0;\,0,n)}\bigl(\pi\cap\V_{\lb 1,n-1\rb}=\varnothing\bigr)
        \ge \frac{1}{1+e^{-(f+1)n}Z'}
        = \frac{1}{1+e^{-n+\e_n}},
    \]
    where $\e_n\coloneqq\log Z'-fn$ is a random variable with $n^{-1}\e_n\to 0$ a.s.
    In particular, as $n\to\infty$ we have
    \begin{align*}
        &\P\Bigl(\Q^{(0,0;\,0,n)}\bigl(\pi\cap\V_{\lb 1, n-1\rb}=\varnothing\bigr) > Ce^{-C'n}\Bigr)\\
        &\;\ge 
        \P\Bigl(\Q^{(0,0;\,0,n)}\bigl(\pi\cap\V_{\lb 1, n-1\rb}=\varnothing\bigr) > Ce^{-C'n}\;\Big|\;\o(n/2,n/2)>(f+1)n\Bigr)
        \P\Bigl(\o(n/2,n/2)>(f+1)n\Bigr)\\
        &\;= (1-o(1))\cdot \exp\bigl(-(f+1)^\kappa \,n^\kappa\bigr).
    \end{align*}
\end{remark}

\cref{thm:pinning} will play a central role in the proofs of our other results, beginning with the following theorem concerning the diffusive fluctuations and asymptotic Gaussianity of the free energy.
\begin{theorem}
    [Free energy statistics]
    \label{thm:main}
    Fix $G\in\{F,L\}$.
    Suppose the polymer model has LLN separation.
    Then \footnote{We adopt the usual interpretation of the asymptotic notation $\asymp$, and similarly for $\ls,\, \gs,\,\Theta(\cdot)$, etc. For definitions of these, see \cref{sec:notation}.}
    \[
        \Var(G(0,0;0,n)) \asymp n,
    \]
    where the implicit constants depend only on the law of $\o$.
    Moreover,
    \[
        \frac{G(0,0;0,n)-\bE[G(0,0;0,n)]}{\sqrt{\Var(G(0,0;0,n))}} \dto \msf{N}(0,1)\quad\text{as } n\to\infty,
    \]
    where $\msf{N}(0,1)$ denotes the standard Gaussian distribution.
\end{theorem}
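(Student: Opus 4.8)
The plan is to use \cref{thm:pinning} to realize $G(0,0;0,n)$, up to a negligible error, as a sum of $\Theta(n/\ell)$ weakly dependent light-tailed increments over blocks of a slowly growing size $\ell$, and then to read off both assertions from classical probability: the Efron--Stein inequality for $\Var(G)\lesssim n$, a Hoeffding-type variance bound together with property \ref{P3} for $\Var(G)\gtrsim n$, and a central limit theorem for $1$-dependent sums for the Gaussian limit. I would first dispose of the upper bound via Efron--Stein, $\Var(G(0,0;0,n))\le\tfrac12\sum_{(x,t)\in\H}\bE[(G-G^{(x,t)})^2]$, where $G^{(x,t)}$ resamples the weight $\o(x,t)$. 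Only sites with $0\le t\le n$ contribute; $G$ is nondecreasing and $1$-Lipschitz in each weight, and resampling $\o(x,t)$ moves $G$ only insofar as some path through $(x,t)$ is non-negligibly weighted (resp.\ near-optimal), an event whose quenched probability has an exponential tail in $x$, uniformly over bulk $t$, on a $\P$-event of probability $1-C\exp(-C'x^{1/3})$ --- this uses $\{\pi(t)\ge x\}\subseteq\{\pi\cap\V_{\lb t-x+1,\,t\rb}=\varnothing\}$ together with \cref{thm:pinning} and \cref{rem:reformulating_pinning}. Hence $\bE[(G-G^{(x,t)})^2]\le Ce^{-cx^{1/3}}$, which is summable in $x$, and the double sum is $O(n)$.

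For the lower bound, which is where \ref{P3} enters, I would start from the elementary inequality $\Var(G(0,0;0,n))\ge\sum_{t\ \mathrm{even}}\Var(\bE[G\mid\o(0,t)])$, a consequence of the orthogonality of the Hoeffding decomposition. Fix an even $t$ in the bulk of $\lb0,n\rb$ and set $\Phi_t(v)\coloneqq\bE[G\mid\o(0,t)=v]$; a direct computation shows that $\Phi_t$ is nondecreasing with $\Phi_t'(v)=\bE[q_t(v)]$, where $q_t(v)\in[0,1]$ is the quenched probability that the polymer visits $(0,t)$ when $\o(0,t)$ is set to $v$. Decomposing $Z$ according to whether a path visits $(0,t)$ gives $q_t(v)\le1$, and \cref{thm:pinning} --- via the deformation bending a path crossing $\V$ near time $t$ onto $(0,t)$ at $O(1)$ cost --- gives $q_t(v)\ge e^{-O(1)}$ on a $\P$-event of probability $1-o(1)$ whenever $v$ exceeds a constant $v_0$ depending only on the law of $\o$; hence $\Phi_t'(v)\ge c_1>0$ for $v\ge v_0$. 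Since $\vtwt$ is non-degenerate by \ref{P3}, this forces $\Var(\Phi_t(\o(0,t)))\ge c_2>0$ uniformly in $t$ and $n$, and summing over the $\Theta(n)$ bulk values of $t$ gives $\Var(G(0,0;0,n))\gtrsim n$.

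For the Gaussian limit I would fix slowly growing even scales $r\approx(\log n)^4\ll\ell\approx(\log n)^5$, set $b_m\coloneqq m\ell$ for $0\le m\le n/\ell$, and telescope $G(0,0;0,n)=\o(0,0)+\sum_m S_m$ with $S_m\coloneqq G(0,0;0,b_{m+1})-G(0,0;0,b_m)$. Decomposing $Z(0,0;0,N)$ according to the position $a=\pi(b_m-r)$ of a path at time $b_m-r$ and taking logs gives $F(0,0;0,N)=F(0,0;0,b_m-r)+F(0,b_m-r;0,N)-\o(0,b_m-r)+\log\sum_a p_a q_a^{(N)}$ for $N\in\{b_m,b_{m+1}\}$, where $p_a$ depends only on the weights at times $\le b_m-r$ and $q_a^{(N)}$ only on those at times $\ge b_m-r$ (the last-passage case is identical, with $\log\sum$ replaced by $\max$). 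The terms $F(0,0;0,b_m-r)-\o(0,b_m-r)$ are common to $N=b_m$ and $N=b_{m+1}$ and so cancel in $S_m$, leaving $S_m=\widetilde S_m+e_m$ with $\widetilde S_m\coloneqq G(0,b_m-r;0,b_{m+1})-G(0,b_m-r;0,b_m)$ and $e_m=\log\bigl(\sum_a p_a q_a^{(b_{m+1})}/\sum_a p_a q_a^{(b_m)}\bigr)$. By \cref{thm:pinning} the weights $p_a$ concentrate on $a=O(1)$, and there the ratios $q_a^{(b_{m+1})}/q_a^{(b_m)}$ are $1+O(e^{-cr})$ (the polymer from $(a,b_m-r)$ returns to $\V$ and forgets $a$ well before time $b_m$); so $|e_m|\le e^{-cr}$ on a $\P$-event of probability $1-\exp(-c(\log n)^{4/3})$, which survives a union bound over the $O(n/\ell)$ blocks, and $\sum_m\|e_m\|_2\to0$. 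Now $\widetilde S_m$ is measurable with respect to the weights at times in $\lb b_m-r,\,b_{m+1}\rb$, so $(\widetilde S_m)_m$ is $1$-dependent; each $\widetilde S_m$ has exponential moments, and the two variance bounds above, applied at scale $\ell$, give $\Var(\widetilde S_m)\asymp\ell$. The central limit theorem for $1$-dependent sums with Lindeberg-negligible summands --- equivalently, grouping the $\widetilde S_m$ into super-blocks separated by single buffer blocks of total contribution $o(\sqrt n)$ --- then yields $\bigl(\sum_m\widetilde S_m-\bE[\sum_m\widetilde S_m]\bigr)/\sqrt n\dto\msf N(0,\sigma^2)$ with $\sigma^2>0$ (positivity being guaranteed by the lower bound), and Slutsky's theorem absorbs $\o(0,0)$, the $e_m$, and the normalization $\sqrt{\Var(G(0,0;0,n))}\asymp\sqrt n$.

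The main obstacle is the estimate $|e_m|\le e^{-cr}$ uniform in $m$: converting the quenched bound of \cref{thm:pinning} into the statements that $p_a$ decays exponentially in $a$ and that $q_a^{(b_{m+1})}/q_a^{(b_m)}$ is insensitive to the entry column $a=O(1)$ up to $e^{-cr}$, with errors small enough to survive the union bound over $\Theta(n/\ell)$ blocks, will require care with the choice of scales (so that $e^{-cr}$ and $\exp(-cr^{1/3})$ are both $o(\ell/n)$ while $r=o(\ell)$) and with the bookkeeping of weights shared by consecutive windows; the same localization of free-energy increments underlies the bound $q_t(v)\gtrsim1$ in the lower bound. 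Everything else is a routine reduction to classical limit theorems.
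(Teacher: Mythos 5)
Your variance bounds are essentially sound and close in spirit to the paper's \cref{sec:linear_variance}: the upper bound is Efron--Stein plus pinning (the paper resamples whole rows, you resample single sites --- both work), and your lower bound via the first-order Hoeffding projection $\Var(G)\ge\sum_t\Var(\bE[G\mid\o(0,t)])$ is a homemade version of the Newman--Piza input (\cref{thm:newman-piza}). One fixable subtlety there: \cref{thm:pinning} is a statement about the unconditioned environment, so you cannot quote it after conditioning on $\o(0,t)=v$ to get ``$q_t(v)\ge e^{-O(1)}$ whp for all $v\ge v_0$.'' Instead write $\Phi_t(v_0+1)-\Phi_t(v_0)$ as an expectation over the \emph{other} weights with $\o(0,t)$ replaced deterministically, use that the quenched visiting probability of $(0,t)$ is nondecreasing in $\o(0,t)$ and satisfies $\Q_{\o'}(\pi\ni(0,t))\ge e^{-(\o(0,t)-v_0)_+}\,\Q_{\o}(\pi\ni(0,t))$ when $\o'$ sets the weight at $(0,t)$ to $v_0$, and then apply pinning in the true environment together with the zigzag deformation. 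This is repair work, not a new idea.

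The genuine gap is the estimate $|e_m|\le e^{-cr}$, which is the entire load-bearing step of your CLT argument and is asserted rather than proved. What it requires is exponential memory loss, in the starting column $a$, of the increment $G(a,b_m-r;0,b_{m+1})-G(a,b_m-r;0,b_m)$, i.e.\ that the cross-ratio $q_a^{(b_{m+1})}/q_a^{(b_m)}$ equals $1+O(e^{-cr})$ uniformly over the effective range of $a$, with failure probability small enough to survive a union bound over $\Theta(n/\ell)$ blocks. At zero temperature this does follow from exact geodesic coalescence (this is precisely the alternative route sketched in \cref{rem:alternate_proof_via_bigeodesic}), but at positive temperature there is no exact cancellation, and the comparisons that actually follow from \cref{thm:pinning} and the coupling of \cref{lem:polymer_ordering} --- namely the pigeonhole arguments of \cref{sec:approx_indep} (see the proof of \cref{lem:iid_reduction} and the argument of \cref{sec:near_V}) --- only give \emph{additive} per-block errors of size $\polylog(n)$ (a $\log J$ loss plus local free energies), not multiplicative errors $e^{-cr}$. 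Your choice of scales cannot absorb this: with $n/\ell\asymp n/(\log n)^5$ blocks you need per-block errors of order $o(\sqrt n\,\ell/n)$, i.e.\ polynomially small in $n$, whereas the available bounds are polylogarithmically \emph{large}. This is exactly why the paper works with macroscopic blocks $K\asymp n^{0.9}$, so that only $N\asymp n^{0.1}$ blocks are needed and polylog per-block errors are affordable, and why it engineers \emph{exactly} independent summands (via local highways and the pinned crossing locations $x_i^*$) instead of $1$-dependent approximately-independent ones; the paper also explicitly notes that the stationary/mixing-increment route is clean only at zero temperature. To rescue your scheme you must either prove a genuine quantitative decorrelation estimate (a contraction bound for the quenched hitting measures on $\V$ yielding the $1+O(e^{-cr})$ cross-ratio) --- a nontrivial ingredient not present in the paper --- or enlarge the blocks to $n^{\delta}$ with polylog buffers so that the crude pigeonhole comparison suffices, at which point you have essentially reproduced the paper's proof.
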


\begin{remark}[Pinning suffices for \cref{thm:main}]\label{rem:pinning_suffices_for_clt}
    In our proof of \cref{thm:main}, we only use the LLN separation assumption to access the pinning estimates of \cref{thm:pinning}.
    In particular, our proof of \cref{thm:main} still goes through if the LLN separation assumption is replaced with the (a priori weaker) assumption that the polymer is pinned.
\end{remark}

\begin{remark}[Conjectural equivalence of LLN separation and bound phase phenomena]\label{rem:equivalence}
    We expect that LLN separation is in fact \emph{equivalent} to the pinning of the polymer to $\V$ and the conclusions of \cref{thm:main}.
    Indeed, suppose that the polymer is pinned.
    As indicated in \cref{rem:pinning_suffices_for_clt}, the proof of \cref{thm:main} allows to deduce that the free energy $G=G(0,0;0,n)$ converges to a Gaussian in the diffusive scaling limit.
    In particular, for any $M>0$ there exists $c=c(M)>0$ such that
    \begin{equation}\label{eq:fluc1}
        \P(G - gn \le -M\sqrt{n})
        \ge c\quad\text{for all sufficiently large } n.
    \end{equation}
    As alluded to before, \cite{BRAlgebraicAspectsIncreasing2001,BRAsymptoticsMonotoneSubsequences2001,BRSymmetrizedRandomPermutations2001,SIFluctuationsOneDimensionalPolynuclear2004, BBCSFacilitatedExclusionProcess2018,BBCSPfaffianSchurProcesses2018} proved that for integrable LPP models, $G^\bk=L^\bk$ exhibits $\Theta(n^{1/3})$ fluctuations.
    This was recently extended to positive temperature in the works \cite{IMSSolvableModelsKPZ2022,BCDKPZExponentsHalfspace2023} on the half-space log-Gamma (HSLG) polymer.    
    In particular, for these integrable models we have that
    \begin{equation}\label{eq:fluc2}
        \frac{G^\bk - g^\bk\, n}{\sqrt n} \pto 0\quad\text{as } n\to\infty.
    \end{equation}
    Recalling the identity \eqref{eq:llnavoid2}, we see that \eqref{eq:fluc1} and \eqref{eq:fluc2} together imply that $g>g^\bk$.

    The $\Theta(n^{1/3})$ fluctuations of $G^\bk$ are predicted to be universal, but a proof of this in non-integrable settings is far out of reach.
    However, for the equivalence of LLN separation and pinning, it suffices to establish \eqref{eq:fluc2}, i.e. $o(\sqrt n)$ fluctuations.
    A natural approach towards this more modest goal is to adopt the strategy pioneered by Benjamini--Kalai--Schramm \cite{BKSFirstPassagePercolation2003} in the setting of first passage percolation, where they proved sublinear variance growth by combining an innovative averaging argument with powerful hypercontractive inequalities (cf. \cref{rem:variance_for_kpz_models}).
    We leave a detailed analysis in this direction for future work.
\end{remark}

\begin{remark}[Extending to other environments]\label{rem:extend_to_hslg}
    Our arguments are robust and can be used to extend \cref{thm:pinning,thm:main} to polymer models with real-valued weights whose lower tails exhibit sufficiently rapid decay, as opposed to only positive weights as stipulated in \cref{sec:model}\ref{P1} (cf. \cref{rem:optimal-exponents}).
    This in particular includes the HSLG polymer.
    As a consequence, one can combine the LLN separation proved for the HSLG polymer in \cite{IMSSolvableModelsKPZ2022} with our methods to reprove the results of \cite{IMSSolvableModelsKPZ2022,DZHalfspaceLoggammaPolymer2024} on bound phase phenomena in the HSLG polymer.
    We did not pursue this in the present paper for simplicity: requiring the weights to be positive ensures that all free energies are positive, which simplifies a number of arguments that involve comparing free energies associated to different pairs of endpoints (see e.g. \cref{sec:pinning}).
\end{remark}

In addition to proving the pinning of the HSLG polymer, \cite[Theorem 1.7]{DZHalfspaceLoggammaPolymer2024} extended \cite{IMSSolvableModelsKPZ2022} by showing that the HSLG polymer free energy $F_{\mathrm{HSLG}}(0,0;y_n,n)$ has a Gaussian scaling limit in the bound phase for any sequence $y_n=o(\sqrt n)$.
In particular they leveraged their machinery to establish the approximation 
\begin{equation}\label{eq:HSLGcomp}
    \frac{1}{\sqrt n}\bigl|F_{\mathrm{HSLG}}(0,0;0,n) - F_{\mathrm{HSLG}}(0,0;y_n,n)\bigr| \pto  0,
\end{equation}
and then used the limiting Gaussianity of $F_{\mathrm{HSLG}}(0,0;0,n)$ proved in \cite[Theorem 6.9]{IMSSolvableModelsKPZ2022}.
It turns out that \eqref{eq:HSLGcomp} can be also established for polymer models exhibiting LLN separation via a quick application of our methods.
Our result to this effect is recorded as the following corollary, which for consistency we have formulated in the same manner as \cite[Theorem 1.7]{DZHalfspaceLoggammaPolymer2024}.
\begin{corollary}\label{cor:near_V}
    Fix $G\in\{F,L\}$ and suppose the polymer model has LLN separation.
    Fix an integer $k\ge 1$ and sequences of positive even integers $(y_{1,n})_{n\ge 0},\dots,(y_{k,n})_{n\ge 0}$ satisfying $y_{j,n}=o(\sqrt n)$ for all $j\in\lb 1,k\rb$.
    Then 
    \[
        \Var(G(0,0;y_{j,n},n))\asymp n\quad\text{for all } j\in\lb1,k\rb,
    \]
    where the implicit constants depend only on the sequences $(y_{j,n})_{n\ge0}$ and the law of $\o$.
    Moreover, if we fix a standard Gaussian random variable $\msf{z}\sim \msf{N}(0,1)$, then
    \[
        \left(
            \frac{G(0,0;y_{j,n},n)-\bE[G(0,0;y_{j,n},n)]}{\sqrt{\Var(G(0,0;y_{j,n},n))}}
        \right)_{j\in\lb 1,k\rb}
        \dto 
        (\msf{z},\dots,\msf{z})
        \quad\text{as } n\to\infty.
    \]
\end{corollary}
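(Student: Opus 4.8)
Fix $G\in\{F,L\}$ with matching $g\in\{f,\ell\}$. The plan is to reduce \cref{cor:near_V} to showing, for each fixed $j$, that
\[
    \E\bigl[\bigl(G(0,0;y_{j,n},n)-G(0,0;0,n)\bigr)^2\bigr]=o(n),
\]
after which the corollary follows from \cref{thm:main}. Indeed, writing $G_j\coloneqq G(0,0;y_{j,n},n)$, $G_0\coloneqq G(0,0;0,n)$ and $R_j\coloneqq G_j-G_0$, the above estimate gives $\Var(G_j)=\Var(G_0)+o(n)$ by Cauchy--Schwarz, so $\Var(G_j)\asymp n$ since $\Var(G_0)\asymp n$; this is the first assertion of the corollary. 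Moreover $R_j/\sqrt n\to 0$ in $L^1$, hence in probability, while $\Var(G_0)/\Var(G_j)\to 1$, so $\bigl(G_j-\E[G_j]\bigr)/\sqrt{\Var(G_j)}=\bigl(G_0-\E[G_0]\bigr)/\sqrt{\Var(G_0)}+o_\P(1)$ for each $j$; the vector of these quantities therefore equals $\bigl(G_0-\E[G_0]\bigr)/\sqrt{\Var(G_0)}\cdot(1,\dots,1)+o_\P(1)$, which converges to $(\msf z,\dots,\msf z)$ by the limiting Gaussianity in \cref{thm:main} and Slutsky's lemma.

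To prove the $L^2$-estimate, fix $j$, abbreviate $y\coloneqq y_{j,n}$, pick an even integer $h_n\asymp\log^6 n$, put $\ell_n\coloneqq y+h_n=o(\sqrt n)$, and take the window $W\coloneqq\lb n-y-h_n,\,n-y-1\rb$, contained for large $n$ in both $\lb 1,n-1\rb$ and $\lb 1,n-y-1\rb$. Let $\mc G_n$ be the event on which the pinning bound of \cref{thm:pinning} holds, relative to $\V_W$, for the polymer measures $\Q^{(0,0;\,0,n)}$ and $\Q^{(0,0;\,y,n)}$ (in the case $G=L$, for the corresponding leftmost geodesics), and on which $\max_{(0,s)\in\V_W}\o(0,s)\le K\log n$ for a suitable constant $K$. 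Since $h_n\asymp\log^6 n$, the $\exp(-c\,h_n^{1/3})$-type errors of \cref{thm:pinning} decay faster than any power of $n$, so $\P(\mc G_n^c)=o(n^{-M})$ for every fixed $M$. On $\mc G_n$, splitting a path according to its last visit to $\V_W$ (and discarding the negligible polymer mass of paths avoiding $\V_W$) yields, for each endpoint $z\in\{0,y\}$,
\[
    \Bigl|\,G(0,0;z,n)-\max_{s}\bigl[G(0,0;0,s)+G(0,s;z,n)\bigr]\,\Bigr|\le C\log n,
\]
the maximum over even $s\in W$ (for odd $s$ the first summand is $-\infty$). Subtracting the cases $z=y$ and $z=0$, using the triangle inequality for maxima together with the nonnegativity of all free energies (a consequence of the positivity of the weights), we get on $\mc G_n$ that $\bigl|G(0,0;y,n)-G(0,0;0,n)\bigr|\le C\log n+\max_{s\in W}G(0,s;y,n)+\max_{s\in W}G(0,s;0,n)$.

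It remains to control the two ``endpoint tails''. Positivity of the weights makes the free energy monotone under appending steps, so $s\mapsto G(0,s;0,n)$ is nonincreasing, giving $\max_{s\in W}G(0,s;0,n)\le G(0,\,n-y-h_n;\,0,n)$, which is equal in distribution to $G(0,0;0,\ell_n)$ by translation invariance of the environment. For the other tail, appending $y$ forced leftward steps to a path $(0,s)\to(y,n)$ produces a path $(0,s)\to(0,n+y)$ of no smaller weight, so $G(0,s;y,n)\le G(0,s;0,n+y)$; this is again nonincreasing in $s$, whence $\max_{s\in W}G(0,s;y,n)\le G(0,\,n-y-h_n;\,0,n+y)\overset{d}{=}G(0,0;0,2y+h_n)$. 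Since $\ell_n$ and $2y+h_n$ are both $o(\sqrt n)$, superadditivity gives $\E[G(0,0;0,m)]\le gm=o(\sqrt n)$ for $m\in\{\ell_n,\,2y+h_n\}$, while the standard a priori concentration estimates for the free energy valid under \cref{sec:model}\ref{P2} give $\E[G(0,0;0,m)^q]=O(m^q)$ for each fixed $q\ge 1$. Thus on $\mc G_n$ the difference $|G(0,0;y,n)-G(0,0;0,n)|$ is dominated by a random variable of $L^2$-norm $o(\sqrt n)$; off $\mc G_n$ it is dominated deterministically by $2\,G(0,0;0,n+y)$ (by the leftward-completion surgery and monotonicity again), whose $L^2$-contribution is $o(\sqrt n)$ by Cauchy--Schwarz since $\P(\mc G_n^c)$ decays super-polynomially. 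Combining, $\E[(G(0,0;y,n)-G(0,0;0,n))^2]=o(n)$, as required.

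The delicate step is the third paragraph. Because $y_{j,n}$ is only assumed to be $o(\sqrt n)$ and so may grow almost as fast as $\sqrt n$, the free energies $G(0,s;y,n)$ appearing in the endpoint tails cannot be controlled by crude path-counting or last-passage bounds, which would cost factors of $\log n$ and fail when $y_{j,n}\asymp\sqrt n/\polylog(n)$. The leftward-completion surgery $G(0,s;y,n)\le G(0,s;0,n+y)$ circumvents this by reducing to a free energy based on $\V$ over a comparably short interval, for which the sharp mean bound $\E[G]\le g\cdot(\text{length})$ follows from superadditivity; this is the one point at which the positivity of the weights (\cref{sec:model}\ref{P1}) is genuinely used. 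A secondary issue is the calibration of $h_n$: it must be polylogarithmically large so that the pinning errors outpace the polynomial moments of $G$ appearing in the off-$\mc G_n$ estimate, yet small enough that $\ell_n=y_{j,n}+h_n$ stays $o(\sqrt n)$.
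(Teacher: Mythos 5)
Your proof is correct, but it takes a genuinely different route from the paper. The paper deduces \cref{cor:near_V} from an in-probability bound $n^{-1/2}|G(0,0;0,n)-G(0,0;y_n,n)|\pto 0$, proved by coupling the two polymers (pinning of $\pi^y$ in a window just below height $n-y_n$, planarity forcing intersection, then the pigeonhole/coalescence decomposition already developed for \cref{lem:iid_reduction}), and it establishes $\Var(G(0,0;y_n,n))\asymp n$ separately, by rerunning the Efron--Stein bound verbatim and adapting the Newman--Piza argument through a coupling with $\Q^{(0,0;\,0,n)}$. You instead prove the stronger $L^2$ estimate $\E[(G(0,0;y_n,n)-G(0,0;0,n))^2]=o(n)$ directly: a ``last visit to $\V_W$'' decomposition over a polylog window $W$ just below height $n-y_n$ (justified by \cref{thm:pinning}), a triangle inequality for the two maxima, and then path-surgery monotonicity (prepending/appending staircases, using positivity of the weights) to reduce the boundary terms to $G(0,0;0,m)$ with $m=o(\sqrt n)$, controlled by the LLN-scale mean bound from superadditivity plus a second-moment bound. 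This buys you two things: the variance asymptotics for the shifted endpoint come for free from $\Var(G(0,0;0,n))\asymp n$ (no second Newman--Piza argument), and your surgery-plus-LLN bound genuinely handles the regime $y_n$ close to $\sqrt n$, where crude path-counting/row-maximum bounds of the type the paper invokes at the analogous step would pick up extra $\polylog$ factors; your closing remark on this point is well taken. The paper's route is shorter given that the Section 5 machinery is already in place, and it avoids any moment estimates beyond what is proved there.

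Two small points to tighten. First, the input $\E[G(0,0;0,m)^q]=O(m^q)$ is invoked as standard; for the argument you only need $q=2$ on the good event (which follows within the paper from superadditivity for the mean together with \cref{lem:varatmostlinear}, or from a Lemma~\ref{lem:large_dev}-type concentration bound as in \cref{rem:suboptimal_large_dev}, both valid here since LLN separation is assumed) and some fixed polynomial moment of $G(0,0;0,n+y)$ off the good event, which is immediate from summing the box weights; it would be cleaner to cite these explicitly. Second, with a fixed constant $K$ the event $\max_{(0,s)\in\V_W}\o(0,s)\le K\log n$ fails only with polynomially small probability (of order $n^{-zK}$), not super-polynomially; this is harmless since $K$ may be chosen large relative to the fixed polynomial moment used in the Cauchy--Schwarz step (or one can raise the threshold to $(\log n)^2$ to get genuinely super-polynomial decay), but the statement as written is slightly overstated.
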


\begin{remark}[Unbounded support assumption]\label{rem:unbounded-support}
    Recall from \cref{sec:model}\ref{P3} that we assume the weights $\bkwt,\vtwt$ have unbounded supports.
    We only invoke this assumption while proving the lower bounds $\Var(G) \gs n$ asserted in \cref{thm:main,cor:near_V}; our proofs of all other results apply equally if $\bkwt$ and/or $\vtwt$ is bounded.
\end{remark}

\subsection{Idea of proof}\label{sec:outline}

We now outline our proofs of \cref{thm:pinning,thm:main}.
Our arguments will apply simultaneously to $F$ and $L$, but for concreteness we typically focus on $F$ throughout the paper (cf. \cref{sec:pos-zero}).

In \cref{sec:pinning} we prove \cref{thm:pinning} by combining LLN separation with a large deviations estimate for the free energy (\cref{lem:large_dev}), as alluded to in \cref{rem:optimal-exponents}.

The linear growth $\Var(F(0,0;0,n))\asymp n$ is the subject of \cref{sec:linear_variance}.
We prove the lower bound $\Var(F(0,0;0,n))\gs n$ by combining the pinning established in \cref{thm:pinning} with a general resampling-based estimate due to Newman--Piza \cite{NPDivergenceShapeFluctuations1995}.
For the upper bound $\Var(F(0,0;0,n))\ls n$, we apply the Efron--Stein inequality (recorded below as \cref{thm:efron-stein}).

\begin{remark}[Variance bounds for KPZ class models]\label{rem:variance_for_kpz_models}
To illustrate the relevance of pinning to our variance estimates, we note that \cite{NPDivergenceShapeFluctuations1995} and the Efron--Stein inequality are known to yield suboptimal variance bounds for KPZ universality class growth models.
For example, let $T_n$ be the full-space first passage time $(0,0)\to(0,n)$.
Newman--Piza \cite{NPDivergenceShapeFluctuations1995} used their framework to prove the lower bound $\Var(T_n)\gs \log n$, and Kesten \cite{KesSpeedConvergenceFirstPassage1993} showed that $\Var(T_n)\ls n$ via a martingale estimate analogous to the Efron--Stein inequality (see also the proof of \cite[Theorem 3.1]{ADH50YearsFirstpassage2017}).
These results are breakthroughs, but neither is sharp: it is predicted that $\Var(T_n)\asymp n^{2/3}$.
The fact that these methods yield sharp estimates in our setting can therefore be interpreted as a further manifestation of bound phase phenomena.
\end{remark}

We now discuss the proof of the Gaussian convergence in \cref{thm:main}.
In \cref{sec:approx_indep} we combine \cref{thm:pinning} with coalescence phenomena to establish a sort of ``decay of correlation" for the polymer.
The idea is as follows.
Fix $j,J$ satisfying
\footnote{We denote by $\polylog(n)$ an arbitrary (but fixed) function of the form $(\log n)^C$, for $C>0$ a constant.}
\[
    j=o(J),\quad j\uparrow\infty,\quad\text{and}\quad J\asymp \polylog(n).
\]
Consider the horizontal segments
\[
    \mc S_0\coloneqq \lb 0, j\rb \times\{s\},\qquad \mc T_0\coloneqq \lb 0,j\rb\times\{s+J\}.
\]
We say that the polymer $\pi\sim\Q^{(0,0;\,0,n)}$ is \emph{constrained} if it hits $\mc S_0$ and $\mc T_0$ (\cref{fig:outline}, left).
\cref{thm:pinning} implies that 
with high $\P$-probability, $\pi$ is typically constrained.
Also, we say that the polymer $\g_0\sim\Q^{(j,s;\,j,s+J)}$ is a \emph{local highway} if it begins its journey by moving quickly towards $\V$ to collect a vertical weight at height $\Theta(j+s)$, and concludes its journey by collecting a vertical weight at height $\Theta(s+J-j)$ before turning away from $\V$ towards its endpoint $(j,s+J)$ (\cref{fig:outline}, left).
By \cref{thm:pinning}, with high $\P$-probability, $\g_0$ is typically a local highway.
To explain the name ``local highway,'' we need to first describe how we use local highways to control $\pi$.

Suppose that $\pi$ is constrained and that $\g_0$ is a local highway.
It follows from the planarity of the model that $\pi$ intersects $\g_0$ a short time after passing through $\mc S_0$.
Similarly, $\pi$ also intersects $\g_0$ a short time before reaching $\mc T_0$.
Denote by $a,b$ the intersection points just described.
One can show that $\pi$ and $\g_0$ share the same conditional law (namely $\Q^{a;\,b}$)  given their respective trajectories below $a$ and above $b$.
Therefore by replacing the segment of $\pi$ lying between $a$ and $b$ with that of $\g_0$, we can couple $\pi,\g_0$ so that the two polymers coincide between $a$ and $b$ (\cref{fig:outline}, left).
This phenomenon, known in the literature as \emph{coalescence}, is discussed in detail in \cref{sec:polymer_ordering}.
The name ``local highway'' is intended to evoke the fact that the constrained polymer $\pi$ must ``merge" (coalesce) with $\g_0$ on the ``local" scale $J=o(\sqrt n)$.

\begin{figure}[t]
    \centering
    \begin{tikzpicture}
        \begin{scope}[local bounding box=scope1]
            \fill[black!15!white] (0,4.5) rectangle (2.25,5.2);

            \draw[color=gray, very thick, dotted](0,0.5)--(0.75,0.5)node[below]{$\mc S_0$};
            \draw[color=gray, very thick, dotted](0,4.5)--(0.75,4.5)node[above]{$\mc T_0$};
            \draw[color=black, ultra thick] (0,0)--(0,5.2);
        
            \draw[color={DarkOrange1}, ultra thick]
            (0.3,-0.4)
            node[below right]{$\pi$}
            to[out=95,in=-90] (0.1,0.1)
            to[out=120,in=-90] (0,0.2)
            to[out=70,in=-90] (0.1,0.35)
            to[out=70,in=-110] (0.2,0.6)
            to[out=110,in=-30] (0,0.7)
            to[out=80,in=-80] (0.2,1.1);
        
            \draw[color={DarkOrange1},ultra thick]
            (0.3,3.85)
            to[out=100,in=-60] (0.1,4.2)
            to[out=130,in=-70] (0,4.3)
            to[out=80,in=-120] (0.1,4.5)
            to[out=45,in=-90] (0.2,4.6)
            to[out=100,in=-60] (0,4.9)
            to[out=80,in=-130] (0.1,5)
            to[out=45,in=-80]
            (0.2,5.2);
        
            \draw[blue,ultra thick]
            (0.75,0.5)
            to[out=135,in=-45] (0.6, 0.8)
            to[out=100,in=-70] (0.2,1.1)
            to[out=120,in=-45] (0,1.3)
            to[out=90,in=230] ++(0.2,0.25) 
            to[out=80,in=-80] ++(-0.2,0.3) 
            to[out=90,in=-100] ++(0.1,0.15) 
            to[out=90,in=-100] ++(-0.1,0.1) 
            to[out=80,in=-100] ++(0.3,0.25) node[right]{$\g_0$}
            to[out=80,in=-100] ++(-0.1,0.15) 
            to[out=80,in=-100] ++(0.2,0.25) 
            to[out=80,in=-100] ++(-0.15,0.1)
            to[out=110,in=-80] ++(-0.05,0.1) 
            to[out=80,in=-100] ++(-0.2,0.15) 
            to[out=80,in=-100] ++(0.15,0.15)
            to[out=80,in=-100] ++(-0.15,0.2) 
            to[out=80,in=-100] ++(0.3,0.4)
            to[out=75,in=210] (0.75,4.5);
        
            \draw[blue, fill] (0.75,0.5) circle [radius = 0.07];
            \draw [blue, fill] (0.75,4.5) circle [radius = 0.07];
            \draw [color={DarkOrange1}, fill] (0.2,1.1) circle [radius = 0.07]
            node[above right]{$a$};
            \draw [color={DarkOrange1}, fill] (0.3,3.85) circle [radius = 0.07]node[below right]{$b$};
        \end{scope}
        \begin{scope}[shift={(6.5,0.1)}]
            \fill[black!15!white] (0,0.25) rectangle (2.5,2);
            \fill[black!15!white] (0,2.5) rectangle (2.5,4.25);

            \draw[color=gray, thick, |<->|](-.3,2.5)--(-.3,4.25) node[midway, left]{$K$};
    
            \draw[color=gray,thick,dotted] (0,-0.25)--(0.3,-0.25);
            \draw[color=gray,thick,dotted] (0,0.25)--(0.3,0.25);
            \draw[color=gray,thick,dotted] (0,2)--(0.3,2);
            \draw[color=gray,thick,dotted] (0,2.5)--(0.3,2.5);
            \draw[color=gray,thick,dotted] (0,4.25)--(0.3,4.25);
            \draw[color=gray,thick,dotted] (0,4.75)--(0.3,4.75);

            \draw[color=black, very thick] (0,-0.5)--(0,5) node[left]{$\V$};

            \draw[color=blue, very thick] (0.3,-0.25)
            to[out=120,in=-80] ++(-0.3,0.15)
            to[out=80,in=-100] ++(0.1,0.1)
            to[out=100,in=-80] ++(-0.1,0.1) 
            to[out=80,in=-120] (0.3,0.25)node[below right]{$\g_0$};

            \draw[color=blue, very thick] (0.3,2)
            to[out=120,in=-80] ++(-0.3,0.15)
            to[out=80,in=-100] ++(0.1,0.1)
            to[out=100,in=-80] ++(-0.1,0.1) 
            to[out=80,in=-120] (0.3,2.5)node[below right]{$\g_1$};

            \draw[color=blue, very thick] (0.3,4.25)
            to[out=120,in=-80] ++(-0.3,0.15)
            to[out=80,in=-100] ++(0.1,0.1)
            to[out=100,in=-80] ++(-0.1,0.1) 
            to[out=80,in=-120] (0.3,4.75)node[below right]{$\g_2$};

            \draw[color={DarkOrange1}, very thick] (0.2,-0.8)
            node[right]{$\pi$}
            to[out=120,in=-80] (0.1,-0.16);

            \draw[color={DarkOrange1}, very thick] (0.07,0.15)
            to[out=90,in=-90] ++(0.1,0.25) 
            to[out=80,in=-80] ++(-0.17,0.3) 
            to[out=90,in=-100] ++(0.1,0.15) 
            to[out=90,in=-100] ++(-0.1,0.1) 
            to[out=80,in=-100] ++(0.2,0.25)
            to[out=80,in=-100] ++(-0.15,0.1)
            to[out=110,in=-80] ++(-0.05,0.1) 
            to[out=80,in=-100] ++(0.15,0.15)
            to[out=80,in=-100] ++(-0.15,0.2) 
            to[out=80,in=-100] ++(0.2,0.2)
            to[out=100,in=-90] (0.1,2.05);

            \draw[color={DarkOrange1}, very thick] (0.1,2.4)
            to[out=90,in=230] ++(0.1,0.25) 
            to[out=80,in=-80] ++(-0.2,0.3) 
            to[out=90,in=-100] ++(0.1,0.15) 
            to[out=90,in=-100] ++(-0.1,0.1) 
            to[out=80,in=-100] ++(0.2,0.25)
            to[out=80,in=-100] ++(-0.2,0.1)
            to[out=80,in=-100] ++(0.15,0.15)
            to[out=80,in=-100] ++(-0.15,0.2) 
            to[out=80,in=-100] ++(0.2,0.2)
            to[out=100,in=-90] (0,4.4);

            \draw[color={DarkOrange1},very thick] (0.1,4.65) to[out=60,in=-80] (0.1,5);

            \draw [color={DarkOrange1}, fill] (0.1,-0.18) circle [radius = 0.05];
            \draw [color={DarkOrange1}, fill] (0.07,0.16) circle [radius = 0.05];
            \draw [color={DarkOrange1}, fill] (0.1,2.07) circle [radius = 0.05];
            \draw [color={DarkOrange1}, fill] (0.1,2.42) circle [radius = 0.05];
            \draw [color={DarkOrange1}, fill] (0,4.4) circle [radius = 0.05];
            \draw [color={DarkOrange1}, fill] (0.1,4.67) circle [radius = 0.05];

            \draw[blue, fill] (0.3,-0.25) circle [radius = 0.03];
            \draw[blue, fill] (0.3,0.25) circle [radius = 0.03];
            \draw[blue, fill] (0.3,2) circle [radius = 0.03];
            \draw[blue, fill] (0.3,2.5) circle [radius = 0.03];
            \draw[blue, fill] (0.3,4.25) circle [radius = 0.03];
            \draw[blue, fill] (0.3,4.75) circle [radius = 0.03];

            \node (A) at (-0.1,-0.9) {};
            \node (B) at (.75,0.25) {};
        \end{scope}

        \node [corners={([shift={(-.4,-.1)}]scope1.south west) (scope1.north east)}] (box1) {};

        \node [corners={(A) (B)}] (box2) {};

        \draw[color=black, densely dotted]
        (box1.north east)--(box2.north west);
        \draw[color=black, densely dotted]
        (box1.south east)--(box2.south west);
    \end{tikzpicture}

    \caption{\textbf{Left:} The blue path $\g_0$ is a local highway.
    The polymer $\pi$ (orange) is constrained, and hits $\g_0$ for the first time at $a$ and for the last time at $b$.
    Its trajectory between $a$ and $b$ has been replaced by that of $\g_0$.
    In accordance with our assumption $j=o(J)$, we have depicted $\mc S_0,\mc T_0$ (dotted gray line segments) as being much shorter than the vertical distance between them.
    \textbf{Right:} Three local highways $\g_0,\g_1,\g_2$ (blue) are separated by a vertical distance of $K$.
    The three corresponding polymer measures are each determined by the weights in a different $J\times J$ box (not drawn).
    As $J=o(K)$, these boxes are disjoint and therefore the three polymer measures are $\P$-independent.
    Then since $\pi$ (orange) coalesces with each $\g_i$, it follows that the laws of the segments of $\pi$ within the shaded gray horizontal strips are $\P$-independent.}
    \label{fig:outline}
\end{figure}

We now fix a parameter $K\asymp n^C$ for some constant $C\in(\frac12,1)$.
We consider vertical translates of the above construction: for $i\in\{1,2\}$, we define segments 
\[
    \mc S_i\coloneqq \mc S_0 + (0,iK),\qquad \mc T_i\coloneqq \mc T_0+(0,iK)
\]
and paths
\[
    \g_i \sim \Q^{(j,\,s+iK;\;j,\,s+iK+J)}.
\]
A union bound shows that with high $\P$-probability, the polymer $\pi$ is typically constrained (i.e. $\pi$ hits each $\mc S_i,\mc T_i$), and each $\g_i$ is typically a local highway (\cref{fig:outline}, right).
Consider boxes
\[
    \lb 0,\,J\rb\times\lb s+iK,\,s+iK+J\rb,\quad i\in\{0,1,2\}.
\]
As $J=o(K)$, these boxes are well-separated from each other.
Suppose that $\pi$ is constrained and that every $\g_i$ is a local highway.
As we have seen above, it follows that $\pi$ coalesces with each $\g_i$.
On the other hand, for $i\ne i'$, the weights inside the $i$-th box and the weights inside the $i'$-th box are independent.
Since the law of each $\g_i$ depends only on the environment $\o$ within the $i$-th box, it follows that the segment of $\pi$ between $\mc T_0$ and $\mc S_1$, and the segment of $\pi$ between $\mc T_1$ and $\mc S_2$, are independent with respect to $\P$ 
(\cref{fig:outline}, right).

In \cref{sec:approx_indep} we make this precise, and extend it to hold simultaneously for $(n/K)$-many pairs $\mc S_i,\mc T_i$ and local highways $\g_i$.
The behavior of $\pi$ inside the $i$-th box (of area $J^2\asymp\polylog(n)$) has a negligible effect on the diffusively-scaled free energy.
This leads to an approximation of the form
\[
    \frac{F(0,0;0,n)-\bE[F(0,0;0,n)]}{\sqrt n} \approx 
    \frac{1}{\sqrt{n/K}} \sum_{i=1}^{n/K}\frac{F_i-\bE[F_i]}{\sqrt K},
\]
where the $F_i$ are \emph{independent} random variables, each being (approximately) the contribution made to $F(0,0;0,n)$ by the polymer $\pi$ during its journey from $\mc T_i$ to $\mc S_{i+1}$.

Finally, since $n/K\to\infty$, we have reduced the problem to that of verifying the hypotheses of the classical Lindeberg central limit theorem for a diffusively-scaled sum of $(n/K)$-many independent random variables $X_i\coloneqq \frac{F_i-\bE[F_i]}{\sqrt K}$ with variance $\Var(X_i)\asymp 1$ (the aforementioned variance estimates imply $\Var(F_i)\asymp K$).
We show that the $X_i$ satisfy the Lindeberg condition in  \cref{sec:lindeberg} via a straightforward martingale concentration argument that does not depend on polymer pinning.

\begin{remark}[Alternative proof of Gaussian fluctuations at zero temperature]\label{rem:alternate_proof_via_bigeodesic}
    It turns out that geodesic pinning can be used to provide a much cleaner proof of \cref{thm:main} for the LPP model $G=L$ than what we outlined above.
    We sketch this now.

    Assuming LLN separation, \cref{thm:pinning} implies that the left-most geodesic $\G:(0,-m)\to(0,n)$ typically hits $\V$ at $\Theta(n+m)$-many locations.
    For $i\in\Z$, let $H_i$ be the energy that $\G$ accrues during its journey between the pair of consecutive hitting locations straddling the horizontal line $\Z\times\{i\}$.
    Using that $|\G\cap\V|=\Theta(n+m)$, one can adapt (and substantially simplify) the coalescence arguments of \cref{sec:approx_indep} to prove that the correlation between $H_i$ and $H_j$ decays as a stretched exponential in $|i-j|$.
    A further application of coalescence allows to take the limit $n,m\to\infty$, yielding a sequence $(\mf{H}_i)_{i\in\Z}$.
    One can show that $(\mf{H}_i)_{i\in\Z}$ inherits a stretched exponential rate of mixing from the prelimit, and that $(\mf{H}_i)_{i\in\Z}$ is stationary as a result of the vertical translation-invariance of the environment.
    One can further show that, under diffusive scaling, the last passage time $L(0,-m;0,n)$ is approximated by a sum of the $\mf{H}_i$.
    The asymptotic Gaussianity then follows from classical results on the central limit theorem for stationary mixing sequences (e.g. \cite{ILIndependentStationarySequences1971,BolCentralLimitTheorem1982}).
    That $\Var(L(0,-m;0,n))=\Theta(n+m)$ can be proved as in \cref{sec:linear_variance}, but with some simplifications owing to the stretched exponential mixing.

    A remark to this effect previously appeared in \cite{BSSLastPassagePercolation2016}, where the authors resolved the famous slow bond problem by establishing LLN separation for a full-space LPP model with reinforced weights on a line.
    They further observed that LLN separation implies the pinning of the geodesic to the line, and explained how this can be used to construct the process $(\mf{H}_i)_{i\in\Z}$ and deduce the Gaussian fluctuations of the last passage time.
    Subsequently, Basu--Sarkar--Sly \cite{BSSInvariantMeasuresTASEP2017} used geometric arguments to resolve several outstanding conjectures of Liggett related to the slow bond problem.
    In the course of their analysis they proved the pinning of the geodesic, and as a corollary provided the details of the argument of \cite{BSSLastPassagePercolation2016} for the Gaussian fluctuations (see \cite[Appendix B]{BSSInvariantMeasuresTASEP2017}).

    Unlike the last passage time, the positive temperature free energy depends on every path and consequently cannot be analyzed using only the elegant theory of stationary sequences.
    We therefore take a mesoscopic approach that allows to treat the zero and positive temperature models in a unified manner.
\end{remark}

\subsection{Organization of the paper}
In \cref{sec:notation} we record some notation.
The remainder of \cref{sec:preliminaries} is spent collecting general results on the directed polymer model: a large deviations estimate in \cref{sec:lln}, LLN comparisons in \cref{sec:LLN_comparisons}, the phenomena of polymer ordering and coalescence in \cref{sec:polymer_ordering}, and the correspondence between the positive temperature and zero temperature model in \cref{sec:pos-zero}.
In \cref{sec:pinning} we prove \cref{thm:pinning}.
In \cref{sec:linear_variance} we prove that the free energy $(0,0)\to(0,n)$ has variance $\Theta(n)$.
In \cref{sec:approx_indep} we construct independent random variables whose sum approximates the diffusively-scaled free energy.
In \cref{sec:near_V} we prove \cref{cor:near_V}.
In \cref{sec:lindeberg} we verify the Lindeberg condition for the random variables constructed in \cref{sec:approx_indep}, thereby completing the proof of \cref{thm:main}.

\subsection{Acknowledgements} 
I thank my advisor Shirshendu Ganguly for suggesting this problem, for numerous helpful discussions and insights, and for many comments on earlier drafts of this paper.
I also thank the two anonymous referees for their meticulous feedback that greatly improved the presentation of the paper, and for their interesting questions and suggestions, one of which led to \cref{rem:optimal-exponents}.
This work was supported by the National Science Foundation Graduate Research Fellowship Program under Grant No. DGE-2146752.


\section{Preliminaries}\label{sec:preliminaries}

\subsection{Notation}\label{sec:notation}
We denote by $C,C',C'',C'''$ deterministic, strictly positive constants whose values may change from line to line (or in the same line), and which may depend on the law of the environment $\o$, but not on any other parameters (such as $n$).

We follow the usual Landau asymptotic notation: we write $A=O(B)$ if $|A|\le CB$ for some $C>0$, and $A=\Theta(B)$ if $A=O(B)$ and $B=O(A)$.
We will frequently write $A\ls B$ instead of $A=O(B)$, $A\gs B$ instead of $B=O(A)$, and $A\asymp B$ instead of $A=\Theta(B)$.
Lastly, we write $A=o(1)$ if $\lim_{n\to\infty}A=0$, and $A=o(B)$ if $\frac{A}{B}=o(1)$.


\subsection{Large deviations}\label{sec:lln}
In this section we prove a large deviations estimate for the free energy.
For $G\in\{F,L\}$ we define
\begin{equation}\label{eq:gap}
    \gap=\gap(G)\coloneqq \frac{g-g^\bk}{5},
\end{equation}
so that LLN separation \eqref{eq:DEF_lln_separation} is equivalent to $\gap>0$.

\begin{lemma}[Free energy large deviations]\label{lem:large_dev}
    There exist constants $C,C',c_0>0$ depending only on the law of $\o$ such that the following holds.
    Fix $G\in\{F,L\}$ and suppose the polymer model has LLN separation.
    Then for all $u=(x_1,t_1), v=(x_2,t_2)\in\H$ with vertical displacement $t_2-t_1\ge c_0$ and $\Pi(u;v)\ne\varnothing$, we have that 
    \begin{equation}\label{eq:conc_vert}
        \P\left(
            \bigl|G(u;v)-\bE[G(u;v)]\bigr| > \gap |t_2-t_1|
        \right)
        \le C\exp\left(-C'\min\{1,\gap^2\} \,|t_2-t_1|^{1/3}\right)
    \end{equation}
    and that
    \begin{equation}\label{eq:conc_bulk}
        \P\left(\bigl|G^\bk(u;v)-\E\bigl[G^\bk(u;v)\bigr]\bigr| > \gap |t_2-t_1|\right)   \le C\exp\left(-C'\min\{1,\gap^2\}\,|t_2-t_1|^{1/3}\right).
    \end{equation}
\end{lemma}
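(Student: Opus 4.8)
I will prove \eqref{eq:conc_vert}; the bound \eqref{eq:conc_bulk} follows from the identical argument applied to the i.i.d.\ environment $\o^\bk$, whose weights are subexponential by \ref{P2}. Write $N\coloneqq t_2-t_1$ (so $N\ge c_0$), let $R=R(u;v)\subset\H$ be the finite set of cells visited by some path in $\Pi(u;v)$, so that $|R|\le N^2$, and fix once and for all the truncation level $M\coloneqq N^{1/3}$. Note that LLN separation is used only to guarantee $\gap>0$, so that the deviation $\gap N$ is positive; the estimate below is valid for any $\gap>0$.

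First I would truncate. Set $\wt\o(c)\coloneqq\o(c)\wedge M$ for $c\in R$ and let $\wt G(u;v)$ denote the free energy, resp.\ last passage time, built from $\wt\o$. On the event $\mc E\coloneqq\{\o(c)\le M\text{ for all }c\in R\}$ one has $G(u;v)=\wt G(u;v)$, and since \ref{P2} and Markov's inequality give $\P(\o(c)>M)\le Ce^{-zM}$ for each cell, a union bound yields $\P(\mc E^c)\le CN^2e^{-zN^{1/3}}$. To compare the means I would use the elementary $1$-Lipschitz property of $G$ in each individual weight (changing one weight by $\pm\delta$ changes $\log\sum_\pi e^{H(\pi)}$, and $\max_\pi H(\pi)$, by at most $\delta$): flipping the cells of $R$ from $\wt\o$ back to $\o$ one at a time shows $0\le G(u;v)-\wt G(u;v)\le\sum_{c\in R}(\o(c)-M)_+$, hence $0\le\bE[G(u;v)]-\bE[\wt G(u;v)]\le N^2\bigl(\bE[(\bkwt-M)_+]+\bE[(\vtwt-M)_+]\bigr)\le CN^2e^{-zN^{1/3}}$. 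Enlarging $c_0$ (in a way depending only on the law of $\o$), I may assume this quantity is $\le\gap N/2$ for all $N\ge c_0$.

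Next I would prove Gaussian‑type concentration of $\wt G(u;v)$ at scale $\sqrt N\,M$. Partition $R$ into columns $R_s\coloneqq R\cap(\Zpos\times\{s\})$, $s\in\lb t_1,t_2\rb$, of which there are at most $N+1$. Every path in $\Pi(u;v)$ meets each column in exactly one cell, so altering all the weights in a single column changes $H(\pi)$ by at most $M$ for every path $\pi$ (the truncated weights lie in $[0,M]$), and therefore changes $\wt G(u;v)$ by at most $M$. Thus $\wt G(u;v)$, viewed as a function of the $\le N+1$ independent column‑vectors of weights, has bounded differences $\le M$ in each argument, and McDiarmid's inequality gives $\P(|\wt G(u;v)-\bE[\wt G(u;v)]|>s)\le2\exp(-2s^2/((N+1)M^2))$ for all $s>0$. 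Combining with the previous paragraph: on $\mc E$ we have $G(u;v)=\wt G(u;v)$ and $|\bE[\wt G(u;v)]-\bE[G(u;v)]|\le\gap N/2$, so $|G(u;v)-\bE[G(u;v)]|>\gap N$ forces $|\wt G(u;v)-\bE[\wt G(u;v)]|>\gap N/2$, whence
\[
    \P\bigl(|G(u;v)-\bE[G(u;v)]|>\gap N\bigr)\le\P(\mc E^c)+2\exp\!\left(-\frac{\gap^2N^2}{2(N+1)M^2}\right)\le CN^2e^{-zN^{1/3}}+2e^{-\gap^2N^{1/3}/4},
\]
using $M=N^{1/3}$ and $N+1\le2N$. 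Absorbing $N^2e^{-zN^{1/3}}\le e^{-(z/2)N^{1/3}}$ (valid for $N\ge c_0$) and the fixed constant $z$ into the implicit constants produces the right‑hand side of \eqref{eq:conc_vert}.

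The main point — and, as flagged in \cref{rem:optimal-exponents}, the source of the crude exponent $1/3$ — is the column decomposition of Step two: although $G(u;v)$ depends on $\Theta(N^2)$ weights, an entire column of cells has influence only $O(M)$ on $\wt G(u;v)$ (a path uses exactly one cell per column), which localizes the fluctuations to scale $\sqrt N\,M$ rather than the naive $\sqrt{N^2}\,M$. The truncation level $M=N^{1/3}$ is then simply the choice balancing the truncation error $\approx e^{-zM}$ against the concentration error $\approx e^{-\gap^2N/M^2}$; optimizing, the common exponent is $\asymp N^{1/3}$, in contrast to the optimal rate $\asymp N$ attainable from sharper large‑deviations inputs for sums of subexponential weights.
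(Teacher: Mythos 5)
Your proof is correct and follows essentially the same route as the paper: truncate the weights at $|t_2-t_1|^{1/3}$, exploit that each path meets each time-slice $\Zpos\times\{s\}$ exactly once so a whole slice has influence $O(|t_2-t_1|^{1/3})$, apply a bounded-differences concentration inequality (your McDiarmid is the paper's Azuma--Hoeffding on the height-filtration Doob martingale), control the truncation error by a union bound and a mean comparison, and use LLN separation only to make the mean shift small relative to $\gap|t_2-t_1|$. The only differences are cosmetic (per-coordinate Lipschitz telescoping for the mean comparison instead of Cauchy--Schwarz on $\{G\ne\wh G\}$, and calling the time-slices ``columns'').
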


We will use the following fact in the proof of \cref{lem:large_dev}.
\begin{lemma}\label{lem:LSE}
    Fix $d\ge1$ and define functions $h_1,h_2:\R^d\to\R$ by
    \[
        h_1(x)\coloneqq \log\left(\sum_{i=1}^d e^{x_i}\right),\qquad h_2(x)\coloneqq \max_{i\in\lb 1,\,d\rb} x_i. 
    \]
    Then for $k\in\{1,2\}$, the function $h_k$ satisfies
    \[
        |h_k(x)-h_k(y)| \le \max_{i\in\lb 1,\,d\rb}|x_i-y_i|,\quad\forall x,y\in[0,\infty)^d.
    \]
\end{lemma}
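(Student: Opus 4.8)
The plan is to verify directly that each $h_k$ is $1$-Lipschitz with respect to the supremum norm, which is exactly the claimed inequality; in fact the restriction to $[0,\infty)^d$ plays no role, so I would prove the estimate on all of $\R^d$. To this end, fix $x,y\in\R^d$ and set $M\coloneqq\max_{i\in\lb 1,\,d\rb}|x_i-y_i|$, so that $x_i\le y_i+M$ for every $i$ (and symmetrically $y_i\le x_i+M$).

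First I would handle $h_2$. For each $i$ we have $x_i\le y_i+M\le h_2(y)+M$, and taking the maximum over $i$ gives $h_2(x)\le h_2(y)+M$. Interchanging the roles of $x$ and $y$ yields $|h_2(x)-h_2(y)|\le M$, as desired.

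Next I would handle $h_1$ by the same monotonicity input, now applied to the exponential: from $x_i\le y_i+M$ we get $e^{x_i}\le e^{M}e^{y_i}$ for each $i$, hence $\sum_{i=1}^d e^{x_i}\le e^{M}\sum_{i=1}^d e^{y_i}$, and taking logarithms gives $h_1(x)\le h_1(y)+M$; interchanging $x$ and $y$ again gives $|h_1(x)-h_1(y)|\le M$. (Equivalently, one could observe that $\nabla h_1(x)$ has nonnegative entries summing to $1$, so $\|\nabla h_1\|_1\equiv 1$, and integrate along the segment from $x$ to $y$; and $h_2$ is a pointwise maximum of the coordinate projections $x\mapsto x_i$, each trivially $1$-Lipschitz in the supremum norm, whence $h_2$ is as well.)

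There is no genuine obstacle here: the argument is a one-line estimate for each of the two functions, using only the monotonicity of $t\mapsto e^t$ (equivalently, that the gradient of log-sum-exp lies in the probability simplex) together with the elementary fact that a maximum of $1$-Lipschitz functions is $1$-Lipschitz. The lemma is recorded only because this Lipschitz property is the precise input needed to feed the free energies $G,G^\bk$ — which by \cref{lem:LSE} are $1$-Lipschitz functions of the weights along any path — into the concentration machinery behind \cref{lem:large_dev}.
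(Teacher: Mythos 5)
Your proof is correct, and you are right that the restriction to $[0,\infty)^d$ is not needed for your argument. The paper's own proof is essentially the parenthetical you give: for $h_1$ it computes that $\lVert\nabla h_1(x)\rVert_1=1$ for all $x$ and applies the mean value theorem, and for $h_2$ it invokes the reverse triangle inequality for the $\ell^\infty$-norm, which is where the nonnegativity hypothesis enters (it guarantees $h_2(x)=\lVert x\rVert_\infty$). Your primary argument—from $x_i\le y_i+M$ deduce $e^{x_i}\le e^{M}e^{y_i}$, sum and take logarithms for $h_1$, and take maxima directly for $h_2$—is a touch more elementary: it avoids calculus and the norm identification entirely and yields the inequality on all of $\R^d$ at no extra cost. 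Mathematically nothing is gained or lost either way (both are one-line verifications that $h_k$ is $1$-Lipschitz for the supremum norm), and your closing remark about the lemma's role matches its use in the paper: it is exactly the input that makes the truncated free energy a bounded-increment martingale in the Azuma--Hoeffding step of \cref{lem:large_dev}.
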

\begin{proof}[Proof of \cref{lem:LSE}]
    A direct calculation shows that the gradient $\nabla h_1$ has $\ell^1$-norm $\norm{\nabla h_1(x)}_1=1$ for all $x\in\R^d$. The inequality follows from the mean value theorem.
    As for $h_2$, \cref{lem:LSE} is just the reverse triangle inequality for the $\ell^\infty$-norm, restricted to the nonnegative orthant $[0,\infty)^d$.
\end{proof}

\begin{proof}[Proof of \cref{lem:large_dev}]
    For notational simplicity we only prove \eqref{eq:conc_vert}, but the same argument works for \eqref{eq:conc_bulk}. 
    By translation-invariance in the vertical direction, it suffices to treat the case $u=(x_1,0), v=(x_2,t)$.
    Consider the truncated environment $\wh\o$ given by
    \[
        \wh\o(y,s)\coloneqq \o(y,s)\wedge t^{1/3}\quad \text{for } (y,s)\in\H.
    \]
    Let $\wh{G}$ be the free energy $u\to v$ in $\wh\o$.
    For $j\in\Z$ we denote by $\FF_j$ the $\sigma$-algebra generated by $\wh\o$ up to height $j$, that is, $\FF_j\coloneqq\sigma\bigl(\wh\o(y,s):y\in\Zpos,\; s\le j \bigr)$.
    We first show that
    \begin{equation}\label{eq:conc_bounded_inc}
        \left|\E\bigl[\wh{G}\given[\FF_j]\bigr]-\E\bigl[\wh{G}\given[\FF_{j-1}]\bigr]\right| 
        \le 2t^{1/3}\quad\text{for all } j\in\lb 0,t\rb.
    \end{equation}
    For this we mimic an argument of \cite{KesSpeedConvergenceFirstPassage1993}.
    Fix a realization of the environment $\wh\o$.
    Also fix $j\in\lb0,t\rb$ and let $\wh\o'$ be the environment obtained from $\wh\o$ by replacing $\wh\o(y,j)$ with an independent copy $\wh\o'(y,j)$ for each $y\in\Zpos$.
    For any path $\pi:u\to v$, we have
    \begin{equation}\label{eq:influence_bound_kesten}
        \left|\sum_{(y,s)\in \pi}\wh\o(y,s) - \sum_{(y,s)\in\pi}\wh\o'(y,s)\right|
            = \bigl|\wh\o(\pi(j),j)-\wh\o'(\pi(j),j)\bigr| \le 2t^{1/3}.
    \end{equation}
    By \cref{lem:LSE},
    \[
        \left|\wh{G}-\wh{G}'\right|
        \le \sup_{\pi\in\Pi(u;v)}\left|\sum_{(y,s)\in \pi}\wh\o(y,s) - \sum_{(y,s)\in\pi}\wh\o'(y,s)\right|,
    \]
    where $\wh{G}'$ denotes the free energy $u\to v$ in $\wh\o'$.
    We substitute the above inequality into \eqref{eq:influence_bound_kesten} and average over $(\wh\o'(y,s))_{s\ge j,\,y\in\Zpos}$ to conclude \eqref{eq:conc_bounded_inc}.

    Having shown \eqref{eq:conc_bounded_inc}, we may apply the Azuma--Hoeffding inequality to $\wh{G}$.
    This yields absolute constants $C,C'>0$ such that for all $z>0$,
    \begin{equation}\label{eq:conc_azuma_hoeffding}
            \P\left(\bigl|\wh{G}-\bE[\wh{G}]\bigr| > z t\right)
            \le C\exp\left(-C'\frac{z^2t^2}{\sum_{i=0}^{t}t^{2/3}}\right)\le C\exp\left(-C'z^2 t^{1/3}\right).
    \end{equation}
    We now estimate the error introduced by truncating the weights. 
    We begin by observing that $\wh{G}$ and $G\coloneqq G(x_1,0;x_2,t)$ depend only on the weights inside the box
    \[
        \BB\coloneqq \lb \min\{x_1,x_2\} -t, \;\max\{x_1,x_2\}+t\rb\times \lb0,t\rb.
    \]
    As $\Pi(x_1,0;x_2,t)\ne\varnothing$, it follows that the box $\BB$ has area $|\BB|\le Ct^2$ for some absolute constant $C>0$ that does not depend on $x_1,x_2,t$.
    Therefore, a union bound over $\BB$ yields
    \begin{equation}\label{eq:conc_union_bound}
        \begin{split}
            \P\bigl(G\ne \wh{G}\bigr) 
            &\le \P[\sup_{(y,s)\in\BB}\o(y,s)>t^{1/3}]\\
            &\le Ct^2\exp(-C't^{1/3})\\
            &\le C\exp(-C't^{1/3}),
        \end{split}
    \end{equation}
    where we used the fact that $\bkwt,\vtwt$ are subexponential (\cref{sec:model}\ref{P2}).
    Combining \eqref{eq:conc_union_bound} with the inequality
    \[
        G-\wh{G} 
         \le L(x_1,0;x_2,t) + \log|\Pi(x_1,0;x_2,t)| \le \sum_{(y,s)\in\BB}\o(y,s) + t\log 4
    \]
    and the Cauchy--Schwarz inequality, we get
    \begin{equation}\label{eq:conc_expectation}
        \begin{split}
            0 \le \E\bigl[G-\wh{G}\bigr]
            &= \E[\bigl(G-\wh{G}\bigr)\1_{\{G\ne \wh{G}\}}]\\
            &\le C\,\E[\left(\sum_{(y,s)\in\BB}\o(y,s)+t\log 4\right)^2]^{1/2}\exp(-C't^{1/3})\\
            &\le Ct^2\exp(-C't^{1/3})\\
            &\le C\exp(-C't^{1/3}),
        \end{split}
    \end{equation}
    where $C,C'$ depend only on the law of $\o$.
    By our hypothesized LLN separation (i.e. $\gap>0$), we can choose a constant $c_0>0$ depending only on $C,C',\gap$, such that
    \begin{equation}\label{eq:purpose_of_sep}
        C\exp(-C't^{1/3}) <\frac{\gap}{2}t\quad\text{for all } t\ge c_0.
    \end{equation}
    Then, combining \eqref{eq:conc_azuma_hoeffding}, \eqref{eq:conc_union_bound}, and \eqref{eq:conc_expectation}, we conclude that for $t\ge c_0$,
    \begin{equation}\label{eq:conc_final}
        \begin{split}
            \P\Bigl(\bigl|G-\bE[G]\bigr|>\gap t\Bigr)
            &\le \P\left(\bigl|\wh G-\bE[\wh G]\bigr| + \bigl|\bE[G]-\bE[\wh G]\bigr|>\gap t\right) + \P\Bigl(G\ne \wh G\Bigr)\\
            &\le \P\left(\bigl|\wh G-\bE[\wh G]\bigr|>\frac{\gap}{2}t\right) + C\exp\left(-C't^{1/3}\right)\\
            &\le C\exp\left(-C'\gap^2t^{1/3}\right)+ C\exp\left(-C't^{1/3}\right).
        \end{split}
    \end{equation}
    This proves \cref{lem:large_dev}.
\end{proof}

\begin{remark}[Suboptimality of \cref{lem:large_dev}]\label{rem:suboptimal_large_dev}
    \cref{lem:large_dev} is far from sharp.
    For instance, the proof implies the same result with $\gap$ replaced by any fixed $z>0$, provided that $c_0$ is increased (depending on $z$).
    Faster tail decay rates are also known
    (e.g. \cite{LWExponentialInequalitiesMartingales2009}, see also \cref{rem:optimal-exponents}).
    However, we are not aware of a suitable estimate in the literature that applies simultaneously to the zero temperature and positive temperature models.
    \cref{lem:large_dev} suffices for our purposes as-is, so we did not attempt to optimize it further.
\end{remark}


\subsection{LLN comparisons}\label{sec:LLN_comparisons}
We record two lemmas comparing LLNs of various free energies, with the aim of streamlining our upcoming applications of LLN separation.

We introduce full-space analogues of the objects from \cref{def:bulk}.
Let $\Pi_\f(x_1,t_1;x_2,t_2)$ be the set of \emph{all} lattice paths with steps in $\{(-1,1),(1,1)\}$ joining $(x_1,t_1)$ to $(x_2,t_2)$, not only those confined to the half-space $\H$.
We extend the environment $\o^\bk$ to a full-space environment of i.i.d. copies of $\bkwt$ and define
\[
    F^\bk_\f(x_1,t_1;x_2,t_2)\coloneqq \log\left(\sum_{\pi\in\Pi_\f(x_1,t_1;x_2,t_2)}\exp\left(\sum_{(x,t)\in\pi}\o^\bk(x,t)\right)\right)
\]
and
\[
    L^\bk_\f(x_1,t_1;x_2,t_2)\coloneqq \sup_{\pi\in\Pi_\f(x_1,t_1;x_2,t_2)}\sum_{(x,t)\in\pi}\o^\bk(x,t).
\]
We fix $G\in\{F,L\}$ and write $g\coloneqq f$ or $g\coloneqq \ell$ accordingly.
Consider the cones
\[
    D\coloneqq\bigl\{\theta\in\R^2\setminus\{(0,0)\} : \theta_2\ge \theta_1\ge0\bigr\}, \qquad D_\f\coloneqq\bigl\{\theta\in\R^2\setminus\{(0,0)\} : \theta_2\ge |\theta_1|\bigr\}.
\]
We extend \eqref{eq:asymp_free_energy_bulk} by defining
\[
    g^\bk(\theta)\coloneqq \lim_{n\to\infty}\frac{\E[G^\bk\bigl(0,0;2\floor{n\theta_1/2}, 2\floor{n\theta_2/2}\bigr)]}{n},\qquad \theta\in D
\]
and
\[
    g^\bk_\f(\theta)\coloneqq \lim_{n\to\infty}\frac{\E[G^\bk_\f\bigl(0,0;2\floor{n\theta_1/2}, 2\floor{n\theta_2/2}\bigr)]}{n},\qquad \theta\in D_\f,
\]
where $\floor{\cdot}$ is the floor function.
These limits exist by superadditivity.
It follows that $g^\bk(\lambda\theta)=\lambda g^\bk(\theta)$ and $g^\bk_\f(\lambda\theta)=\lambda g^\bk(\theta)$ for any $\lambda>0$.

Write $\ee\coloneqq (0,1)\in D$.
The vertical LLN $g^\bk$ appearing in \eqref{eq:asymp_free_energy_bulk} is presently denoted by $g^\bk(\ee)$.
The following lemma asserts that in an i.i.d. environment, the half-space vertical LLN coincides with the full-space vertical LLN.
\begin{lemma}\label{lem:iid_full=half}
    $g^\bk_\f(\ee) = g^\bk(\ee)$.
\end{lemma}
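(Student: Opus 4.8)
The plan is to identify both $g^\bk(\ee)$ and $g^\bk_\f(\ee)$ with the single quantity $\sup_{K\ge1}M_K$, where $M_K$ is the growth rate of the bulk free energy inside a vertical strip of width $2K$. The key point is that this growth rate is insensitive to the boundary heights one uses, so the strip can be positioned either against the wall $\V$ (recovering the half-space constant) or straddling $\V$ symmetrically (recovering the full-space constant).

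Concretely, for an integer $K\ge1$, an even $m$, and a level $a\in\lb0,2K\rb$, let $G^{(K)}_a(m)$ denote the free energy ($F$ or $L$, computed in the bulk environment $\o^\bk$) over paths from $(a,0)$ to $(a,m)$ that stay in the strip $\{x\in\lb0,2K\rb\}$. Concatenating at the intermediate height $m$ makes $\bE[G^{(K)}_a(m)]$ superadditive in $m$ up to an additive $O(1)$, so $M_K(a):=\lim_{m\to\infty}m^{-1}\bE[G^{(K)}_a(m)]$ exists and is finite (the weights being subexponential). First I would show that $M_K(a)$ does not depend on $a$: given a strip path from $(a,0)$ to $(a,m)$, prepend the deterministic monotone connector $(a',0)\to(a,|a-a'|)$ and append the connector $(a,m+|a-a'|)\to(a',m+2|a-a'|)$; both connectors remain in the strip and have length at most $2K$, hence contribute only $O(K)$ to the free energy in expectation (for $G=F$, gluing a fixed connector multiplies the partition function by a factor whose logarithm has expectation $\ge-O(K)$). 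Using also the translation invariance of the i.i.d.\ environment $\o^\bk$, this yields $\bE[G^{(K)}_{a'}(m+2|a-a'|)]\ge\bE[G^{(K)}_a(m)]-O(K)$; dividing by $m+2|a-a'|\sim m$ and letting $m\to\infty$ gives $M_K(a')\ge M_K(a)$, hence equality. Denote the common value by $M_K$ (it is nondecreasing in $K$, since a wider strip admits more paths).

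The second step is to evaluate $\sup_K M_K$ in two ways. Taking $a=0$: for every even $m\le4K$ a path from $(0,0)$ to $(0,m)$ automatically satisfies $\pi(t)\le m/2\le2K$, so the upper boundary of the strip is inactive and $G^{(K)}_0(m)=G^\bk(0,0;0,m)$; together with the trivial pointwise bound $G^{(K)}_0(m)\le G^\bk(0,0;0,m)$ this gives $M_K\le g^\bk(\ee)$ for every $K$ and $M_K\ge m^{-1}\bE[G^\bk(0,0;0,m)]$ for every even $m\le4K$, so $\sup_K M_K=g^\bk(\ee)$. Taking instead $a=K$: translating by $(-K,0)$, a strip path from $(K,0)$ to $(K,m)$ is exactly a full-space path from $(0,0)$ to $(0,m)$ confined to $\lb-K,K\rb$, and for $m\le2K$ this confinement is inactive, so $G^{(K)}_K(m)=G^\bk_\f(0,0;0,m)$, whence $\sup_K M_K\ge g^\bk_\f(\ee)$. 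Since $g^\bk(\ee)\le g^\bk_\f(\ee)$ trivially ($\Pi(0,0;0,n)\subseteq\Pi_\f(0,0;0,n)$ and the two environments agree on $\H$), combining the two evaluations gives $g^\bk_\f(\ee)=\sup_K M_K=g^\bk(\ee)$.

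I do not anticipate a genuine obstacle here: the whole argument uses only superadditivity and translation invariance of $\o^\bk$. The one thing requiring care is the bookkeeping around gluing connectors---parity of $m$, the single lattice vertices shared between a connector and the attached path (an extra additive $O(1)$), and the observation that all the $O(K)$ losses are constants that disappear after dividing by $m$. The conceptual crux, which is really what makes the identity true, is the boundary-height independence of the strip growth rate $M_K$: this is precisely what allows one and the same quantity to realize the half-space vertical constant (via the boundary height $a=0$) and the full-space vertical constant (via $a=K$).
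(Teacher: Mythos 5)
Your argument is correct: the bookkeeping you flag (junction vertices, parity, the $O(K)$ connector cost disappearing after dividing by $m$) is routine, and the two evaluations of $\sup_K M_K$ do yield $g^\bk(\ee)=g^\bk_\f(\ee)$ once combined with the trivial inclusion inequality. It is essentially the paper's proof in different packaging: the paper injects the full-space rate by pinning the half-space polymer at waypoints $\bigl(\tfrac{k+4}{2},ik\bigr)$, so that each block of duration $k$ cannot reach $\V$ and its half-space and full-space free energies coincide---exactly the ``wall-invisibility'' observation underlying your evaluations of the strip rate at $a=0$ and $a=K$---while your auxiliary lemma that $M_K(a)$ is independent of $a$ is a mild detour the paper avoids by using the waypoint decomposition and superadditivity directly at a single near-optimal full-space scale $k$.
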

\begin{proof}
    The inequality $g^\bk_\f(\ee) \ge g^\bk(\ee)$ follows from the fact that $\Pi(0,0;0,n)\subset \Pi_\f(0,0;0,n)$.
    For the reverse inequality, we first fix $\e>0$ and choose an even integer $k=k(\e)>0$ such that
    \begin{equation}\label{eq:lln_scale_k}
        \frac{\E[G^\bk_\f(0,0;0,k)]}{k} \ge g^\bk_\f(\ee)-\e.
    \end{equation}
    By superadditivity and the fact that the free energies are $\ge 0$ almost surely, we have
    \begin{equation}\label{eq:lln_bulkdecomp}
        G^\bk(0,0;0,n) \ge \sum_{i=2}^{n/k - 3} G^\bk\left(\frac{k+4}{2},\; ik;\; \frac{k+4}{2},\;(i+1)k\right).
    \end{equation}
    On the other hand, any full-space path $\pi\in\Pi_\f(0,0;0,n)$ satisfying
    \begin{equation}\label{eq:lln_bulkdecomp2}
        \pi(ik) =\frac{k+4}{2} \quad\text{for all } i\in\lb 2,\,n/k - 2\rb
    \end{equation}
    must also satisfy $\pi\cap \H_{\lb 2k,\, n-2k\rb} \subset \H\setminus\V$ (cf. \cref{rem:reformulating_pinning}).
    Therefore for $i\in\lb 2,\,n/k-3\rb$,
    \begin{equation}\label{eq:sep_from_vert_iid}
        G^\bk\left(\frac{k+4}{2},\; ik;\; \frac{k+4}{2},\; (i+1)k\right) = G^\bk_\f\left(\frac{k+4}{2},\; ik;\; \frac{k+4}{2},\; (i+1)k\right).
    \end{equation}
    By \eqref{eq:lln_scale_k}, \eqref{eq:lln_bulkdecomp}, \eqref{eq:lln_bulkdecomp2}, \eqref{eq:sep_from_vert_iid}, and vertical translation-invariance,
    \[
        \begin{split}
            \frac{\E[G^\bk(0,0;0,n)]}{n} &\ge \frac{1}{n}\sum_{i=2}^{n/k-3} \E[G^\bk_\f\left(\frac{k+4}{2},\; ik;\; \frac{k+4}{2},\; (i+1)k\right)]\\
            &= \left(\frac{1}{k}-\frac{4}{n}\right)\E[G^\bk_\f\left(0,0;0,k\right)]\\
            &\ge g^\bk_\f(\ee) - \e - O(1/n).
        \end{split}
    \]
    Let $n\to\infty$ to get $g^\bk(\ee)\ge g^\bk_\f(\ee)-\e$, then let $\e\downarrow0$.
\end{proof}

The next lemma will help us establish pinning for polymers whose endpoints do not lie on $\V$.
\begin{lemma}[Vertical LLN is largest]\label{lem:vertical_maximizes_g}
    For all $\theta\in D_\f$ with $\ell^1$-norm $\norm{\theta}_1=1$,
    \[
        g^\bk(\ee) \ge g^\bk_\f(\theta).
    \]
    In particular, LLN separation \eqref{eq:DEF_lln_separation} implies that $g>g^\bk_\f(\theta)$ for all $\theta\in D_\f$ with $\norm{\theta}_1=1$.
\end{lemma}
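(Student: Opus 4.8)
The plan is to show that the full-space bulk LLN $g^\bk_\f$, as a function on the cone $D_\f$, is maximized over the $\ell^1$-sphere $\{\norm{\theta}_1=1\}$ at the vertical direction $\ee$, and then transfer this to $g^\bk(\ee)$ via \cref{lem:iid_full=half}. Two structural facts about $g^\bk_\f$ drive the argument. First, \emph{reflection symmetry}: the full-space i.i.d.\ environment $\o^\bk$ is invariant in law under $(x,t)\mapsto(-x,t)$, which carries $\Pi_\f(0,0;(x,t))$ bijectively onto $\Pi_\f(0,0;(-x,t))$ and preserves path Hamiltonians; hence $G^\bk_\f(0,0;(x,t))\law G^\bk_\f(0,0;(-x,t))$, and passing to the limit gives $g^\bk_\f(\theta_1,\theta_2)=g^\bk_\f(-\theta_1,\theta_2)$. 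Second, \emph{superadditivity across directions}: concatenating a path realizing $G^\bk_\f(0,0;u)$ with an independent path realizing $G^\bk_\f(u;u+v)$ yields a path from $0$ to $u+v$ whose weight differs from the sum of the two weights by only the single shared vertex weight (of finite mean by \cref{sec:model}\ref{P2}); taking logs or suprema, then expectations, then the $n\to\infty$ limit gives $g^\bk_\f(\theta+\theta')\ge g^\bk_\f(\theta)+g^\bk_\f(\theta')$ for $\theta,\theta'\in D_\f$. Combined with the homogeneity $g^\bk_\f(\lambda\theta)=\lambda g^\bk_\f(\theta)$ noted above, this gives midpoint concavity: $2\,g^\bk_\f\bigl(\tfrac{\theta+\theta'}{2}\bigr)\ge g^\bk_\f(\theta)+g^\bk_\f(\theta')$.

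Now fix $\theta=(\theta_1,\theta_2)\in D_\f$ with $\norm{\theta}_1=1$; since $\theta_2\ge|\theta_1|\ge0$ we have $\theta_2=1-|\theta_1|\in[\tfrac12,1]$. Apply midpoint concavity to $\theta$ and its reflection $\theta'\coloneqq(-\theta_1,\theta_2)$, whose midpoint is $(0,\theta_2)=\theta_2\,\ee$:
\[
    2\theta_2\, g^\bk_\f(\ee) = 2\,g^\bk_\f(\theta_2\,\ee) \ge g^\bk_\f(\theta)+g^\bk_\f(\theta') = 2\,g^\bk_\f(\theta),
\]
using homogeneity for the first equality and reflection symmetry for the last. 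Thus $g^\bk_\f(\theta)\le \theta_2\, g^\bk_\f(\ee)\le g^\bk_\f(\ee)$, the final step because $\theta_2\le1$ and $g^\bk_\f(\ee)\ge0$ (all weights are positive by \cref{sec:model}\ref{P1}, so all free energies are nonnegative). By \cref{lem:iid_full=half}, $g^\bk_\f(\ee)=g^\bk(\ee)$, proving $g^\bk(\ee)\ge g^\bk_\f(\theta)$. For the ``in particular'' clause: LLN separation \eqref{eq:DEF_lln_separation} states $g>g^\bk=g^\bk(\ee)$ (the vertical bulk LLN $g^\bk$ of \cref{def:bulk} and $g^\bk(\ee)$ agree), so $g>g^\bk(\ee)\ge g^\bk_\f(\theta)$ for every such $\theta$.

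There is no genuine obstacle here; the only points needing a little care are making the superadditivity argument uniform across directions (rather than along a fixed ray, as was used to define the limits $g^\bk_\f(\theta)$), checking that the resulting limits exist for every $\theta\in D_\f$ by the usual subadditive-type argument, and verifying that the $O(1)$ correction coming from the shared vertex weight is negligible after dividing by $n$ — all routine given the subexponential moment bound of \cref{sec:model}\ref{P2} and the nonnegativity of the weights.
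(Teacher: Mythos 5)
Your proof is correct and follows essentially the same route as the paper: superadditivity of the full-space bulk free energy, the reflection symmetry $x\mapsto -x$, homogeneity, positivity of the weights, and \cref{lem:iid_full=half} to convert $g^\bk_\f(\ee)$ into $g^\bk(\ee)$. The only real difference is the decomposition point: the paper splits the vertical route $\mbf 0\to n\ee$ at $2\floor{n\theta/4}$ and compares the two legs ``by symmetry'' (which, taken literally as an equality, only holds as an inequality and implicitly uses the monotonicity coming from positive weights), whereas you add $\theta$ to its exact mirror image $(-\theta_1,\theta_2)$ so the two legs are genuine reflections of one another and then finish with homogeneity, $\theta_2\le 1$, and $g^\bk_\f(\ee)\ge 0$ --- a slightly cleaner packaging of the same idea.
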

\begin{proof}
    Write $\mbf 0 \coloneqq (0,0)$.
    By superadditivity,
    \[
        \E[G^\bk_\f(\mbf{0};n\ee)] \ge \E[G^\bk_\f(\mbf{0}; 2\floor{n\theta/4})]
        + \E[G^\bk_\f(2\floor{n\theta/4}; n\ee)] - O(1),
    \]
    where the $O(1)$ error is from double-counting $\o^\bk(2\floor{n\theta/4})$.
    Here we apply the floor function $\floor{\cdot}$ entry-wise. 
    On the other hand, by symmetry and the fact that $\norm{\theta}_1=1$, we have
    \[
        \E[G^\bk_\f(\mbf{0};2\floor{n\theta/4})]
        = \E[G^\bk_\f(2\floor{n\theta/4}; n\ee)]
        +O(1),
    \]
    where the $O(1)$ error is from taking $\floor{\cdot}$.
    Therefore
    \[
        g^\bk_\f(\ee) = g^\bk(\ee) \ge 2g^\bk_\f(\theta/2)=g^\bk_\f(\theta),
    \]
    where the first equality is by \cref{lem:iid_full=half}.
\end{proof}

In the sequel we resume our use of the abbreviation $g^\bk\coloneqq g^\bk(\ee)$.


\subsection{Polymer ordering and coalescence}\label{sec:polymer_ordering}
Recall the polymer ordering phenomenon described in \cref{sec:model}: the path which is pointwise the left-most of two geodesics is itself a geodesic.
In particular, for any $u,v\in\H$ with $\Pi(u;v)\ne\varnothing$, there exists a unique left-most geodesic $u\to v$.
This uniqueness implies that when two left-most geodesics intersect, they \emph{coalesce}, sharing as much of their remaining journeys as possible:
\begin{lemma}[Geodesic coalescence]\label{lem:coalescence}
    Fix $u,u',v,v'\in\H$ and let $\G_{u;v}:u\to v,\,\G_{u';v'}:u'\to v'$ be left-most geodesics.
    The following holds almost surely.
    If $\G_{u;v}(r)=\G_{u';v'}(r)$ for some $r$, and $\G_{u;v}(s)\ne\G_{u';v'}(s)$ for some $s>r$, then $\G_{u;v}(t)\ne\G_{u';v'}(t)$ for all $t\ge s$.
    In other words, the intersection $\G_{u;v}\cap\G_{u';v'}$ is a connected subset of $\H$.
\end{lemma}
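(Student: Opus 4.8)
The plan is to deduce the lemma from a single \emph{restriction principle}: if $\G$ is a left-most geodesic and $p,q$ are two of the lattice points it visits, then the portion of $\G$ joining $p$ to $q$ is itself the left-most geodesic $p\to q$. Granting this, I would argue by contradiction. The statement is vacuous unless both $\Pi(u;v)$ and $\Pi(u';v')$ are nonempty, and on that event the left-most geodesics are well-defined and the argument below is deterministic (which is why the claim holds almost surely). Suppose $\G_{u;v}$ and $\G_{u';v'}$ agree at some height $r$, disagree at some height $s>r$, yet agree again at some height $t\ge s$. Since they disagree at $s$ we must have $t>s>r$, and $r,s,t$ all lie in the common domain of the two geodesics. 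Writing $p$ for their common value at height $r$ and $q$ for their common value at height $t$, the restriction principle says that $\G_{u;v}$ restricted to $\lb r,t\rb$ and $\G_{u';v'}$ restricted to $\lb r,t\rb$ are both the left-most geodesic $p\to q$; by the uniqueness of left-most geodesics recorded in \cref{sec:model}, these restrictions coincide, so $\G_{u;v}(s)=\G_{u';v'}(s)$, a contradiction. This shows that the set $A$ of heights at which the two geodesics agree is an interval of integers: any purported gap would contain a disagreement height below some higher agreement height, contradicting the previous sentence. Since consecutive vertices of a path are lattice-adjacent, the intersection $\G_{u;v}\cap\G_{u';v'}=\{(\G_{u;v}(w),w):w\in A\}$ is then a connected subset of $\H$, as claimed.

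To prove the restriction principle I would invoke the two inputs from \cref{sec:model}: polymer ordering and the resulting uniqueness of the left-most geodesic between any two points. Fix heights $r_1<r_2$ along a left-most geodesic $\G:u\to v$, set $p\coloneqq\G(r_1)$, $q\coloneqq\G(r_2)$, and let $\G^*$ be the restriction of $\G$ to $\lb r_1,r_2\rb$, a path $p\to q$. The key bookkeeping observation is that any path $\pi:u\to v$ passing through $p$ at height $r_1$ and $q$ at height $r_2$ decomposes into its pieces $\pi_1,\pi_2,\pi_3$ with heights $\le r_1$, in $\lb r_1,r_2\rb$, and $\ge r_2$ respectively, and these overlap only at the doubly-counted vertices $p$ and $q$, so $H(\pi)=H(\pi_1)+H(\pi_2)+H(\pi_3)-\o(p)-\o(q)$. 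Two consequences: first, $\G^*$ is a geodesic $p\to q$, since otherwise replacing $\pi_2=\G^*$ by a genuine geodesic $p\to q$ (a legitimate concatenation, as the pieces share the endpoints $p,q$) would strictly increase $H(\G)$, contradicting $H(\G)=L(u;v)$; second, $\G^*$ is in fact the \emph{left-most} geodesic $p\to q$, for if $\Lambda$ denotes the latter then $\Lambda\le\G^*$ pointwise on $\lb r_1,r_2\rb$, and replacing $\G^*$ by $\Lambda$ inside $\G$ produces a path $\G'$ which, by the decomposition identity together with $H(\Lambda)=H(\G^*)=L(p;q)$, is again a geodesic $u\to v$ satisfying $\G'\le\G$ pointwise; by left-most-ness of $\G$ this forces $\G'=\G$, hence $\Lambda=\G^*$.

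I expect the only real (and still minor) obstacle to be the second half of the restriction principle—showing that the restriction of a left-most geodesic is again left-most rather than merely a geodesic—since this is the one step that genuinely uses polymer ordering and uniqueness, not just additivity of the Hamiltonian; the remainder is routine path-concatenation bookkeeping in which the sole subtlety is correctly accounting for the two shared endpoints $p,q$. I would take care throughout that every spliced path is a legitimate directed path (gluing is valid precisely because the pieces share their endpoints) and that the comparisons $\Lambda\le\G^*$ and $\G'\le\G$ are interpreted pointwise over the relevant common range of heights.
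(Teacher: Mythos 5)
Your proof is correct and follows essentially the same route as the paper: restrict the two left-most geodesics between a common earlier point and a common later point, and derive a contradiction from uniqueness of the left-most geodesic. Your explicitly stated ``restriction principle'' (proved via the splice-and-compare argument with the $-\o(p)-\o(q)$ bookkeeping) simply spells out the step that the paper's terser ``this contradicts uniqueness'' leaves implicit, so the two arguments coincide in substance.
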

\begin{proof}
    Suppose there exists $r$ such that $\G_{u;v}(r)=\G_{u';v'}(r)$ and $\G_{u;v}(r+1)\ne \G_{u';v'}(r+1)$.
    Assume for the sake of contradiction that $\G_{u;v}, \G_{u';v'}$ intersect above height $r+1$.
    Let $t$ be the first height at which such an intersection occurs, i.e. $t\coloneqq\min\bigl\{s>r+1:\G_{u;v}(s)=\G_{u';v'}(s)\bigr\}.$
    Then restricting $\G_{u;v}, \G_{u';v'}$ to the strip $\H_{\lb r,t\rb}$ produces two geodesics $(\G_{u;v}(r),r)\to(\G_{u;v}(t),t)$, one of which lies strictly to the left of the other (except at the starting and ending points).
    This contradicts uniqueness.
\end{proof}

The following lemma establishes positive temperature analogues of the above notions.
\begin{lemma}[Positive temperature polymer ordering and coalescence]\label{lem:polymer_ordering}
    Fix points $u=(x_1,t_1),u'=(y_1,t_1), v=(x_2,t_2),v'=(y_2,t_2)\in\H$ with $t_1<t_2$ and $x_i\le y_i$ for $i=1,2$.
    Let $\pi_{u;v}:u\to v$ and $\pi_{u';v'}:u'\to v'$ be polymers, i.e. paths distributed according to $\Q^{u;\,v}$ and $\Q^{u';\,v'}$, respectively.
    There exists a coupling of $\Q^{u;\,v}, \Q^{u';\,v'}$ under which the following hold:
    \begin{enumerate}[label=(\alph*)]
        \item \label{one} $\pi_{u;v}$ lies to the left of $\pi_{u';v'}$, and 
        \item \label{two} 
        $\pi_{u;v}\cap\pi_{u';v'}$ is a connected subset of $\H$.
    \end{enumerate}        
\end{lemma}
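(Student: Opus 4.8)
The plan is to construct the coupling by a \emph{greedy left-most / right-most sampling} procedure that mirrors the zero-temperature argument. First I would reduce part \ref{two} to part \ref{one} exactly as in the proof of \cref{lem:coalescence}: if $\pi_{u;v}$ lies weakly to the left of $\pi_{u';v'}$ and the two paths meet at height $r$ but have separated again by height $s>r$, then restricting both paths to the strip $\H_{\lb r,t\rb}$ (where $t$ is the first height above their separation at which they meet again) would produce two paths between the same pair of lattice points, one strictly left of the other in the open interval. This does not by itself contradict anything probabilistic, so the right framing is: I will build the coupling so that, \emph{conditionally on the two entry/exit points into any common strip, the two paths are sampled from the same conditional polymer law} — and then the ordering forces them to agree on the strip. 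Concretely, the cleanest route is to realize both polymers as functions of a single family of i.i.d.\ uniform random variables via sequential sampling of the increments from bottom to top, where at each height one samples the left path's step first using a rule that stochastically favors the left step, and the right path's step using the complementary rule; the planarity of the lattice (steps only in $\{(-1,1),(1,1)\}$) plus the FKG-type monotonicity of the polymer measure in the endpoints is what makes this consistent.

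The key structural input is the following monotonicity, which I would isolate as the heart of the argument: if $u\le u'$ and $v\le v'$ coordinatewise (same heights), then the law of $\pi_{u;v}$ is stochastically dominated by that of $\pi_{u';v'}$ in the sense that each can be built so that $\pi_{u;v}(t)\le\pi_{u';v'}(t)$ for all $t$. At zero temperature this is the polymer-ordering statement from \cref{sec:model}; at positive temperature the analogue follows because the partition-function weights satisfy a log-supermodularity (Markov / Gibbs) property: for a path pinned at $(x,s)$, the conditional law of the remaining portion $(x,s)\to v$ is $\Q^{(x,s);v}$, and these conditional laws are monotone in $v$. I would prove the one-step version — that the conditional probability of taking the step $(+1,1)$ from $(x,s)$ on the way to $v$ is nondecreasing in $v$ and in $x$ — by writing that probability as a ratio of partition functions and checking the requisite inequality $Z(x{+}1,s{+}1;v)\,Z(x{-}1,s{+}1;v')\ge Z(x{+}1,s{+}1;v')\,Z(x{-}1,s{+}1;v)$, which is a planar crossing/switching inequality: given a path from $(x{+}1,s{+}1)$ to $v$ and one from $(x{-}1,s{+}1)$ to $v'$, planarity forces them to cross, and swapping their tails past the crossing point produces a path to $v'$ and a path to $v$ with the same total weight. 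Iterating this one-step monotonicity up the strip, sampling the left path's steps first and coupling the right path's steps to always sit weakly to the right, yields the monotone coupling and hence \ref{one}.

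With \ref{one} in hand, \ref{two} is essentially automatic under the \emph{right} coupling, and this is the subtle point I would be most careful about: the monotone coupling from the previous paragraph must additionally be arranged so that it is \emph{Markovian in a way that respects coalescence} — i.e., once $\pi_{u;v}(r)=\pi_{u';v'}(r)=:x$, the two paths are thereafter sampled from a common realization of $\Q^{(x,r);\,v}$ and $\Q^{(x,r);\,v'}$ built from the \emph{same} randomness, with the left one again sampled first. Then on any height where they have already coalesced, the left path's step is sampled and the right path is set equal to it whenever that is consistent with ordering — and by \ref{one} applied to the pair of endpoints $v\le v'$ from the common point $(x,r)$, it \emph{is} consistent until the heights where the ordering constraint at the top ($x_2\le y_2$) finally forces them apart, after which \ref{one} guarantees they never re-cross and hence never re-meet. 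So the intersection is an interval of heights, proving \ref{two}.

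The main obstacle I anticipate is getting the coupling construction genuinely consistent as a single measure rather than a vague ``sample left first'' heuristic: one must exhibit a deterministic measurable map from $[0,1]^{\text{(lattice sites)}}$ (or from the environment together with independent uniforms) producing the pair $(\pi_{u;v},\pi_{u';v'})$, verify its two marginals are correct, and verify monotonicity and the coalescence property hold pathwise. The one-step partition-function inequality (the planar tail-swapping estimate) is the technical crux; everything else is induction on height and bookkeeping. I would expect the paper to handle the measurability by an explicit sequential-sampling recipe and to state the one-step inequality as a standalone sub-lemma proved by the crossing argument. If the environment weights can be zero or the geometry is degenerate this needs minor care, but under \cref{sec:model}\ref{P1} all weights are strictly positive, so all partition functions in play are strictly positive and the ratios are well defined.
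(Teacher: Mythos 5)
Your route is genuinely different from the paper's, and it works for part \ref{one} but not, as written, for part \ref{two}. For comparison: the paper fixes the environment and first treats the case $v=v'$ by taking two \emph{independent} polymers $\pi_{u;v},\pi_{u';v}$ and merging them at their first meeting time $\tau$; because the terminal points coincide, the post-$\tau$ conditional laws of both paths are the \emph{same} measure $\Q^{W;\,v}$, so swapping tails at $W$ preserves both marginals (strong Markov property of the pair), and ordering plus connectedness of the intersection are automatic from the construction. The general case $v\ne v'$ is then obtained by time-reversing and repeating this one-endpoint construction against the middle path $\pi_{u';v}$, and finally averaging the middle path out. No monotonicity, FKG, or TP2 input is used anywhere. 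Your part \ref{one}, via one-step monotonicity of the transition kernel in the endpoint, is a viable alternative, with two caveats: the displayed inequality is reversed --- what the tail-swapping injection proves, and what the monotonicity actually requires, is $Z(x{+}1,s{+}1;v')\,Z(x{-}1,s{+}1;v)\ \ge\ Z(x{+}1,s{+}1;v)\,Z(x{-}1,s{+}1;v')$ for $v\le v'$ --- and monotonicity in $x$ is not needed, since by parity two ordered paths can only have their order threatened when they occupy the same site, where endpoint-monotonicity alone suffices to couple the steps.

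The genuine gap is part \ref{two}. When $v\ne v'$, the conditional laws $\Q^{(x,r);\,v}$ and $\Q^{(x,r);\,v'}$ issued from a common point are \emph{different} measures, so the recipe ``the right path is set equal to the left path's step whenever that is consistent with ordering'' does not produce the marginal $\Q^{u';\,v'}$; under any coupling with the correct marginals, the two paths separate at intermediate heights with positive probability. Once they have separated, your inference ``they never re-cross and hence never re-meet'' is a non sequitur: two paths satisfying $\pi_{u;v}(t)\le\pi_{u';v'}(t)$ for all $t$ can separate and later touch again without ever crossing, so connectedness of the intersection does not follow from \ref{one} plus a generic monotone coupling. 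Some additional mechanism is required, and this is precisely what the paper's construction supplies: paths are glued only from a meeting time onward, and only in a situation where their conditional laws literally coincide (common endpoint), with the two-sided statement assembled from two such one-sided couplings via time reversal of the random walk. Without that idea (or a substitute, e.g.\ an explicit Markovian coupling for which you verify both marginals and prove that separation is absorbing), part \ref{two} remains unproved in your proposal.
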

\begin{proof}
    We fix a realization of the environment $\o$, so that the only randomness in the following discussion is from the underlying random walk.

    We first construct a coupling with the desired properties in the case $v=v'$.
    Fix independent polymers 
    \[
        \pi_{u;v}:u\to v,\qquad\pi_{u';v}:u'\to v.
    \]
    We view $\pi_{u;v}, \pi_{u';v}$ as functions $\lb t_1,t_2\rb\to\Zpos$ and define a $\Zpos\times\Zpos$-valued process $\bm{\pi}$ by
    \[
        \bm{\pi}(t)\coloneqq
        \bigl(\pi_{u;v}(t),\pi_{u';v}(t)\bigr).
    \]
    Let $\GG=(\GG_t)_{t\in\lb t_1,t_2\rb}$ be the natural filtration induced by $\bm{\pi}$, and consider the $\GG$-stopping time
    \[
        \tau\coloneqq \inf\bigl\{t:\pi_{u;v}(t)=\pi_{u';v}(t)\bigr\}.
    \]
    Write
    \[
        W\coloneqq(\pi_{u;v}(\tau),\tau).
    \]
    Let $\pi_{u;v}^-$ be the segment of $\pi_{u;v}$ from $u$ to $W$, and let $\pi_{u;v}^+$ be the segment of $\pi_{u;v}$ from $W$ to $v$.
    Define $\pi_{u';v}^{\pm}$ analogously.
    It follows from the independence of $\pi_{u;v},\pi_{u';v}$ that $\bm{\pi}$ satisfies the strong Markov property.
    Therefore the pairs $(\pi_{u;v}^-, \pi_{u';v}^-)$ and $(\pi_{u;v}^+,\pi_{u';v}^+)$ are conditionally independent given $W$.
    Moreover,  $\pi_{u;v}^+$ and $\pi^+_{u';v}$ are conditionally independent given $\GG_\tau$, with the same conditional law (the polymer measure $\Q^{W;v}$). 
    Let $\wt\pi_{u;v}$ be the path $u\to v$ obtained from $\pi_{u;v}$ by replacing $\pi_{u;v}^+$ with $\pi_{u';v}^+$.
    By construction $\wt\pi_{u;v}$ lies to the left of $\pi_{u';v}$, and the preceding discussion ensures that the joint law of $(\wt\pi_{u;v}, \pi_{u';v})$ has marginals $\Q^{u;\,v}, \Q^{u';\,v}$.
    This is illustrated on the left side of \cref{fig:coalescence}.
    
    Assume now $v\ne v'$.
    Fix a sample $(\wt\pi_{u;v}, \pi_{u';v})$ from the coupling constructed above, as well as an independent sample $\pi_{u';v'}$.
    By reversibility of the random walk, we can view $\pi_{u';v'}$ as a sample from $\Q^{v';\,u'}$, the polymer measure defined in terms of paths $v'\to u'$ with steps in $\{(1,-1),(-1,-1)\}$.
    We can similarly view $\pi_{u';v}$ as a sample from $\Q^{v';\,u}$.
    The argument of the previous paragraph yields a path $\wt\pi_{u';v'}$ such that
    \begin{itemize}
        \item $\wt\pi_{u';v'}$ lies to the right of $\pi_{u';v}$,
        \item $\wt\pi_{u';v'}\cap\pi_{u';v}$ is connected, and
        \item the joint law of $(\wt\pi_{u';v'},\, \pi_{u';v})$ has marginals $\Q^{u';\,v'}, \Q^{u';\,v}$.
    \end{itemize} 
    By averaging over $\pi_{u';v}$, we conclude that $(\wt\pi_{u;v},\, \wt\pi_{u';v'})$ is a coupling of $\Q^{u;\,v}, \Q^{u';\,v'}$ under which properties \ref{one} and \ref{two} above hold (see \cref{fig:coalescence}).
\end{proof}

\begin{figure}[th]
    \centering
    
    \begin{subfigure}[t]{0.3\textwidth}
        \centering
        \begin{tikzpicture}[scale=3,
        dot/.style = {circle, draw=black, fill, minimum size=#1,
                      inner sep=0pt, outer sep=0pt},
        dot/.default = 4pt
        ]  
    
        \begin{pgfonlayer}{dots}
            \node[dot,anchor=center,label=below left:$u$](u) at (0,0)    {};
            \node[dot,anchor=center,label=above left:$v$](v) at (0,2)    {};
            \node[dot,anchor=center,label=below right:$u'$](u') at (0.6,0)   {};
            \node[dot=0pt,anchor=center, above right=of u] (hit1) {};
            \node[dot=0pt,anchor=center,below right=of v] (hit2) {};
        \end{pgfonlayer}
        
        \draw[color={DarkOrchid2},ultra thick] (u) .. controls  ++(0.3,0.3) and ++(0.05, -0.2) .. (hit1) node[pos=0.3,label=left:$\pi^-_{u;v}$]{};
        \draw[color={DarkOrchid2},ultra thick] (hit1) .. controls  ++(-0.2,0.9) and ++(0.1,-0.4) .. (hit2);
        \draw[color={DarkOrchid2},ultra thick] (hit2) .. controls  ++(-0.1,0.3) and ++(0,-0.2) .. (v);
    
        \draw[color={Chartreuse3},ultra thick,transform canvas={xshift=1.5pt}] (u')++(-0.01,0) .. controls  ++(0,0.3) and ++(0.05,-0.2) .. (hit1.center) node[pos=0.5,label=right:$\pi^-_{u';v}$]{};
        \draw[color={Chartreuse3},ultra thick,transform canvas={xshift=1.5pt}] (hit1) .. controls  ++(-0.2,0.9) and ++(0.1,-0.4) .. (hit2)
        node[pos=0.4,label=right:$\pi^+_{u';v}$]{};
        \draw[color={Chartreuse3},ultra thick,transform canvas={xshift=1.5pt}] (hit2) .. controls  ++(-0.075,0.3) and ++(-0.05,-0.175) .. (v.center);
    
        \draw[color=black, thin] (hit1.east) -- ++(0.3,0.1) node[right]{$W$};
        \end{tikzpicture}
    \end{subfigure}
    ~
    \begin{subfigure}[t]{0.3\textwidth}
        \centering
        \begin{tikzpicture}[scale=3,
            dot/.style = {circle, draw=black, fill, minimum size=#1,
                          inner sep=0pt, outer sep=0pt},
            dot/.default = 4pt
            ]  
        
            \begin{pgfonlayer}{dots}
                \node[dot=0pt,anchor=center](u) at (0,0)    {};
                \node[dot,anchor=center,label=above left:$v$](v) at (0,2)    {};
                \node[dot,anchor=center,label=below right:$u'$](u') at (0.6,0)   {};
                \node[dot,anchor=center,label=above right:$v'$](v') at (0.5,2)  {};
                \node[dot=0pt,anchor=center, above right=of u] (hit1) {};
                \node[dot=0pt,anchor=center,below right=of v] (hit2) {};
            \end{pgfonlayer}
        
            \draw[color={DarkOrange1},ultra thick,transform canvas={xshift=1.5pt}] (u')++(-0.01,0) .. controls  ++(0,0.3) and ++(0.05,-0.2) .. (hit1);
            \draw[color={DarkOrange1}, ultra thick,transform canvas={xshift=1.5pt}] (hit1) .. controls  ++(-0.2,0.9) and ++(0.1,-0.4) .. (hit2);
            \draw[color={DarkOrange1},ultra thick,transform canvas={xshift=1.5pt}] (v')++(-0.01,0) .. controls  ++(-0.1,-0.3) and ++(-0.05,0.2).. (hit2)
            node[pos=0.4,label=right:$\pi^-_{v';u'}$]{};
        
            \draw[color={Chartreuse3},ultra thick,line cap=round] (u') .. controls  ++(-0.03,0.285) and ++(0.05,-0.21) .. (hit1);
            \draw[color={Chartreuse3},ultra thick] (hit1) .. controls  ++(-0.2,0.9) and ++(0.1,-0.4) .. (hit2)
            node[pos=0.4,label=left :$\pi^+_{v;u'}$]{};
            \draw[color={Chartreuse3},ultra thick,line cap=round] (hit2) .. controls  ++(-0.1,0.3) and ++(0,-0.2) .. (v)
            node[pos=0.8,label=below:$\pi^-_{v;u'}$]{};
        \end{tikzpicture}
    \end{subfigure}
    ~
    \begin{subfigure}[t]{0.3\textwidth}
        \centering
        \begin{tikzpicture}[scale=3,
            dot/.style = {circle, draw=black, fill, minimum size=#1,
                          inner sep=0pt, outer sep=0pt},
            dot/.default = 4pt
            ]  
        
            \begin{pgfonlayer}{dots}
                \node[dot,anchor=center,label=below left:$u$](u) at (0,0)    {};
                \node[dot,anchor=center,label=above left:$v$](v) at (0,2)    {};
                \node[dot,anchor=center,label=below right:$u'$](u') at (0.6,0)   {};
                \node[dot,anchor=center,label=above right:$v'$](v') at (0.5,2)  {};
                \node[dot=0pt,anchor=center, above right=of u] (hit1) {};
                \node[dot=0pt,anchor=center,below right=of v] (hit2) {};
            \end{pgfonlayer}

            \draw[color={DarkOrchid2},ultra thick] (u) .. controls  ++(0.3,0.3) and ++(0.05, -0.2) .. (hit1);
            \draw[color={DarkOrchid2},ultra thick] (hit1) .. controls  ++(-0.2,0.9) and ++(0.1,-0.4) .. (hit2)
            node[pos=0.5,label=left :$\wt{\pi}_{u;v}$]{};
            \draw[color={DarkOrchid2},ultra thick] (hit2) .. controls  ++(-0.1,0.3) and ++(0,-0.2) .. (v);
        
            \draw[color={DarkOrange1},ultra thick,transform canvas={xshift=1.5pt}] (u')++(-0.01,0) .. controls  ++(0,0.3) and ++(0.05,-0.2) .. (hit1);
            \draw[color={DarkOrange1}, ultra thick,transform canvas={xshift=1.5pt}] (hit1) .. controls  ++(-0.2,0.9) and ++(0.1,-0.4) .. (hit2)
            node[pos=0.4,label=right :$\wt{\pi}_{u';v'}$]{};
            \draw[color={DarkOrange1},ultra thick,transform canvas={xshift=1.5pt}] (v')++(-0.01,0) .. controls  ++(-0.1,-0.3) and ++(-0.05,0.2).. (hit2);
        \end{tikzpicture}
    \end{subfigure}

        \caption{\textbf{Left:} The polymers $\pi_{u;v}$ (purple) and $\pi_{u';v}$ (green) begin their respective journeys at $u$ and $u'$.
            Their first intersection point is labeled $W$.
            The trajectory of $\pi_{u;v}$ after $W$ has been replaced by the trajectory of $\pi_{u';v}$ after $W$. \protect\footnotemark\;
            \textbf{Middle:} The green path $\pi_{u';v}$ appearing in the left figure is identified with a sample $\pi_{v;u'}$ (also green) from the time-reversed polymer measure.
            The polymer $\pi_{v';u'}$ (orange) begins its journey at $v'$ and at some (unlabeled) point hits $\pi_{v;u'}$.
            The remaining trajectory of $\pi_{v';u'}$ has been replaced by that of $\pi_{v;u'}$. \textsuperscript{\ref{foot}}\;
            \textbf{Right:} We superimpose the left and middle figures, omitting the green path $\pi_{u';v}=\pi_{v;u'}$.
            The desired coupling is obtained by identifying the orange path with its time-reversal $u'\to v'$ and averaging over the (now hidden) green path. \textsuperscript{\ref{foot}}}
        \label{fig:coalescence}
\end{figure}


\subsection{Correspondence between positive temperature and zero temperature}\label{sec:pos-zero}
Much of our analysis will apply simultaneously to the polymer free energy and the last passage time.
Let us make explicit the relationship between the two.

For $\beta>0$ (to be thought of as inverse temperature), consider the partition function $Z_\beta\coloneqq \sum_\pi e^{\beta H(\pi)}$, the free energy $F_\beta\coloneqq \beta^{-1}\log Z_\beta$, and the polymer measure $\Q_\beta(\{\pi\})\coloneqq  Z_\beta^{-1} e^{\beta H(\pi)}$.
In the zero temperature limit $\beta\to\infty$, we have $F_\beta\to L$ and (formally) $\Q_\beta\to\d_{\{\G\}}$, the Dirac mass on the left-most geodesic $\G$.

\footnotetext{In \cref{fig:coalescence} each pair of paths is drawn slightly shifted so as to make both visible, even when they have coalesced. This is unrealistic, but is intended to clarify the dual roles played by the paths in the proof of \cref{lem:polymer_ordering}.\label{foot}}

We have omitted $\beta$ from our definitions in \cref{sec:model}, as it can be absorbed into the weights $\bkwt,\vtwt$.
Accordingly, we replace the formal zero temperature limit with the following ``tropicalization'' correspondence: for any set of paths $A$ and any $c\in[0,1)$, we write
\begin{equation}\label{eq:dictionary}
    \begin{split}
        \Q \quad&\longmapsto\quad \d_{\{\G\}},\\
        \{\o:\Q(A)>c\} \quad &\longmapsto \quad
        \{\o:\text{ the left-most geodesic } \G\in A\},\\
        \log\left(\sum_{\pi\in A}e^{H(\pi)}\right)\quad &\longmapsto \quad \sup_{\pi\in A} H(\pi),\\
        Z\quad &\longmapsto \quad e^{L}.
        \end{split}
\end{equation}
We will use the above dictionary to streamline our presentation in the following manner.
All the results below apply to the zero and positive temperature models simultaneously, and in our proofs we always treat the positive temperature model first.
We will then be able to convert the proof of the positive temperature statement into a proof of the zero temperature statement by formally replacing all instances of the symbols on the left side of \eqref{eq:dictionary} that appear in the positive temperature proof with the corresponding symbols on the right side of \eqref{eq:dictionary}.
With that said, merely appealing to this dictionary in the more complicated proofs (especially those in \cref{sec:approx_indep}) would demand an unreasonable amount of bookkeeping from the reader---in such cases we present the details of the zero temperature argument, referencing the dictionary to expedite the discussion when appropriate.


\section{Pinning}\label{sec:pinning}
In this section we prove \cref{thm:pinning}, thereby establishing that LLN separation implies the pinning of the polymer to $\V$.

Fix $t_1<t_2$ and $x_1,x_2\ge 0$.
We denote by $\Exc(x_1,t_1;x_2,t_2)$ the set of \emph{excursions} $(x_1,t_1)\to(x_2,t_2)$, i.e. paths that do not hit $\V$ (unless $x_1=0$ or $x_2=0$):
\[
    \Exc(x_1,t_1;x_2,t_2)\coloneqq \bigl\{\pi\in\Pi(x_1,t_1;x_2,t_2):\pi \cap \V \subset \{(0,t_1),(0,t_2)\}\bigr\}.
\]
The following lemma asserts that, under LLN separation, excursions typically are not competitive with paths that hit $\V$.

\begin{lemma}[Excursions are rare]\label{lem:pin_lemma}
    There exist constants $C,C'>0$ and $k_0\ge1$ depending only on the law of $\o$ such that the following holds.
    Fix $G\in\{F,L\}$. Suppose the polymer model has LLN separation, i.e. $\gap>0$.
    Fix $(x_1,t_1), (x_2,t_2)$ satisfying $t_2-t_1 \ge k_0(x_1+x_2+1)$ and $\Pi(x_1,t_1;x_2,t_2)\ne\varnothing$.
    If $G=F$ then
    \begin{equation}\label{eq:pin1}
        \P[\Q^{(x_1,t_1;\,x_2,t_2)}\left(\Exc(x_1,t_1;x_2,t_2)\right) > 
        e^{-\gap(|t_2-t_1|+|x_2-x_1|)}
        ]\le C\exp\left(-C'|t_2-t_1|^{1/3}\right).
    \end{equation}
    If $G=L$ and we denote by $\G$ the leftmost geodesic $(x_1,t_1)\to(x_2,t_2)$, then
    \begin{equation}\label{eq:pin2}
        \P\bigl(\G\in\Exc(x_1,t_1;x_2,t_2)\bigr)\le C\exp\left(-C'|t_2-t_1|^{1/3}\right).
    \end{equation}
\end{lemma}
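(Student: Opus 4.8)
The plan is to prove the statement for $G=F$ and deduce the $G=L$ case via the tropicalization dictionary of \cref{sec:pos-zero} (replacing free energies by last passage times and $\{\Q(A)>c\}$ by $\{\text{the leftmost geodesic lies in }A\}$). Write $u=(x_1,t_1)$, $v=(x_2,t_2)$, $T\coloneqq t_2-t_1$, $X\coloneqq x_1+x_2$, $\Delta\coloneqq x_2-x_1$, $N\coloneqq T-X$; the hypothesis $T\ge k_0(X+1)$ forces $|\Delta|\le X<T/k_0$ and $N\ge k_0$, so the ``excursion direction'' $(\Delta,T)$ is nearly vertical and in particular lies in the cone $D_\f$. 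One may assume $\Exc(u;v)\ne\varnothing$, since otherwise \eqref{eq:pin1} is vacuous. The geometric heart of the argument is to bound $\Q^{(u;v)}(\Exc(u;v))=e^{F^\exc(u;v)-F(u;v)}$ from above by pitting excursions against \emph{highway} paths: those that run straight from $u$ to $(0,t_1+x_1)$, then from $(0,t_1+x_1)$ to $(0,t_2-x_2)$ within $\H$, then straight to $v$. Restricting the partition function $Z(u;v)$ to highway paths and using that all weights are nonnegative (\cref{sec:model}\ref{P1}) gives $F(u;v)\ge F(0,t_1+x_1;0,t_2-x_2)-\Xi_\hwy$, where $\Xi_\hwy$ collects the weights at the (at most two) points where a highway meets $\V$. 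On the other hand, an excursion meets $\V$ only at its endpoints, so $F^\exc(u;v)\le F^\bk(u;v)+\Xi_\exc$, where $\Xi_\exc$ collects the endpoint weights on $\V$. Hence
\[
    \Q^{(u;v)}(\Exc(u;v)) \;\le\; e^{\,F^\bk(u;v)\,-\,F(0,t_1+x_1;0,t_2-x_2)\,+\,\Xi},\qquad \Xi\coloneqq\Xi_\exc+\Xi_\hwy,
\]
where $\Xi$ is a sum of at most four nonnegative terms, each stochastically dominated by a copy of $\vtwt$ and hence subexponential (\cref{sec:model}\ref{P2}).

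The second step is to control expectations. By monotonicity $F^\bk(u;v)\le F^\bk_\f(u;v)$, and superadditivity gives $\bE[F^\bk_\f(u;v)]\le g^\bk_\f(\Delta,T)+O(1)$; combining this with homogeneity and \cref{lem:vertical_maximizes_g} yields $\bE[F^\bk(u;v)]\le g^\bk(T+|\Delta|)+O(1)$. Since $N\ge k_0$ is large and $g=\lim_n n^{-1}\bE[F(0,0;0,n)]$, we have $\bE[F(0,t_1+x_1;0,t_2-x_2)]=\bE[F(0,0;0,N)]\ge(g-\gap)N$. Substituting $g^\bk=g-5\gap$ (this is LLN separation, see \eqref{eq:gap}), $|\Delta|<T/k_0$, and $N>T(1-1/k_0)$, a short computation shows that if $k_0$ is chosen large enough—depending only on $g$, $\gap$, the $O(1)$ constant above, and the rate of convergence of $n^{-1}\bE[F(0,0;0,n)]$ to $g$, all determined by the law of $\o$—then $\bE[F^\bk(u;v)-F(0,t_1+x_1;0,t_2-x_2)]\le-3\gap T$.

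The final step combines this with concentration. Using $-\gap(T+|\Delta|)\ge-2\gap T$, it suffices to bound $\P\bigl([F^\bk(u;v)-F(0,0;0,N)]+\Xi>-2\gap T\bigr)$. On this event, either one of the $\le 4$ summands of $\Xi$ exceeds $\tfrac18\gap T$—which by subexponentiality has probability $\le Ce^{-cT}\le Ce^{-C'T^{1/3}}$—or $F^\bk(u;v)-F(0,0;0,N)>-\tfrac52\gap T$, which by the expectation bound forces one of $F^\bk(u;v)$, $F(0,0;0,N)$ to deviate from its own mean by at least $\tfrac14\gap T$. Since $T\ge N\ge T/2$ and $T,N\ge k_0$, \cref{lem:large_dev} (in the strengthened form of \cref{rem:suboptimal_large_dev}, applied with $\gap/4$ in place of $\gap$) bounds each such deviation probability by $Ce^{-C'T^{1/3}}$. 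Summing the $O(1)$-many contributions proves \eqref{eq:pin1}, and \eqref{eq:pin2} then follows via the dictionary of \cref{sec:pos-zero}. I expect the main obstacle to be organizational rather than conceptual: one must fix the \emph{single} constant $k_0$ so that it simultaneously dominates the shape-function slack ($\asymp g/\gap$), the four subexponential error terms, the convergence-rate slack in the definition of $g$, and the threshold $c_0$ from \cref{lem:large_dev}; and one must keep track of the fact that $T\ge k_0(X+1)$ is precisely what keeps $(\Delta,T)\in D_\f$, so that \cref{lem:vertical_maximizes_g} applies.
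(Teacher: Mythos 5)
Your proposal is correct and takes essentially the same route as the paper's proof: bound the excursion free energy above by the bulk free energy (plus endpoint wall weights), bound the full free energy below by that of paths diving to and running along $\V$ (using positivity of the weights and superadditivity), open a linear gap in expectations via LLN separation together with \cref{lem:vertical_maximizes_g}, and conclude with the large-deviations estimate \cref{lem:large_dev} and a union bound, transferring to $G=L$ via the dictionary of \cref{sec:pos-zero}. The only differences are organizational: the paper splits into cases according to whether $x_1,x_2$ vanish, which removes the need for your $\Xi$ bookkeeping, whereas you treat all cases uniformly with at most four subexponential wall-weight corrections and invoke the strengthened form of \cref{lem:large_dev} noted in \cref{rem:suboptimal_large_dev} with $\gap/4$ in place of $\gap$.
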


\begin{proof}
    We will prove \eqref{eq:pin1}.
    The correspondence of \cref{sec:pos-zero} will immediately yield \eqref{eq:pin2}.

    Let us first record two properties of the bulk LLN $f^\bk$.
    By superadditivity, \eqref{eq:asymp_free_energy_bulk}, and \cref{lem:vertical_maximizes_g}, we have
    \begin{equation}\label{eq:superadd}
        \E[F^\bk(x_1,t_1;x_2,t_2)]\le f^\bk\cdot(|t_2-t_1|+|x_2-x_1|).
    \end{equation}
    By LLN separation \eqref{eq:DEF_lln_separation}, \eqref{eq:gap}, there exists $k_0'>0$ such that
    \begin{equation}\label{eq:lln_separation}
        \bE[F(0,0;0,t)]\ge (f^\bk+4\gap)\,t\quad\text{for all even } t\ge k_0'.
    \end{equation}

    We now set $k_0\coloneqq k_0'+c_0+2$, where $c_0$ is from \cref{lem:large_dev}.
    By vertical translation-invariance, it suffices to prove \eqref{eq:pin1} for $\Q^{(x_1,0;\,x_2,t)}$ with $t,x_1,x_2\ge0$ satisfying $t\ge k_0(x_1 + x_2+1)$.

    For simplicity we first treat the case $x_1=x_2=0$.
    We suppress the superscripts from the polymer measure and rewrite the left side of \eqref{eq:pin1} as
    \begin{equation}\label{eq:poly_prob_free_energy}
        \begin{split}
            \P\bigl(\Q(\Exc(0,0;0,t)) > e^{-\gap t}\bigr)
            &=\P\bigl(\log \Q(\Exc(0,0;0,t)) > -\gap t\bigr)\\
            &=\P\bigl(F^\exc(0,0;0,t) > F(0,0;0,t) - \gap t\bigr),
        \end{split}
    \end{equation}
    where we define $F^\exc(x_1,0;x_2,t) \coloneqq \log\left(\sum_{\pi\in\Exc(x_1,0;x_2,t)}e^{H(\pi)}\right)$.
    Since a path $\pi\in\Exc(0,0;0,t)$ only collects bulk weights $\o^\bk(x,s)$ (except at its endpoints $\pi(0),\pi(t)$), we have the estimate
    \[
        F^\exc(0,0;0,t) \le F^\bk(1,1;1,t-1) + \o(0,0)+\o(0,t).
    \]
    On the other hand, we have that $F(0,0;0,t) = F(1,1;1,t-1) + \o(0,0)+\o(0,t)$.
    Write $F\coloneqq F(1,1;1,t-1)$ and $F^\bk\coloneqq F^\bk(1,1;1,t-1)$.
    By substituting the above display into \eqref{eq:poly_prob_free_energy} and applying \eqref{eq:superadd}, \eqref{eq:lln_separation}, and \cref{lem:large_dev} (note that $t-2\ge k_0-2 \ge c_0$), we conclude that
    \begin{equation}\label{eq:lln_estimate}
        \begin{split}
            \P\bigl(\Q(\Exc(0,0;0,t)) > e^{-\gap t}\bigr)
            &\le \P[F< F^\bk + \gap t]\\
            &\le \P[F < (f^\bk+\gap)(t-2)  + \gap t]\\
            &\qquad\quad+ \P[F^\bk > (f^{\bk} + \gap)(t-2)]\\
            &\le \P\bigl(F < \bE[F]-2\gap (t-2) + 2\gap\bigr)
            \,+ C\exp\bigl(-C't^{1/3}\bigr)\\
            &\le \P\bigl(F < \bE[F]-\gap (t-2)\bigr)
            \,+ C\exp\bigl(-C't^{1/3}\bigr)\\
            &\le C\exp\bigl(-C't^{1/3}\bigr).
        \end{split}
    \end{equation}
    Here we absorbed the factor $\min\{1,\gap^2\}$ from \cref{lem:large_dev} into the constant $C'$.

    Suppose now that $x_1>0$ and $x_2>0$.
    It follows that
    \[
        F^\exc(x_1,0;x_2,t) \le F^\bk(x_1,0;x_2,t).
    \]
    On the other hand, by superadditivity and the fact that the free energies are positive almost surely,
    \[
        \begin{split}
            F(x_1,0;x_2,t) &\ge F(x_1,0;1,x_1-1) + F(0,x_1;0,t-x_2) + F(1, t-x_2+1; x_2,t)\\
            &\ge F(0,x_1;0,t-x_2).
        \end{split}
    \]
    We also note that by applying \eqref{eq:lln_separation} and increasing $k_0$ as needed, we can assume that
    \[
        (f^\bk+2\gap)(t+|x_2-x_1|)\le \bE[F(0,x_1;0,t-x_2)] - \gap(t-x_2-x_1)\quad\text{whenever}\quad t\ge k_0(x_1+x_2+1).
    \]
    In combination with the above three displays, the condition $\Q(\Exc(x_1,0;x_2,t))>e^{-\gap(t+|x_2-x_1|)}$ implies a comparison of 
    $F^\bk\coloneqq F^\bk(x_1,0;x_2,t)$ and $F\coloneqq F(0,x_1;0,t-x_2)$, as in \eqref{eq:poly_prob_free_energy}.
    We obtain, as in \eqref{eq:lln_estimate},
    \[
        \begin{split}
            \P[\Q(\Exc(x_1,0;x_2,t)) > e^{-\gap(t+|x_2-x_1|)}]
            &\le \P[F < F^{\bk} +\gap(t+|x_2-x_1|)]\\
            &\le \P[F<(f^\bk + 2\gap)(t+|x_2-x_1|)]\\
            &\qquad\qquad + \P[F^\bk > (f^\bk+\gap)(t+|x_2-x_1|)]\\
            &\le \P\bigl(F<\bE[F] - \gap(t-x_2-x_1)\bigr)
            + C\exp\bigl(-C't^{1/3}\bigr)\\
            &\le C\exp\bigl(-C'(t-x_2-x_1)^{1/3}\bigr)\\
            &\le C\exp\bigl(-C't^{1/3}\bigr).
        \end{split}
    \]
    
    The cases $x_1=0,x_2>0$ and $x_1>0,x_2=0$ follow from a straightforward combination of the previous two arguments.
    We omit the details.
\end{proof}

We now deduce \cref{thm:pinning} from \cref{lem:pin_lemma}.

\begin{proof}[Proof of \cref{thm:pinning}]
    We will treat the case $G=F$.
    The case $G=L$ will then follow from the correspondence of \cref{sec:pos-zero}.

    Fix $s_1,s_2\in\lb x_1+1,\,t-x_2-1\rb$ with $s_2-s_1\ge k_0$.
    Notice that if a path $\pi:(x_1,0)\to(x_2,t)$ does not intersect $\V_{\lb s_1,\,s_2\rb}$, then there exist $a\in\lb 0,\,s_1-1\rb$ and $b\in\lb s_2+1,\,t\rb$ such that 
    \[
        \pi\cap\H_{\lb a,b\rb} \in \Exc(\pi(a),a;\pi(b),b).
    \]
    We denote by $a_\pi$ the minimal such $a$, and by $b_\pi$ the maximal such $b$.
    Observe that $\pi(a_\pi)\in\{0,x_1\}$ and $\pi(b_\pi)\in\{0,x_2\}$.
    Moreover, $\pi(a_\pi)\ne 0$ only if $a_\pi=0$, and $\pi(b_\pi)\ne 0$ only if $b_\pi=t$.
    The idea now is to perform a union bound over the possible pairs $(a_\pi,b_\pi)$ and apply \cref{lem:pin_lemma} to control the tails.
    To this end we record the following estimates for the relevant polymer measures.
    
    Suppose $0<a\le b<t$. From the inequality
    \[
        Z(x_1,0;x_2,t)
        \ge Z(x_1,0;1,a-1)\, Z(0,a;0,b)\,Z(b+1,1;x_2,t),
    \]
    we deduce that
    \[
        \Q^{(x_1,0;\,x_2,t)}\left(\pi\cap\H_{\lb a,\,b\rb}\in\Exc(0,a;0,b)\right) \le \Q^{(0,a;\,0,b)}\left(\Exc(0,a;0,b)\right).
    \]
    A similar argument shows that
    \[
        \Q^{(x_1,0;\,x_2,t)}\left(\pi\cap\H_{\lb 0,\,b\rb}\in\Exc(x_1,0;0,b)\right) \le \Q^{(x_1,0;\,0,b)}\left(\Exc(x_1,0;0,b)\right),
    \]
    and that
    \[
        \Q^{(x_1,0;\,x_2,t)}\left(\pi\cap\H_{\lb a,\,t\rb}\in\Exc(0,a;x_2,t)\right) \le \Q^{(0,a;\,x_2,t)}\left(\Exc(0,a;x_2,t)\right).
    \]

    Now we perform the union bound.
    In particular, observe that by choosing $\d=\d(k_0,\gap)>0$ sufficiently small and $C''=C''(k_0,\gap)>0$ sufficiently large
    (each depending only on $k_0, \gap$), we can ensure that the following estimate holds for any $s_1,s_2$ satisfying the above hypotheses:
    \[
        \begin{split}
            \P\biggl(\Q^{(x_1,0;\,x_2,t)}\bigl(\pi\cap\V_{\lb s_1,\,s_2\rb}&=\varnothing\bigr)
            >C'' e^{-\d\gap |s_2-s_1|}\biggr)\\
            &\le \sum_{\substack{a<s_1,\\b>s_2}}
            \P[\Q^{(0,a;\,0,b)}(\Exc(0,a;0,b))>e^{-\gap|b-a|}]\\
            &\quad + \sum_{a<s_1}\P[\Q^{(0,a;\,x_2,t)}\left(\Exc(0,a;x_2,t)\right)>e^{-\gap (|t-a| + x_2)}]\\
            &\quad +\sum_{b>s_2}\P[\Q^{(x_1,0;\,0,b)}\left(\Exc(x_1,0;0,b)\right)>e^{-\gap (b+x_1)}]\\
            &\quad + \P[\Q^{(x_1,0;\,x_2,t)}(\Exc(x_1,0;x_2,t))>e^{-\gap (t+|x_2-x_1|)}].
        \end{split}
    \]
    Applying \cref{lem:pin_lemma} to the terms on the right side of the above display yields \cref{thm:pinning}.
\end{proof}

We conclude this section by recording two consequences of \cref{thm:pinning} that will be useful later.
First is the following lemma, which asserts that the polymer has $O(1)$ transversal fluctuations under LLN separation. It is essentially duplicated from \cref{rem:reformulating_pinning}, and we omit the proof. 
\begin{lemma}[$O(1)$ transversal fluctuations]\label{lem:O1fluc}
    There exist constants $C,C',C'',C'''>0$ and $k_0\ge1$ depending only on the law of $\o$ such that the following hold.
    Fix $G\in\{F,L\}$ and suppose the polymer model has LLN separation.
    Fix $t,x_1,x_2\ge0$ satisfying $t\ge k_0(x_1+x_2+1)$ and $\Pi(x_1,0;x_2,t)\ne\varnothing$.
    Also fix $s\in\lb x_1,\,t-x_2\rb$.
    If $G=F$ then for all $k\ge k_0$,
    \[
        \P[\Q^{(x_1,0;\,x_2,t)}(\pi(s)>k)>C''e^{-C'''k}] \le C\exp\left(-C'k^{1/3}\right).
    \]
    If $G=L$ and we denote by $\G$ the left-most geodesic $(x_1,0)\to(x_2,t)$, then for all $k\ge k_0$,
    \[
        \P(\G(s)>k) \le C\exp\left(-C'k^{1/3}\right).
    \]
\end{lemma}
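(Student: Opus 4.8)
The plan is to read \cref{lem:O1fluc} off from \cref{thm:pinning} via the elementary reduction indicated in \cref{rem:reformulating_pinning}. The only input specific to this lemma is that every path is $1$-Lipschitz in time: if $\pi:(x_1,0)\to(x_2,t)$ satisfies $\pi(s)>k$, then $\pi(r)\ge\pi(s)-|r-s|>0$ for all $r$ with $|r-s|\le k$, so $\pi$ is disjoint from $\V_{\lb s-k,\,s+k\rb}$. I would therefore set $s_1:=\max\{x_1+1,\,s-k\}$ and $s_2:=\min\{t-x_2-1,\,s+k\}$, so that (for every fixed environment) $\{\pi(s)>k\}\subseteq\{\pi\cap\V_{\lb s_1,\,s_2\rb}=\varnothing\}$ as events on paths, and likewise $\{\G(s)>k\}\subseteq\{\G\cap\V_{\lb s_1,\,s_2\rb}=\varnothing\}$ for the left-most geodesic $\G$; in particular $\Q^{(x_1,0;\,x_2,t)}(\pi(s)>k)\le\Q^{(x_1,0;\,x_2,t)}(\pi\cap\V_{\lb s_1,\,s_2\rb}=\varnothing)$.

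Next I would verify that $s_1,s_2$ satisfy the hypotheses of \cref{thm:pinning}. That $s_1,s_2\in\lb x_1+1,\,t-x_2-1\rb$ is immediate from $s\in\lb x_1,\,t-x_2\rb$, $k\ge1$, and $t\ge k_0(x_1+x_2+1)$ (which gives $x_1+1\le t-x_2-1$ once $k_0\ge2$). For the lower bound $s_2-s_1\ge k_0$ I would use the identity $s_2-s_1=\min\{2k,\ s+k-x_1-1,\ t-x_2-1-s+k,\ t-x_1-x_2-2\}$: the first term is $2k$, the next two are each $\ge k-1$ since $x_1\le s\le t-x_2$, and the last is bounded below by combining $t\ge k_0(x_1+x_2+1)$ with the constraint $x_1+x_2+t\ge2k+2$ (a consequence of $1$-Lipschitzness that holds whenever some path in $\Pi(x_1,0;x_2,t)$ attains $\pi(s)>k$; if no such path exists the lemma holds trivially). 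The result is $s_2-s_1\ge c\,k$ for an absolute constant $c>0$, so enlarging $k_0$ in \cref{lem:O1fluc} to exceed a fixed multiple of the constant $k_0$ of \cref{thm:pinning} (and to absorb the additive $+2$ in the latter's hypothesis on $t$) guarantees $s_2-s_1\ge k_0$.

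With these checks in place, the $G=F$ case follows by feeding $s_1,s_2$ into \eqref{eq:pinning-thm1}: since $C''e^{-C'''|s_2-s_1|}\le C''e^{-C'''c k}$, the event $\{\Q^{(x_1,0;\,x_2,t)}(\pi(s)>k)>C''e^{-C'''c k}\}$ is contained in $\{\Q^{(x_1,0;\,x_2,t)}(\pi\cap\V_{\lb s_1,\,s_2\rb}=\varnothing)>C''e^{-C'''|s_2-s_1|}\}$, whose $\P$-probability is at most $C\exp(-C'|s_2-s_1|^{1/3})\le C\exp(-C'(ck)^{1/3})$; renaming the constants (which still depend only on the law of $\o$) gives the stated bound. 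The $G=L$ case is identical using \eqref{eq:pinning-thm2}, or equivalently follows by the correspondence of \cref{sec:pos-zero}. I expect the only mildly delicate point to be the uniform bound $s_2-s_1\asymp k$ in the second paragraph; the rest is a direct application of \cref{thm:pinning}.
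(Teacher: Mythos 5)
Your argument is correct and is exactly the route the paper intends: the paper omits the proof, pointing to \cref{rem:reformulating_pinning}, whose content is precisely your observation that $1$-Lipschitzness forces a path with $\pi(s)>k$ to avoid a vertical segment of length $\asymp k$, after which \cref{thm:pinning} applies. Your bookkeeping for general $x_1,x_2,s$ (in particular the bound $s_2-s_1\ge k/2$ obtained by combining $t\ge k_0(x_1+x_2+1)$ with $x_1+x_2+t\ge 2k+2$, and enlarging the lemma's $k_0$ relative to the theorem's) checks out.
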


The next lemma is the result of performing a union bound in the conclusion of \cref{thm:pinning}.
\begin{lemma}[The polymer hits $\V$ near the beginning and end of its journey]\label{lem:no_avoiding_initial_segment}
    There exist constants $C,C',C'',C'''>0$ and $k_0\ge1$ depending only on the law of $\o$ such that the following hold.
    Fix $G\in\{F,L\}$ and suppose the polymer model has LLN separation.
    Fix $t,x\ge1$ satisfying $t\ge 2k_0(x+1)$ and $\Pi(x,0;x,t)\ne\varnothing$.
    Fix also $k\in\lb x+k_0,\,t-x-k_0\rb$.
    If $G=F$ then
    \[
        \P[\Q^{(x,0;\,x,t)}
        \left(\{
            \pi:\pi\cap\V_{\lb 0,k\rb}=\varnothing\ \ \text{or}\ \ \pi\cap\V_{\lb t-k, t\rb}=\varnothing
        \}\right) >
        C''e^{-C''' k}]
        \le C\exp(-C'k^{1/3}).
    \]
    If $G=L$ and we denote by $\G$ the left-most geodesic $(x,0)\to(x,t)$, then
    \[
        \P[\G\cap\V_{\lb 0,\,k\rb}=\varnothing\ \ \text{or}\ \ \G\cap\V_{\lb t-k,\,t\rb}=\varnothing]
        \le C\exp(-C'k^{1/3}).
    \]
\end{lemma}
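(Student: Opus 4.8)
The plan is to deduce the lemma from \cref{lem:pin_lemma} by a union bound, following but simplifying the proof of \cref{thm:pinning}. I would treat $G=F$ and obtain $G=L$ from the correspondence of \cref{sec:pos-zero}. The vertical reflection $(\cdot,s)\mapsto(\cdot,t-s)$ preserves the law of $\o$ and the polymer measure $\Q^{(x,0;\,x,t)}$ while interchanging $\V_{\lb 0,k\rb}$ with $\V_{\lb t-k,t\rb}$, so together with the trivial bound $\Q(A\cup B)\le\Q(A)+\Q(B)$ this reduces the claim to showing
\[
    \P[\Q^{(x,0;\,x,t)}(\pi\cap\V_{\lb 0,k\rb}=\varnothing)>C''e^{-C'''k}]\le C\exp(-C'k^{1/3})
\]
for appropriate constants.

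I would then decompose the avoidance event according to the first time $b$ at which $\pi$ meets $\V$. Since $\pi(0)=x\ge1$ and $\pi$ avoids $\V_{\lb 0,k\rb}$, such a $b$ (if it exists) satisfies $k<b\le t-x$, the upper bound because $\pi$ must climb back to height $x$ by time $t$; in that case $\pi\cap\H_{\lb 0,b\rb}\in\Exc(x,0;0,b)$, while if no such $b$ exists then $\pi\in\Exc(x,0;x,t)$. The partition-function factorization $Z(x,0;x,t)\ge Z(x,0;0,b)\,Z(1,b+1;x,t)$ (concatenate at $(0,b)$, then take the forced step to $(1,b+1)$), used exactly as in the proof of \cref{thm:pinning}, then gives
\[
    \Q^{(x,0;\,x,t)}(\pi\cap\V_{\lb 0,k\rb}=\varnothing)
    \le \Q^{(x,0;\,x,t)}(\Exc(x,0;x,t))+\sum_{b=k+1}^{t-x}\Q^{(x,0;\,0,b)}(\Exc(x,0;0,b)).
\]

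Finally I would bound each term on the right via \cref{lem:pin_lemma}. The hypothesis $t\ge 2k_0(x+1)$ gives $t\ge k_0(x+x+1)$, so \cref{lem:pin_lemma} applies to the full-excursion term; since $t\ge k+x+k_0>k$, its threshold $e^{-\gap t}$ lies below $C''e^{-C'''k}$ (take $C'''=\gap$ and $C''\ge1$) and its tail $C\exp(-C't^{1/3})$ below $C\exp(-C'k^{1/3})$. For the sum, the quantitative constraints on $k$ and $t$ are arranged so that \cref{lem:pin_lemma} applies to every $\Exc(x,0;0,b)$ appearing (the vertical extent $b$ is large enough relative to the horizontal extent $x+1$), giving the $b$-th term the threshold $e^{-\gap(b+x)}$ and the tail $C\exp(-C'b^{1/3})$. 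On the event that all of these excursion probabilities respect their thresholds, the convergent geometric series $\sum_{b>k}e^{-\gap(b+x)}\le Ce^{-\gap k}$ yields $\Q^{(x,0;\,x,t)}(\pi\cap\V_{\lb 0,k\rb}=\varnothing)\le C''e^{-C'''k}$ --- crucially with no factor of $t$; on the complementary event the $\P$-probability is at most $C\exp(-C't^{1/3})+\sum_{b>k}C\exp(-C'b^{1/3})$, and writing $b=(b-k)+k$ and using concavity of $u\mapsto u^{1/3}$ (so that $b^{1/3}\gs(b-k)^{1/3}+k^{1/3}$) makes this last sum absolutely convergent and at most $C\exp(-C'k^{1/3})$. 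Relabeling constants completes the proof.

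I expect the main obstacle to be the bookkeeping in this union bound: checking that each excursion $\Exc(\cdot\,;\cdot)$ produced by the first-passage decomposition has vertical extent large enough relative to its horizontal extent for \cref{lem:pin_lemma} to apply --- exactly what the hypotheses $t\ge 2k_0(x+1)$ and $k\ge x+k_0$ are designed to guarantee --- and then ensuring that the sum of the resulting stretched-exponential tails telescopes to $C\exp(-C'k^{1/3})$ without accumulating a spurious factor of $t$, the same point that has to be handled (for a two-index sum) in the proof of \cref{thm:pinning}.
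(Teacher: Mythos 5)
Your overall route---reflect in the horizontal line at height $t/2$ to reduce to the bottom window, decompose over the first time $b$ at which $\pi$ meets $\V$, bound the $b$-th term by $\Q^{(x,0;\,0,b)}\bigl(\Exc(x,0;0,b)\bigr)$ via $Z(x,0;x,t)\ge Z(x,0;0,b)\,Z(1,b+1;x,t)$, and then invoke \cref{lem:pin_lemma} and sum---is sound, and it is essentially a re-run of the paper's proof of \cref{thm:pinning} specialized to this situation. The paper itself gets the lemma more cheaply: since a path started at abscissa $x$ cannot meet $\V$ before time $x$, avoiding $\V_{\lb 0,k\rb}$ (resp. $\V_{\lb t-k,t\rb}$) forces avoiding $\V_{\lb x+1,\,k\rb}$ (resp. $\V_{\lb t-k,\,t-x-1\rb}$), so one applies the already-proved conclusion of \cref{thm:pinning} to these two admissible windows and takes a union bound; no return to \cref{lem:pin_lemma} is needed.

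The genuine gap is exactly the step you wave off as bookkeeping: the hypotheses $t\ge2k_0(x+1)$ and $k\ge x+k_0$ do \emph{not} guarantee that \cref{lem:pin_lemma} applies to every $\Exc(x,0;0,b)$ in your sum. That lemma needs the vertical extent to dominate the horizontal data, namely $b\ge k_0(x+1)$, whereas your decomposition produces all $b>k$, and $k\ge x+k_0$ only yields $b\ge x+k_0+1$; for large $x$ the terms with $b\in\lb k+1,\,k_0(x+1)\rb$ are simply outside the scope of \cref{lem:pin_lemma}, and this cannot be patched by adjusting constants, since for such short windows the quenched excursion probability need not be exponentially small in $b+x$ at all (a polymer launched from abscissa $x$ typically makes first contact with $\V$ only at time $\kappa x$ for a model-dependent $\kappa>1$, so for $x\asymp k$ the avoidance probability is not $e^{-\Theta(k)}$). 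The same calibration issue is latent in the paper's one-line derivation, which naturally produces the exponent $k-x-1$ rather than $k$; the lemma is stated for, and used in (\cref{lem:hwy_reduction}, with $x=J^{1/2}$, $k=J^{3/4}$), the regime where $k\gtrsim k_0(x+1)$, and there both your argument and the paper's go through. So you should either restrict to that regime or, as the paper effectively does, prove the bound with exponent $k-x$ on the reachable windows---not assert that the stated hypotheses make every excursion term admissible. A separate minor slip: concavity of $u\mapsto u^{1/3}$ gives $b^{1/3}\le(b-k)^{1/3}+k^{1/3}$, the reverse of what you wrote; the estimate you actually need follows from $b^{1/3}\ge\max\{(b-k)^{1/3},k^{1/3}\}\ge\tfrac12\bigl((b-k)^{1/3}+k^{1/3}\bigr)$, after which your summation over $b$ is fine.
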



\section{Variance grows linearly}\label{sec:linear_variance}
In this section we show that $\Var(G(0,0;0,n))\asymp n$.

\subsection{Variance grows at least linearly}\label{sec:lower_bound}

To show that $\Var(G(0,0;0,n))\gs n$, we apply a general estimate due to \cite{NPDivergenceShapeFluctuations1995}.
Let us set up some notation.

Fix an integer $x_0\ge 0$ and let $B_0=B_0(x_0)>0$ be such that
\begin{equation}\label{eq:B1_asp}
    \P[\sup_{x\in\lb0,x_0\rb}\o(x,j)\le B_0] \ge 0.9\quad\text{for all } j\in\Z.
\end{equation}
As $\bkwt,\vtwt$ have unbounded supports (\cref{sec:model}\ref{P3}), we have that
\[
    q\coloneqq \P[\sup_{x\in\lb0,x_0\rb}\o(x,j) \ge B_0+1] > 0.
\]
Fix $a<b$ and $j\in\lb a,b\rb$.
For an environment $\o$ and a real number $B>0$, let $\o^B_j=(\o^B_j(x,t))_{(x,t)\in\H}$ be the environment obtained from $\o$ by replacing $\o(x,j)$ with $B$, for all $x\in\lb0,x_0\rb$.
Viewing $G(0,a;0,b)$ as a function of the environment, we set
\[
    \Delta_j(\o)\coloneqq \inf_{B\ge B_0+1}G(\o^B_j)-\sup_{0<B\le B_0}G(\o^{B}_j).
\]

The following is a special case of \cite[Theorem 8]{NPDivergenceShapeFluctuations1995}.
\begin{theorem}[{\cite[Theorem 8]{NPDivergenceShapeFluctuations1995}}]\label{thm:newman-piza}
    Fix $a<b$ with $\Pi(0,a;0,b)\ne\varnothing$, and fix  $G\in\{F,L\}$.
    Then, given $\e>0$ and subevents $\msf{F}_j\subset\{\o:\Delta_j(\o)\ge \e\}$ for $j\in\lb a,b\rb$, we have that
    \[
        \Var(G(0,a;0,b)) \ge 0.9\cdot q\cdot \e^2\sum_{j=a}^{b}\P(\F_j)^2.
    \]
\end{theorem}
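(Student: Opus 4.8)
The plan is to combine a Doob (martingale) decomposition of $\Var(G(0,a;0,b))$ along a convenient ordering of the independent weights with a lower bound on each martingale increment coming from monotonicity. The one structural input is that both $F$ and $L$ are coordinatewise nondecreasing functions of the environment $(\o(x,t))$: increasing any weight increases every Hamiltonian, hence the partition function and the maximal energy. Consequently $G(\o_j^B)\le G(\o_j^{B'})$ for $B\le B'$, so the infimum and supremum in the definition of $\Delta_j$ are attained at $B=B_0+1$ and $B=B_0$, giving $\Delta_j(\o)=G(\o_j^{B_0+1})-G(\o_j^{B_0})\ge 0$; in particular $\Delta_j\ge 0$ everywhere and $\Delta_j\ge\e$ on $\F_j$.

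For the decomposition, group the weights into the blocks $W_j\coloneqq(\o(x,j))_{x\in\lb0,x_0\rb}$, $j\in\lb a,b\rb$, together with the block $\mc R$ of all remaining weights; these are mutually independent. Revealing $\mc R$ first and then $W_a,\dots,W_b$ in order, and writing $\FF_j\coloneqq\sigma(\mc R,W_a,\dots,W_j)$ and $M_j\coloneqq\E[G(0,a;0,b)\mid\FF_j]$, orthogonality of martingale increments gives the identity $\Var(G(0,a;0,b))=\sum_{j=a}^b\E[\Var(M_j\mid\FF_{j-1})]$. It then suffices to show $\E[\Var(M_j\mid\FF_{j-1})]\ge 0.9\,q\,\e^2\,\P(\F_j)^2$ for each $j$.

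Fix $j$ and argue conditionally on $\FF_{j-1}$. Since $W_j$ is independent of $\FF_{j-1}$, one has $M_j=\rho_j(W_j)$ for a nondecreasing $\FF_{j-1}$-measurable function $\rho_j$ with $\rho_j(c,\dots,c)=\E[G(\o_j^c)\mid\FF_{j-1}]$. Taking an independent copy $W_j'$ of $W_j$, conditional independence yields, for any disjoint $\sigma(W_j)$-events $A,A'$ on which $\rho_j(W_j)\ge u$ and $\rho_j(W_j)\le\ell$ respectively, $\Var(M_j\mid\FF_{j-1})=\tfrac12\E[(\rho_j(W_j)-\rho_j(W_j'))^2\mid\FF_{j-1}]\ge(u-\ell)^2\,\P(A)\,\P(A')$. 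Choose $A'=\{\sup_x\o(x,j)\le B_0\}$ (so $\P(A')\ge0.9$ by \eqref{eq:B1_asp}, and $\rho_j(W_j)\le\rho_j(B_0,\dots,B_0)$ on $A'$ by monotonicity) and $A=\{\sup_x\o(x,j)\ge B_0+1\}$ (so $\P(A)=q$), and show $\rho_j(W_j)\ge\rho_j(B_0+1,\dots,B_0+1)$ on $A$; then $u-\ell\ge\rho_j(B_0+1,\dots)-\rho_j(B_0,\dots)=\E[\Delta_j\mid\FF_{j-1}]\ge0$, so $\Var(M_j\mid\FF_{j-1})\ge0.9\,q\,(\E[\Delta_j\mid\FF_{j-1}])^2$. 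Finally, $\Delta_j\ge\e\1_{\F_j}$ gives $\E[\Delta_j\mid\FF_{j-1}]\ge\e\,\P(\F_j\mid\FF_{j-1})$, whence $\E[(\E[\Delta_j\mid\FF_{j-1}])^2]\ge\e^2\,\E[\P(\F_j\mid\FF_{j-1})^2]\ge\e^2\,\P(\F_j)^2$ by Jensen; summing over $j$ produces the claimed bound. (Note no measurability hypothesis on $\F_j$ is needed.)

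The one place requiring genuine work, rather than bookkeeping, is the assertion that $\rho_j(W_j)\ge\rho_j(B_0+1,\dots,B_0+1)$ on $A=\{\sup_x\o(x,j)\ge B_0+1\}$. When $x_0=0$ this is immediate, since then $W_j$ is the single weight $\o(0,j)$ and $A$ forces $W_j\ge B_0+1$. For $x_0\ge1$ the event $A$ forces only that \emph{some} coordinate of $W_j$ exceeds $B_0+1$, so one cannot directly compare $W_j$ with $(B_0+1,\dots,B_0+1)$; handling this requires further splitting $W_j$ into its $x_0+1$ columns, telescoping $\Delta_j$ into single-column bumps from $B_0$ to $B_0+1$, and running the conditional-variance estimate column by column against the appropriate ``large''/``small'' events. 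This is precisely the block computation in \cite[Theorem 8]{NPDivergenceShapeFluctuations1995}, which I would invoke or reproduce; everything else above is self-contained given the monotonicity of $G$.
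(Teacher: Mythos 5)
The statement you were asked to prove is recorded in the paper purely as a citation of \cite[Theorem 8]{NPDivergenceShapeFluctuations1995}; the paper gives no proof, so the only comparison available is with your self-contained attempt. Your scheme (block-by-block martingale decomposition, the two-event conditional-variance bound, $\Delta_j\ge\e\,\1_{\F_j}$ followed by Jensen, and the correct observation that no measurability is needed of $\F_j$ --- which matters, since in the paper's application $\F_j$ does depend on the $j$-th block) is the right kind of argument, and those steps are sound; the claimed ``identity'' $\Var(G)=\sum_j\E[\Var(M_j\mid\FF_{j-1})]$ is really only ``$\ge$'' because revealing $\mc R$ first also contributes variance, but that is the direction you need. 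The genuine problem is exactly the one you flag, and it is the heart of the matter rather than bookkeeping: for $x_0\ge1$, on $A=\{\sup_{x\in\lb0,x_0\rb}\o(x,j)\ge B_0+1\}$ only \emph{one} coordinate of $W_j$ is forced to be large, so coordinatewise monotonicity gives no comparison between $\rho_j(W_j)$ and $\rho_j(B_0+1,\dots,B_0+1)$; on $A$ the block $W_j$ need not even dominate the constant-$B_0$ block, so nothing rules out $\rho_j(W_j)<\rho_j(B_0,\dots,B_0)$ there. Since $\Delta_j$ is defined by replacing the \emph{whole} block by a constant, treating the block as a single coordinate and invoking monotonicity simply does not close. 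Indeed, for a generic coordinatewise nondecreasing $\rho_j$ (e.g.\ one depending only on the $x=0$ coordinate through a step at $B_0+1$, when $\vtwt$ rarely exceeds $B_0+1$ but $\bkwt$ often does) the inequality $\Var(\rho_j(W_j))\ge 0.9\,q\,\bigl(\rho_j(B_0+1,\dots)-\rho_j(B_0,\dots)\bigr)^2$ with the sup-based $q$ is false, so any complete proof must either use structure of $F,L$ beyond monotonicity or accept a different constant.

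Your two proposed repairs do not rescue the literal statement. Telescoping $\Delta_j$ into single-column bumps changes both the increment (on $\{\Delta_j\ge\e\}$ you only know that \emph{some} column bump is $\ge\e/(x_0+1)$, and which one is random) and the large/small events (their probabilities are those of a single weight, not $q$ and $0.9$), so at best this route yields a bound with $x_0$-dependent degraded constants rather than $0.9\,q\,\e^2$; and ``invoke \cite[Theorem 8]{NPDivergenceShapeFluctuations1995}'' is circular here, since that specialization is precisely what you were asked to prove (although, to be fair, citing it without proof is exactly what the paper does). The clean way to finish \emph{within your own framework} is to replace $A$ by the all-coordinates-large event $\{\min_{x\in\lb0,x_0\rb}\o(x,j)\ge B_0+1\}$: then $W_j\ge(B_0+1,\dots,B_0+1)$ coordinatewise, monotonicity does give $\rho_j(W_j)\ge\rho_j(B_0+1,\dots,B_0+1)$, and your computation goes through verbatim, proving the theorem with $q$ replaced by $\tilde q\coloneqq\P(\min_{x\in\lb0,x_0\rb}\o(x,j)\ge B_0+1)$, which is still strictly positive by \cref{sec:model}\ref{P3} and independence, and which is all that the paper's sole application (\cref{lem:varatleastlinear}) requires. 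As written, however, your argument establishes the stated inequality only in the case $x_0=0$.
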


We deduce the desired lower bound from \cref{thm:newman-piza} and the results of \cref{sec:pinning}:
\begin{lemma}[Variance grows at least linearly]\label{lem:varatleastlinear}
    There exists a constant $C>0$ depending only on the law of $\o$, such that for all $n\ge2$,
    \[
        \Var(G(0,0;0,n)) \ge Cn.
    \]
\end{lemma}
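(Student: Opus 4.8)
The plan is to apply \cref{thm:newman-piza} with $a=0$, $b=n$. This reduces the claim to producing a fixed constant $\e>0$ and, for every $j\in\lb 0,n\rb$, an event $\F_j\subset\{\o:\Delta_j(\o)\ge\e\}$ with $\P(\F_j)$ bounded below by a positive constant, for then $\Var(G(0,0;0,n))\ge 0.9\,q\,\e^2\sum_{j=0}^{n}\P(\F_j)^2\gs n$. Throughout we take $x_0$ to be a large constant depending only on the law of $\o$ (in particular $x_0\ge k_0$), then $B_0=B_0(x_0)$ as in \eqref{eq:B1_asp}, and recall $q=q(x_0,B_0)>0$ by \cref{sec:model}\ref{P3}.

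First I would compute $\Delta_j$ explicitly. Since $B\mapsto G(\o^B_j)$ is continuous and nondecreasing (raising weights raises both $F$ and $L$), we have $\Delta_j=G(\o^{B_0+1}_j)-G(\o^{B_0}_j)$. Let $W_{\le x_0}$ denote the sum over paths $(0,0)\to(0,n)$ with $\pi(j)\le x_0$ of $\exp(H(\pi)-\o(\pi(j),j))$, and $W_{>x_0}$ the sum over paths with $\pi(j)>x_0$ of $e^{H(\pi)}$; both are independent of the weights $(\o(x,j))_{x\le x_0}$. A short computation gives, in the case $G=F$, that $Z(\o^B_j)=e^{B}W_{\le x_0}+W_{>x_0}$, hence $\Delta_j=\log\bigl(1+(e-1)\rho\bigr)$ where $\rho\coloneqq e^{B_0}W_{\le x_0}/(e^{B_0}W_{\le x_0}+W_{>x_0})$ is exactly the quenched hitting probability $\Q^{(0,0;\,0,n)}_{\o^{B_0}_j}(\pi(j)\le x_0)$ in the environment $\o^{B_0}_j$. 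In the case $G=L$ the analogous computation gives $\Delta_j=1$ whenever the leftmost geodesic $\G'$ of $\o^{B_0}_j$ has $\G'(j)\le x_0$ (and $\Delta_j\in(0,1)$ or $\Delta_j=0$ otherwise).

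Next I would use pinning to show these quantities are large with good $\P$-probability. Put $E_j\coloneqq\{\sup_{x\le x_0}\o(x,j)\le B_0\}$, so $\P(E_j)\ge 0.9$ by \eqref{eq:B1_asp}. On $E_j$, replacing the marked weights by $B_0$ only raises them, from which one checks monotonically that $\rho\ge\Q^{(0,0;\,0,n)}(\pi(j)\le x_0)$ in the $F$ case, and that $\G'(j)\le x_0$ whenever the leftmost geodesic $\G$ of the \emph{original} environment satisfies $\G(j)\le x_0$ in the $L$ case. Now \cref{lem:O1fluc} (the $O(1)$-transversal-fluctuations consequence of \cref{thm:pinning}), applied with cutoff $k=x_0$, shows that for $x_0$ chosen large enough — depending only on the law of $\o$ — one has $\P\bigl(\Q^{(0,0;\,0,n)}(\pi(j)\le x_0)\ge\tfrac12\bigr)\ge 1-C e^{-C'x_0^{1/3}}\ge 0.75$ in the $F$ case, and $\P(\G(j)\le x_0)\ge 0.75$ in the $L$ case, uniformly in $j\in\lb 0,n\rb$ (the cases $j\le x_0$ or $j\ge n-x_0$ are automatic since $\pi(j)\le\min(j,n-j)$). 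Setting $\F_j\coloneqq E_j\cap\{\Q^{(0,0;\,0,n)}(\pi(j)\le x_0)\ge\tfrac12\}$ (resp.\ $E_j\cap\{\G(j)\le x_0\}$) and $\e\coloneqq\log\tfrac{e+1}{2}>0$, we obtain $\F_j\subset\{\Delta_j\ge\e\}$ and $\P(\F_j)\ge 0.9+0.75-1=0.65$. Plugging into \cref{thm:newman-piza} gives $\Var(G(0,0;0,n))\ge 0.9\,q\,\e^2\,(0.65)^2(n+1)$ for $n\ge k_0$; the finitely many cases $2\le n<k_0$ follow after possibly shrinking $C$, since there $\Var(G(0,0;0,n))$ is a fixed positive number (the weights are non-degenerate by \cref{sec:model}\ref{P3}).

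The main obstacle is conceptual rather than computational: $\Delta_j$ is defined through $G$ evaluated at the \emph{deterministically modified} environments $\o^{B_0}_j,\o^{B_0+1}_j$, whereas pinning controls the polymer (resp.\ geodesic) hitting probability in the \emph{original} environment. Bridging the two is precisely the role of the event $E_j$, which forces the original marked weights to lie below $B_0$ and thereby makes the relevant comparison of environments monotone; a secondary wrinkle is that in the $L$ case $\Delta_j$ is bounded below not on $\{\G(j)\le x_0\}$ by itself but only on $E_j\cap\{\G(j)\le x_0\}$, where it in fact equals $1$.
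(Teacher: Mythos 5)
Your proposal is correct and follows essentially the same route as the paper: the Newman--Piza bound (\cref{thm:newman-piza}) applied to events $\F_j$ that intersect the bounded-marked-weight event from \eqref{eq:B1_asp} with the pinning/transversal-fluctuation estimate of \cref{lem:O1fluc}, uniformly in $j$. The differences are minor: you compute $\Delta_j$ exactly (spelling out the monotone comparison between $\o$ and $\o^{B_0}_j$, which the paper's Jensen-inequality step elides, as well as the boundary-$j$ and small-$n$ cases), whereas the paper gets the same conclusion with a one-line Jensen bound and slightly different constants.
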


\begin{proof}
Suppose first $G=F$.
Write $\Q\coloneqq \Q^{(0,0;\,0,n)}$.
By \cref{lem:O1fluc}, there exists $x_0\ge 0$ such that for any $j\in\lb 1,n\rb$,
\begin{equation}\label{eq:Aj}
    \text{the event}\quad
    \A_j\coloneqq \Bigl\{\o:\Q\bigl(\pi(j)\in \lb 0, x_0\rb\bigr)\ge 0.1\Bigr\}\quad\text{satisfies}\quad \P(\A_j)\ge0.9.
\end{equation}
Then by \eqref{eq:B1_asp},
\[
    \text{the event}\quad
    \F_j\coloneqq \A_j\cap\left\{\o:\sup_{x\in\lb 0,x_0\rb}\o(x,j) \le B_0\right\}\quad\text{satisfies}\quad \P(\F_j) \ge 0.8.
\]
We write $B\coloneqq B_0+1$.
We fix $\o\in\F_j$ and denote by $H$ (resp. $H^B_j$) the Hamiltonian in the environment $\o$ (resp. $\o^B_j$).
Viewing the partition function $Z$ as a function of the environment, we have, by Jensen's inequality,
\[
\begin{split}
    \log Z(\o^B_j) - \log Z(\o)
    &= \log\left(\sum_{\pi}\frac{e^{H(\pi)}}{Z(\o)}e^{H^B_j(\pi)-H(\pi)}\right)\\
    &\ge \sum_{\pi}\frac{e^{H(\pi)}}{Z(\o)}(H^B_j(\pi)-H(\pi))\\
    &\ge \sum_{\pi:\pi(j)\in\lb 0,\,x_0\rb}\frac{e^{H(\pi)}}{Z(\o)}(B-B_0)\\
    &\ge 0.1.
\end{split}    
\]
We now apply \cref{thm:newman-piza} with $\e=0.1$ to obtain
\[
    \Var(F) \ge 0.9\cdot q\cdot 0.1^2\sum_{j=0}^n \P(\F_j)^2 \ge (0.00576\,q)\cdot n,
\]
which is just \cref{lem:varatleastlinear} for $G=F$.

The same argument applies at zero temperature.
Let all notation be as in the previous paragraph.
Fix $\o\in\F_j$ and let $\G:(0,0)\to(0,n)$ be a geodesic in $\o$, i.e. $L(\o)=H(\G)$.
Then we have
\[
        L(\o^B_j) - L(\o)
        \ge H^B_j(\G) - H(\G)
        \ge 1.
\]
\cref{thm:newman-piza} now implies
\cref{lem:varatleastlinear} for $G=L$.
\end{proof}

\subsection{Variance grows at most linearly}\label{sec:upper_bound}
We show that $\Var(G(0,0;0,n))\ls n$ using the Efron--Stein inequality, which we now recall (e.g. \cite[Lemma 3.2]{ADH50YearsFirstpassage2017}).
\begin{theorem}[Efron--Stein inequality]\label{thm:efron-stein}
    Let $\xi_1,\dots,\xi_n,\xi_1',\dots,\xi_n'$ be independent random variables with $\xi_i\law\xi_i'$ for all $i$.
    Then for a square-integrable function $h(\xi_1,\dots,\xi_n)$,
    \[
        \Var(h(\xi_1,\dots,\xi_n)) \le \frac{1}{2}\sum_{i=1}^n \E[(h(\xi_1,\dots,\xi_n)-h(\xi_1,\dots,\xi_{i-1}, \xi_i',\xi_{i+1}, \dots,\xi_n))^2].
    \]
\end{theorem}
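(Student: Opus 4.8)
The plan is to give the standard martingale-decomposition proof. Write $X\coloneqq h(\xi_1,\dots,\xi_n)$ and, for $0\le i\le n$, let $\E_i[\cdot]\coloneqq\E[\cdot\mid\xi_1,\dots,\xi_i]$ denote conditional expectation, so that $\E_0=\E$ and $\E_n X=X$. Set $\Delta_i\coloneqq\E_i X-\E_{i-1}X$, so $X-\E X=\sum_{i=1}^n\Delta_i$. The first step is the orthogonality of martingale increments: for $i<j$ the random variable $\Delta_i$ is measurable with respect to $\sigma(\xi_1,\dots,\xi_{j-1})$ while $\E_{j-1}\Delta_j=0$, hence $\E[\Delta_i\Delta_j]=\E[\Delta_i\,\E_{j-1}\Delta_j]=0$. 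Therefore $\Var(X)=\sum_{i=1}^n\E[\Delta_i^2]$, and it remains to bound each term.

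Second, I would bound $\E[\Delta_i^2]$ by a conditional variance. Let $\E^{(i)}[\cdot]$ denote conditional expectation given all coordinates except the $i$th, and put $g_i\coloneqq\E^{(i)}X$, a function of $(\xi_j)_{j\ne i}$. Since $\sigma(\xi_1,\dots,\xi_{i-1})\subseteq\sigma(\xi_j:j\ne i)$ and $g_i$ is independent of $\xi_i$, the tower property gives $\E_i[g_i]=\E_{i-1}[g_i]=\E_{i-1}X$; consequently $\Delta_i=\E_i X-\E_i g_i=\E_i[X-g_i]$. Conditional Jensen then yields $\E[\Delta_i^2]\le\E[(X-g_i)^2]=\E\bigl[\E^{(i)}[(X-\E^{(i)}X)^2]\bigr]=\E\bigl[\Var^{(i)}(X)\bigr]$, where $\Var^{(i)}$ denotes the variance in $\xi_i$ with the remaining coordinates held fixed. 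Summing over $i$ gives $\Var(X)\le\sum_{i=1}^n\E\bigl[\Var^{(i)}(X)\bigr]$.

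Third and last, I would convert each conditional variance into a resampled difference via the elementary identity $\Var(Y)=\tfrac12\E[(Y-Y')^2]$, valid whenever $Y'$ is an independent copy of $Y$. Applying this conditionally on $(\xi_j)_{j\ne i}$ with $\xi_i$ replaced by $\xi_i'$ — legitimate because $\xi_i$ and $\xi_i'$ are conditionally i.i.d. given the other coordinates, all the $\xi$'s and $\xi'$'s being independent with $\xi_i\law\xi_i'$ — yields $\Var^{(i)}(X)=\tfrac12\,\E^{(i)}\bigl[(X-X_i')^2\bigr]$ with $X_i'\coloneqq h(\xi_1,\dots,\xi_{i-1},\xi_i',\xi_{i+1},\dots,\xi_n)$. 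Taking expectations and combining with the previous step gives exactly $\Var(X)\le\tfrac12\sum_{i=1}^n\E[(X-X_i')^2]$. Square-integrability of $h$, which is hypothesized, is what makes all the $L^2$ projections above legitimate; the only mildly delicate point is the bookkeeping in the second step — correctly identifying $\E_{i-1}X$ with $\E_i[\E^{(i)}X]$ — but this is immediate from the independence of the coordinates.
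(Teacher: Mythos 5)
Your proof is correct and complete: the martingale decomposition $X-\E X=\sum_i\Delta_i$ with orthogonal increments, the bound $\E[\Delta_i^2]\le\E\bigl[\Var^{(i)}(X)\bigr]$ via conditional Jensen (using independence of the coordinates to identify $\E_{i-1}X$ with $\E_i[\E^{(i)}X]$), and the resampling identity $\Var^{(i)}(X)=\tfrac12\E^{(i)}[(X-X_i')^2]$ are exactly the standard route, and each step is justified as you state. Note, however, that the paper does not prove this theorem at all: it records the Efron--Stein inequality as a known result and cites a reference (Lemma 3.2 of the Auffinger--Damron--Hanson survey) for it, so there is no in-paper argument to compare against; your write-up simply supplies the canonical proof that the citation points to.
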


We now establish our upper bound.
\begin{lemma}[Variance grows at most linearly]\label{lem:varatmostlinear}
    There exists a constant $C>0$ depending only on the law of $\o$, such that for all $n\ge2$,
    \[
        \Var(G(0,0;0,n)) \le Cn.
    \]
\end{lemma}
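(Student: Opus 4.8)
The plan is to apply the Efron--Stein inequality (\cref{thm:efron-stein}) with the independent variables being the weights $(\o(x,t))_{(x,t)\in\BB}$ inside the box $\BB\coloneqq \lb -n,n\rb\times\lb 0,n\rb$ that contains every path $(0,0)\to(0,n)$, and $h$ being the free energy $G=G(0,0;0,n)$. The inequality bounds $\Var(G)$ by $\tfrac12\sum_{(x,t)\in\BB}\E[(G-G^{(x,t)})^2]$, where $G^{(x,t)}$ denotes the free energy in the environment with $\o(x,t)$ resampled by an independent copy $\o'(x,t)$. The key point is to control a single resampling increment. By \cref{lem:LSE} (applied to the log-sum-exp or the max, as appropriate, over the paths through $(x,t)$ versus the rest), the increment $|G-G^{(x,t)}|$ is bounded by $|\o(x,t)-\o'(x,t)|$; but a cruder and more useful bound for summing is that $|G-G^{(x,t)}|$ is controlled by $(\o(x,t)+\o'(x,t))$ times (an upper bound for) the quenched probability that the polymer passes through $(x,t)$ — indeed, differentiating, $|\partial G/\partial\o(x,t)| = \Q(\pi\ni(x,t))\le 1$, and in fact $|G-G^{(x,t)}|\le (\o(x,t)\vee\o'(x,t))\cdot\bigl(\Q(\pi\ni(x,t))\vee\Q'(\pi\ni(x,t))\bigr)$ where $\Q,\Q'$ are the two quenched measures. (At zero temperature the same holds with $\Q(\pi\ni(x,t))$ replaced by $\1\{\G\ni(x,t)\}$.)

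The main step is then to show $\sum_{(x,t)\in\BB}\E[(G-G^{(x,t)})^2]\ls n$. Fix a height $t$. The weights at height $t$ that the polymer can actually use are $\o(x,t)$ for $x\in\lb 0, m_t\rb$ where $m_t\coloneqq \min(t,n-t)$ is the maximal horizontal coordinate of a path $(0,0)\to(0,n)$ at height $t$. Summing over $x$ at fixed $t$: $\sum_x \Q(\pi(t)=x)\le 1$, and more importantly $\sum_x \Q(\pi(t)=x)^2$ is typically much smaller. The pinning estimate \cref{thm:pinning} (in the form of \cref{lem:O1fluc}, the $O(1)$ transversal fluctuations) says that with overwhelming $\P$-probability the quenched law of $\pi(t)$ has an exponential tail, so $\sum_x \Q(\pi(t)=x)^2 = O(1)$ with $\P$-probability $1-C\exp(-C'(\cdot)^{1/3})$, and on the complementary rare event we use the trivial bound $\sum_x \Q(\pi(t)=x)^2\le 1$ together with the subexponential moment bounds on the weights to see the contribution is still $O(1)$ (or summably small). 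Hence $\sum_{x}\E[(G-G^{(x,t)})^2]\ls \E[(\o(0,t)+\o'(0,t))^2]\cdot O(1) = O(1)$ uniformly in $t$, using properties \ref{P1}, \ref{P2}; summing over the $n+1$ heights gives the claim. One must be slightly careful that $G-G^{(x,t)}$ is bounded by $(\o(x,t)\vee\o'(x,t))$ times a quenched mass, and that squaring and taking expectations separates the weight moments from the geometric (pinning) input via Cauchy--Schwarz, exactly as in the truncation argument of \cref{lem:large_dev}; a short union bound over $x\in\lb0,m_t\rb$ (at most $n+1$ values) handles the rare large-weight event with room to spare because $\P(\o>z)$ decays exponentially while we only lose polynomial factors.

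I would organize the write-up as: (i) reduce to bounding a single resampling increment, stating the bound $|G-G^{(x,t)}|\le (\o(x,t)\vee\o'(x,t))\bigl(\Q(\pi\ni(x,t))\vee\Q'(\pi\ni(x,t))\bigr)$ and proving it by the Jensen-type computation already used in \cref{lem:varatleastlinear} (or the mean value theorem via \cref{lem:LSE}); (ii) fix a height $t$ and bound $\sum_{x}\bigl(\Q(\pi(t)=x)\bigr)^2$ using \cref{lem:O1fluc} on a high-probability event and the trivial bound $\le 1$ elsewhere, absorbing the weight moments by Cauchy--Schwarz and \ref{P2}; (iii) sum over $t\in\lb 0,n\rb$ and invoke \cref{thm:efron-stein}. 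The zero-temperature case is identical after replacing $\Q(\pi(t)=x)$ with $\1\{\G(t)=x\}$, for which $\sum_x \1\{\G(t)=x\}=1$ trivially and $\sum_x\1\{\G(t)=x\}^2=1$, so the argument is even simpler and needs no pinning at all — though of course for a matching $\Var\gs n$ lower bound we did use pinning in \cref{lem:varatleastlinear}. The main obstacle I anticipate is item (ii): getting a clean, honestly-justified bound on $\sum_x \Q(\pi(t)=x)^2$ that is $O(1)$ in $\P$-expectation — this is where pinning enters, and one needs to be careful that the rare-event contribution (where $\Q(\pi(t)=x)$ is spread out and/or a weight at height $t$ is atypically large) is genuinely negligible after multiplying by the weight-moment factors and summing the $n$ heights, which works because the failure probability in \cref{lem:O1fluc} decays (stretched-)exponentially in the transversal scale while the box has only polynomially many sites.
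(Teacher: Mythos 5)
Your main argument is correct and is a genuine variant of the paper's proof rather than a reproduction of it. The paper also runs \cref{thm:efron-stein} plus the pinning input \cref{lem:O1fluc}, but it resamples an entire row $\o_i=(\o(x,i))_{x\ge0}$ at a time ($n$ increments), bounds the increment by $\sup_{x\in\lb0,x_i\rb}\o(x,i)+\sup_{x\in\lb0,x_i'\rb}\o'(x,i)$ where $x_i$ is a quenched \emph{median} of $\pi(i)$, and then pays the polylogarithmic fourth moment of a running maximum of subexponential weights, with pinning controlling the tail of $x_i$. You instead resample site by site and bound the increment through the quenched occupation probability, $|G-G^{(v)}|\le|\o(v)-\o'(v)|\max\bigl(\Q(\pi\ni v),\Q'(\pi\ni v)\bigr)$, then decouple by Cauchy--Schwarz per site and use \cref{lem:O1fluc} to get $\sum_x \bE\bigl[\Q(\pi(t)=x)^4\bigr]^{1/2}=O(1)$ uniformly in the height $t$, so summing the $n+1$ heights gives $Cn$; this all goes through (for $x<k_0$ use the trivial bound, for $n<k_0$ the lemma is trivial, and $\Q'$ has the same law as $\Q$). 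Two small points of justification: your increment bound needs slightly more than ``differentiating,'' since the mean value theorem only gives $\Q_s(\pi\ni v)$ at an intermediate weight value $s$; to dominate by the maximum of the two endpoint measures, either integrate $\partial F/\partial\o(v)=\Q_s(\pi\ni v)$ over $s$ and use that this map is nondecreasing (its derivative is the quenched variance of $\1\{\pi\ni v\}$), or argue directly from $Z'/Z=1+\Q(\pi\ni v)(e^{\delta}-1)$. At zero temperature the analogous bound is immediate. (Also, in the half-space model the relevant box is $\lb0,n\rb\times\lb0,n\rb$.)

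The one genuine error is the aside that the zero-temperature case ``needs no pinning at all.'' Summing $\1\{\G(t)=x\}$ over $x$ does give $1$, but you cannot then pull out an $O(1)$ weight moment: the quantity you must control is $\bE\bigl[(\o\vee\o')(\G(t),t)^2\bigr]$, and the geodesic is \emph{attracted} to large weights, so the weight at its location is size-biased. Without pinning, the only easy bound is the maximum over the $\asymp\min(t,n-t)$ admissible sites at height $t$, whose second moment grows like $(\log n)^2$ for subexponential weights, yielding only $\Var(L)\ls n(\log n)^2$. So at zero temperature you should run exactly the same Cauchy--Schwarz plus \cref{lem:O1fluc} step, with $\P(\G(t)\ge x)$ playing the role of the quenched tail --- which is what your earlier sentence ``identical after replacing $\Q(\pi(t)=x)$ with $\1\{\G(t)=x\}$'' already suggests, and is also how the paper handles $G=L$ (there via $x_i=\G(i)$).
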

\begin{proof}
For $i\in\lb 1,n\rb$ we set $\o_i\coloneqq (\o(x,i))_{x\ge 0}$, so that the restriction of the environment $\o$ to $\H_{\lb 1,n\rb}$ is the tuple $W\coloneqq (\o_1,\dots,\o_n)$.
We fix independent copies $\o_i'\law\o_i$ for $i\in\lb 1,n\rb$, and write $W'_i\coloneqq (\o_1,\dots,\o_{i-1}, \o_i',\o_{i+1}, \dots,\o_n)$.

Suppose first $G=F$.
Fix $i\in\lb 1,n\rb$.
The polymer $\pi:(0,0)\to(0,n)$ induces a probability distribution on the horizontal line $\Zpos\times\{i\}$.
We denote by $x_i=x_i(W)$ a median of this distribution, i.e.
\[
    \Q^{(0,0;\,0,n)}\bigl(\pi(i)\le x_i\bigr) \ge \frac{1}{2}
    \quad\text{and}\quad
    \Q^{(0,0;\,0,n)}\bigl(\pi(i)\ge x_i\bigr) \ge \frac{1}{2}.    
\]
Let $x_i'$ denote the same with respect to the environment $W'_i$.

Let $H$ be the Hamiltonian in $W$ and $H'_i$ be the Hamiltonian in $W_i'$.
Viewing $F(0,0;0,n)$ as a function of $W$, we have
\[
    \begin{split}
        F(W) &\le \log\left(\sum_{\pi:\pi(i)\le x_i}\exp\left(H(\pi)\right)\right) - \log \frac12\\
        &\ls \log\left(\sum_{\pi:\pi(i)\le x_i}\exp\left(H_i'(\pi)\right)\cdot
         \exp\left(\sup_{x\in\lb 0,x_i\rb}\o(x,i)\right)\right)\\
        &= \log\left(\sum_{\pi:\pi(i)\le x_i}\exp\left(H_i'(\pi)\right)\right) + \sup_{x\in\lb 0,x_i\rb}\o(x,i)\\
        &\le F(W'_i) + \sup_{x\in\lb 0,x_i\rb}\o(x,i).
    \end{split}
\]
Interchanging the roles of $W,W'_i$, we conclude
\begin{equation}\label{eq:var_upper_1}
    |F(W)-F(W'_i)|\ls \sup_{x\in\lb 0,x_i\rb}\o(x,i) + \sup_{x\in\lb 0,x_i'\rb}\o'(x,i).
\end{equation}
Write $M(y)\coloneqq \sup_{x\in\lb 0,y\rb}\o(x,i)$ and $M'_i(y)\coloneqq \sup_{x\in\lb 0,y\rb}\o'(x,i)$.
Then we have 
\begin{align*}
        \E\bigl[(F(W)-F(W'_i))^2\bigr]
        &\le C\E[\left(\sum_{y=0}^\infty(M(y)+M_i'(y))\1_{\{x_i\ge y \text{ or } x_i'\ge y\}}\right)^2]\\
        &\le C \E[\left(\sum_{y=0}^\infty M(y)\1_{\{x_i\ge y \text{ or } x_i'\ge y\}}\right)^2]\\
        &\le C \sum_{y=0}^\infty\sum_{z=0}^y\E[M(y)M(z)\1_{\{x_i\ge y \text{ or } x_i'\ge y\}}]\stepcounter{equation}\tag{\theequation}\label{eq:var_upper_22}\\
        &\le C\sum_{y=0}^\infty y\,\E\bigl[M(y)^4\bigr]^{1/2}\,\P(x_i\ge y \text{ or } x_i'\ge y)^{1/2}
        \stepcounter{equation}\tag{\theequation}\label{eq:var_upper_2}\\
        &\le C \sum_{y=0}^\infty y(\log y)^{C'}\,\exp(-C''y^{1/3})
        \stepcounter{equation}\tag{\theequation}\label{eq:var_upper_3}\\
        &<\infty,
\end{align*}
where in \eqref{eq:var_upper_22} we expanded the square and used that $\{x_i\ge y\text{ or }x_i'\ge y\}\subset \{x_i\ge z\text{ or } x_i'\ge z\}$ for $y\ge z$,
in \eqref{eq:var_upper_2} we used the Cauchy--Schwarz inequality and the estimate $M(y)M(z)\le M(y)^2$ for $y\ge z$, and in \eqref{eq:var_upper_3} we used \cref{lem:O1fluc} and the fact that the fourth moment of the maximum of $y$-many i.i.d.  subexponential random variables grows polylogarithmically in $y$ (for us, all but $\o(0,i)$ are identically distributed, which is irrelevant asymptotically). 
The constants appearing in \eqref{eq:var_upper_3} do not depend on $i$.
Therefore there exists $C>0$ depending only on the law of $\o$ with
\[
    \sum_{i=1}^n \E[(F(W)-F(W_i'))^2] \le Cn\quad\text{for all } n\ge2.
\]
This and \cref{thm:efron-stein} together imply \cref{lem:varatmostlinear} for $G=F$.

The preceding argument also handles the zero temperature case $G=L$.
To see this, observe that the correspondence of \cref{sec:pos-zero} yields $x_i=\G(i)$. It follows that
\[
    L(W) \le H'_i(\G) + \o(x_i,i) \le L(W')+ \o(x_i,i).
\]
Interchanging the roles of $W,W'$ yields the estimate
\[
    |L(W)-L(W')|\le \o(x_i,i)+\o(x_i',i).
\]
This is \emph{stronger} than \eqref{eq:var_upper_1}, so the argument following \eqref{eq:var_upper_1} implies \cref{lem:varatmostlinear} for $G=L$.
\end{proof}


\section{Free energy is approximately a sum of independent random variables}\label{sec:approx_indep}

The main result of this section (\cref{thm:comparison}) asserts that the diffusively-scaled free energy is approximated by a sum of independent random variables.
We begin by defining the latter.

For each (even) $n\ge 4$, we fix $J,K$ satisfying
\begin{equation}\label{eq:JK}
    J^{1/4}\in [(\log n)^{5/4},\; 2(\log n)^{5/4}]\cap 2\Z
    \qquad\text{and}\qquad K\in [ n^{0.9},\;2n^{0.9}]\cap2\Z,
\end{equation}
where $2\Z$ denotes the set of even integers.
In particular,
\footnote{We choose the exponents $5$ and $0.9$ essentially arbitrarily (cf. the proof sketch in \cref{sec:outline}), with the sole purpose of improving readability.}
\[
    J\asymp (\log n)^5,\qquad K\asymp n^{0.9}.
\]
Let $N\coloneqq \sup\{i:iK+J\le n\}$.
Then $NK\asymp n$, i.e.
\[
    N\asymp n^{0.1}.
\]
For $i\in\lb 1,N\rb$ we define
\begin{equation}\label{eq:si_mi_ti}
    s_i\coloneqq iK,\quad m_i\coloneqq iK+J,\quad\text{and}\quad
    t_i\coloneqq (iK+2J) \wedge n.
\end{equation}
We also set $m_0\coloneqq 0$ and $m_{N+1}\coloneqq n$.
The letters $s,m,t$ respectively stand for ``start,'' ``middle,'' and ``terminal'' (see \cref{fig:local_highways}).
Fix $G\in\{F,L\}$.
We define
\begin{equation}\label{eq:def_Fi}
    G_i^0\coloneqq G(0,m_i;0,m_{i+1}) - \o(0,m_{i+1})\quad\text{for } i\in\lb 0,N-1\rb,
\end{equation}
and $G_N^0 \coloneqq  G(0,m_N;0,m_{N+1})$.

As alluded to above, the present section is aimed at proving the following theorem.

\begin{theorem}[Free energy is approximately a sum of independent random variables]\label{thm:comparison}
    As $n\to\infty$,
    \begin{equation}\label{eq:pos_temp_comparison}
        \frac{1}{\sqrt n}\left|F(0,0;0,n)-\sum_{i=0}^{N} F_i^0\right| \pto  0,
    \end{equation}
    and
    \begin{equation}\label{eq:zero_temp_comparison}
        \frac{1}{\sqrt n}\left|L(0,0;0,n)-\sum_{i=0}^{N} L_i^0\right| \pto  0.
    \end{equation}
\end{theorem}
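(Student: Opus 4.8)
The plan is to establish the positive-temperature statement \eqref{eq:pos_temp_comparison} in detail and then deduce \eqref{eq:zero_temp_comparison} from the correspondence of \cref{sec:pos-zero}. One inequality is essentially free: concatenating paths that pass through all of $(0,m_1),\dots,(0,m_N)$ and correcting for the doubly counted vertical weights gives $Z(0,0;0,n)\ge\prod_{i=0}^{N}Z(0,m_i;0,m_{i+1})\cdot\prod_{i=1}^{N}e^{-\o(0,m_i)}$, whence $F(0,0;0,n)\ge\sum_{i=0}^{N}F_i^0$, and the identical argument gives $L(0,0;0,n)\ge\sum_{i=0}^{N}L_i^0$. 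It therefore remains to prove the reverse bounds up to an $o(\sqrt n)$ additive error, with $\P$-probability tending to $1$.

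For the upper bound on $F(0,0;0,n)$ the plan is to use pinning to reduce to a sum over ``patterns'' of hitting points near the $m_i$. Call a path $\pi:(0,0)\to(0,n)$ \emph{constrained} if for every $i\in\lb 1,N\rb$ it meets both half-windows $\V_{\lb s_i,m_i\rb}$ and $\V_{\lb m_i,t_i\rb}$. Each half-window has length $J\asymp(\log n)^5\ge k_0$, so \cref{thm:pinning} bounds the quenched probability that $\pi$ avoids a fixed one of them by $C''e^{-C'''J}$ off a $\P$-event of probability at most $Ce^{-C'J^{1/3}}$; since $J^{1/3}\gg\log n$ while there are only $2N\asymp n^{0.1}$ half-windows (and $\pi(n)=0$ trivially handles the topmost window when it abuts height $n$), a union bound gives, with $\P$-probability $1-o(1)$, that $\Q^{(0,0;\,0,n)}(\pi\text{ not constrained})=o(1)$; hence $F(0,0;0,n)$ equals $\log Z^{\mathrm{con}}$ up to $o(1)$, where $Z^{\mathrm{con}}$ restricts the partition function to constrained paths. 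For constrained $\pi$ set $a_i\coloneqq\max\{t\in\lb s_i,m_i\rb:\pi(t)=0\}$ and $b_i\coloneqq\min\{t\in\lb m_i,t_i\rb:\pi(t)=0\}$, so $a_i\le m_i\le b_i$, and (using $2J<K$) $b_i<a_{i+1}$ for each $i<N$ and $b_N\le n$. Grouping constrained paths by this pattern $(a_\bullet,b_\bullet)$ and using the standard factorization of the partition function at a sequence of visited lattice points,
\[
    Z^{\mathrm{con}}\le\sum_{(a_\bullet,b_\bullet)}Z(0,0;0,a_1)\Bigl(\prod_{i=1}^{N}Z(0,a_i;0,b_i)\Bigr)\Bigl(\prod_{i=1}^{N-1}Z(0,b_i;0,a_{i+1})\Bigr)Z(0,b_N;0,n)\prod_{i=1}^{N}e^{-\o(0,a_i)-\o(0,b_i)},
\]
the sum running over the at most $(J+1)^{2N}$ admissible patterns.

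The crux is to bound each term by $\exp\!\bigl(\sum_{i=0}^{N}F_i^0+o(\sqrt n)\bigr)$ uniformly in the pattern, via two elementary estimates. First, each ``bulk'' factor $Z(0,b_i;0,a_{i+1})$ and the two end factors $Z(0,0;0,a_1)$, $Z(0,b_N;0,n)$ correspond to path-segments contained in $\lb m_i,m_{i+1}\rb$ (because $b_i\ge m_i$ and $a_{i+1}\le m_{i+1}$, with $m_0=0$, $m_{N+1}=n$); since the weights are positive, restricting paths $(0,m_i)\to(0,m_{i+1})$ to those visiting $(0,b_i)$ and $(0,a_{i+1})$ and using that all partition functions are $\ge1$ yields $Z(0,b_i;0,a_{i+1})\le Z(0,m_i;0,m_{i+1})\,e^{\o(0,b_i)+\o(0,a_{i+1})}$, hence $\log Z(0,b_i;0,a_{i+1})\le F_i^0+\o(0,m_{i+1})+\o(0,b_i)+\o(0,a_{i+1})$, and similarly for the two ends. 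Second, each ``window'' factor $Z(0,a_i;0,b_i)$ depends only on weights in the box $\lb0,J\rb\times\lb s_i,t_i\rb$, so $\log Z(0,a_i;0,b_i)\le O(J^2\log n)$ on the $\P$-event that the largest among the $O(NJ^2)$ weights appearing in all the windows is $O(\log n)$---which, by the subexponentiality of $\bkwt,\vtwt$, has probability $1-o(1)$ and also forces $\max_i\{\o(0,a_i),\o(0,b_i),\o(0,m_{i+1})\}=O(\log n)$. On this event every term is at most $\exp\!\bigl(\sum_{i=0}^{N}F_i^0+O(NJ^2\log n)\bigr)$, while $\log$ of the number of patterns is $O(N\log J)$; since $NJ^2\log n\asymp n^{0.1}(\log n)^{11}$ and $N\log J$ are both $o(\sqrt n)$, we obtain $F(0,0;0,n)\le\sum_{i=0}^{N}F_i^0+o(\sqrt n)$ with $\P$-probability $1-o(1)$, which with the easy direction proves \eqref{eq:pos_temp_comparison}.

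Finally, \eqref{eq:zero_temp_comparison} is obtained by running the same argument through the dictionary of \cref{sec:pos-zero}: the left-most geodesic $\G$ replaces $\pi$, \cref{thm:pinning} makes $\G$ constrained with $\P$-probability $1-o(1)$, the logarithm-of-a-sum inequalities above become max-plus (super)additivity statements for $L$, and---crucially---the single pattern $(a_\bullet,b_\bullet)$ read off from $\G$ replaces the sum over patterns, so the zero-temperature argument is strictly simpler. I expect the main difficulty to lie in the positive-temperature pattern sum: one needs the number of patterns to be $e^{o(\sqrt n)}$, which caps $J$ at a fixed power of $\log n$, while at the same time $J$ must be a large enough power of $\log n$ for the pinning union bound over the $\Theta(n^{0.1})$ windows to beat every inverse polynomial in $n$; both requirements are comfortably met by $J\asymp(\log n)^5$, $K\asymp n^{0.9}$, but keeping the three scales $J$, $K$, and $N\asymp n/K$ compatible throughout is what makes the bookkeeping delicate.
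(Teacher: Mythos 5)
Your proof is correct, but it takes a genuinely different route from the paper's. The paper never restricts to paths touching $\V$ directly: it constrains the polymer only to have $\pi(s_i),\pi(t_i)\in\lb 0,J^{1/2}\rb$ (via the transversal-fluctuation \cref{lem:O1fluc}), introduces independent ``local highway'' polymers $\g_i$ in the boxes $\lb 0,2J\rb\times\lb s_i,t_i\rb$, and uses the ordering/coalescence coupling of \cref{lem:polymer_ordering} plus a pigeonhole over the highway's midpoint location $x_i^*$ to force $\pi(m_i)=x_i^*$ at cost $e^{-O(N\log\log n)}$, and then a second coupling and a pigeonhole over wall-intersection points to replace the random anchors $(x_i^*,m_i)$ by $(0,m_i)$. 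You instead use the full quenched strength of \cref{thm:pinning} to make the polymer actually hit $\V$ in each half-window $\V_{\lb s_i,m_i\rb}$, $\V_{\lb m_i,t_i\rb}$, and then argue purely at the level of partition functions: decompose $Z^{\cons}$ over the pattern of last/first hitting points $(a_i,b_i)$, bound each bulk factor by $e^{F_i^0+O(\log n)}$ using positivity of the weights (so that dropping the short segments $(0,m_i)\to(0,b_i)$ and $(0,a_{i+1})\to(0,m_{i+1})$ only costs the duplicated vertex weights), bound each window factor crudely by $e^{O(J\log n)}$, and pay an entropy $e^{O(N\log J)}$ for the pattern sum---all $o(\sqrt n)$. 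This avoids the coupling/coalescence machinery entirely (which the paper reuses elsewhere, e.g.\ in \cref{sec:near_V}), at the cost of leaning a bit harder on the positivity assumption \ref{P1} via ``$Z\ge 1$'', and with a slightly cruder but still ample error $O(NJ^2\log n)$; your zero-temperature reduction (a single pattern read off the constrained geodesic) matches the spirit of the paper's \cref{sec:zerocomp}. Two routine points you should make explicit in a full write-up: the parities work out (paths between the anchor points exist, so the ``$Z\ge1$'' step applies) precisely because $J,K$ are chosen even in \eqref{eq:JK}, and one needs a convention for degenerate segments such as $a_i=m_i=b_i$ or $b_N=n$; neither affects the argument.
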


We first prove the positive temperature result \eqref{eq:pos_temp_comparison}.
\subsection{Proof of \cref{thm:comparison}: positive temperature}\label{sec:poscomp}
Let us introduce some notation.
We denote by $\Pi^\cons$ the set of paths $\pi:(0,0)\to(0,n)$ satisfying $\pi(s_i),\pi(t_i)\in\lb 0,J^{1/2}\rb$ for all $i\in\lb 1,N\rb$ (the superscript ``$\cons$" is an abbreviation of ``constrained'').
Let $Z^\cons$ be the partition function with respect to $\Pi^\cons$, and let $F^\cons$ be the corresponding free energy:
\[
    Z^\cons\coloneqq \sum_{\pi\in \Pi^\cons}e^{H(\pi)},\qquad F^\cons\coloneqq  \log Z^\cons.
\]
Write $F\coloneqq F(0,0;0,n)$.
The following lemma, a direct consequence of \cref{lem:O1fluc}, shows that it suffices to prove \eqref{eq:pos_temp_comparison} with $F$ replaced by $F^\cons$.
\begin{lemma}[The polymer is constrained]\label{lem:cons_reduction}
    There exists an event $\A_1$ with $\P(\A_1)=1-o(1)$, such that on $\A_1$, it holds that $\Q^{(0,0;\,0,n)}(\Pi^\cons)= 1-o(1)$, i.e. $F = F^\cons + o(1)$.
\end{lemma}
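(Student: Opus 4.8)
The plan is to convert the statement into a claim about the quenched law of the polymer's horizontal coordinate, and then feed it the $O(1)$ transversal fluctuation estimate \cref{lem:O1fluc} at the $2N$ constraint heights. First I would note that $\Pi^\cons\neq\varnothing$ — for instance the path alternating between columns $0$ and $1$ passes through $(0,s_i)$ and $(0,t_i)$ for every $i$, since $K$ and $J$ are even — so $F^\cons$ is well-defined, and because $Z^\cons = Z(0,0;0,n)\,\Q^{(0,0;\,0,n)}(\Pi^\cons)$ one has the exact identity
\[
    F - F^\cons = -\log\Q^{(0,0;\,0,n)}(\Pi^\cons).
\]
Hence it suffices to produce an event $\A_1$ with $\P(\A_1)=1-o(1)$ on which $\Q^{(0,0;\,0,n)}$ of the paths $(0,0)\to(0,n)$ lying outside $\Pi^\cons$ is $o(1)$; on such an event $|F-F^\cons|=|\log(1-o(1))|=o(1)$.

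To build $\A_1$, for each $i\in\lb 1,N\rb$ I would apply \cref{lem:O1fluc} with endpoints $(0,0),(0,n)$, intermediate height $s$ equal to $s_i$ and then to $t_i$, and threshold $k=J^{1/2}$. The hypotheses hold once $n$ is large, since then $n\ge k_0$ and $J^{1/2}\ge k_0$, while $s_i,t_i\in\lb 0,n\rb$ always. Let $\A_1$ be the intersection over all $i\in\lb 1,N\rb$ of the good events, i.e.\ the event on which $\Q^{(0,0;\,0,n)}(\pi(s_i)>J^{1/2})\le C''e^{-C'''J^{1/2}}$ and $\Q^{(0,0;\,0,n)}(\pi(t_i)>J^{1/2})\le C''e^{-C'''J^{1/2}}$ for every $i$. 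A union bound over these $2N$ events together with \cref{lem:O1fluc} gives $\P(\A_1^c)\le 2N\cdot C\exp\!\bigl(-C'(J^{1/2})^{1/3}\bigr)$. On $\A_1$, since a path fails to lie in $\Pi^\cons$ precisely when $\pi(s_i)>J^{1/2}$ or $\pi(t_i)>J^{1/2}$ for some $i$, a second union bound yields $\Q^{(0,0;\,0,n)}(\text{paths outside }\Pi^\cons)\le 2N\cdot C''e^{-C'''J^{1/2}}$.

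The only substantive step is then to verify that both right-hand sides are $o(1)$, which is exactly what the choices of $J$ and $K$ in \eqref{eq:JK} are for: $N\asymp n^{0.1}$ is polynomial in $n$, while $J$ is a power of $\log n$ taken large enough (relative to the exponent $1/3$ supplied by \cref{lem:O1fluc}) that $N\exp(-C'J^{1/6})$ and $N\exp(-C'J^{1/2})$ both tend to $0$. This quantitative comparison is the point I would be most careful about, but it is routine, and — as the footnote to \eqref{eq:JK} notes — the relevant exponents can be enlarged freely if needed.
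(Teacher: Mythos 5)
Your overall strategy is the same one the paper intends (it dismisses this lemma as ``a direct consequence of \cref{lem:O1fluc}''): the exact identity $F-F^\cons=-\log\Q^{(0,0;\,0,n)}(\Pi^\cons)$, the observation that $\Pi^\cons\ne\varnothing$, and a union bound over the $2N$ constraint heights of the good events from \cref{lem:O1fluc} with threshold $k=J^{1/2}$. The quenched half of your conclusion is also fine: on $\A_1$ one gets $\Q^{(0,0;\,0,n)}\bigl((\Pi^\cons)^c\bigr)\le 2N\,C''e^{-C'''J^{1/2}}$, and since $J^{1/2}\asymp(\log n)^{5/2}\gg\log n$ while $N\asymp n^{0.1}$, this is indeed $o(1)$.

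The gap is in the step you call routine, namely the claim that $\P(\A_1^c)\le 2N\,C\exp\bigl(-C'J^{1/6}\bigr)\to 0$. With the paper's choice \eqref{eq:JK} one has $J\asymp(\log n)^5$, so $J^{1/6}\asymp(\log n)^{5/6}=o(\log n)$, and therefore $N\exp\bigl(-C'J^{1/6}\bigr)\asymp\exp\bigl(0.1\log n-C'(\log n)^{5/6}\bigr)\to\infty$: the union bound over $n^{0.1}$ heights does \emph{not} close. (Contrast this with \cref{lem:hwy_reduction}, where the threshold is $J^{3/4}$, the exponent $1/3$ produces $J^{1/4}\asymp(\log n)^{5/4}\gg\log n$, and the same union bound does work.) So your assertion that ``$J$ is a power of $\log n$ taken large enough relative to the exponent $1/3$'' is false for the exponent $5$ actually fixed in \eqref{eq:JK}; making it true requires either enlarging that exponent (e.g.\ $J^{1/4}\asymp(\log n)^{7/4}$, which is harmless in the later arguments, where only $J=o(K)$, $NJ^2=o(\sqrt n)$, $\log J\asymp\log\log n$, and $J^{1/4}\gg\log n$ are used) or replacing the crude exponent $1/3$ of \cref{lem:large_dev}/\cref{lem:O1fluc} by a sharper tail (cf.\ \cref{rem:optimal-exponents}, where the optimal exponent is $1$, giving failure probability $\exp(-C'J^{1/2})$, which does beat $N$). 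As written, your proof needs one of these repairs, and the needed modification should be stated explicitly rather than deferred to the footnote's ``exponents chosen arbitrarily.''
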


For $i\in\lb1,N\rb$ we write
\begin{equation}\label{eq:hwy_def1}
    \Pi_i\coloneqq \Pi(J^{1/2},s_i;J^{1/2},t_i).
\end{equation}
We say that a path $\g_i\in\Pi_i$ is a \emph{local highway} if it satisfies
\[
    \g_i\cap\V_{\lb s_i,\,\, s_i+J^{3/4}\rb} \ne \varnothing
    \quad\text{and}\quad
    \g_i\cap\V_{\lb t_i-J^{3/4},\,\, t_i\rb}\ne\varnothing,
\]
and we denote by $\Pi_i^\hwy$ the set of local highways $\g_i$.
The polymer measure on $\Pi_i$ is given by
\[
    \Q_i(\{\g_i\})\coloneqq \frac{e^{H(\g_i)}}{\sum_{\g_i\in\Pi_i}e^{H(\g_i)}} 
    \,\,\text{ for } \g_i\in\Pi_i.
\]
Similarly to \cref{lem:cons_reduction}, an application of \cref{lem:no_avoiding_initial_segment} shows that, typically, every $\g_i\sim\Q_i$ is a local highway:
\begin{lemma}[Every $\g_i$ is a local highway]\label{lem:hwy_reduction}
    There exists an event $\A_2$ with $\P(\A_2)=1-o(1)$, such that on $\A_2$ it holds that
    \[
        \Q_i\left(\Pi_i^\hwy\right)
        \ge 1-C''e^{-C''' J^{3/4}}\quad\text{for all } i\in\lb1,N\rb,
    \]
    where $C'',C'''>0$ are constants depending only on the law of $\o$.
\end{lemma}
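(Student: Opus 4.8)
The plan is to obtain \cref{lem:hwy_reduction} as a direct application of \cref{lem:no_avoiding_initial_segment} to each individual polymer measure $\Q_i$, followed by a union bound over $i\in\lb 1,N\rb$. By vertical translation-invariance, $\Q_i=\Q^{(J^{1/2},s_i;\,J^{1/2},t_i)}$ agrees in law (as a function of the relevant portion of $\o$) with $\Q^{(J^{1/2},0;\,J^{1/2},t_i-s_i)}$, so I would invoke \cref{lem:no_avoiding_initial_segment} with the substitutions $x=J^{1/2}$, $t=t_i-s_i$, and $k=J^{3/4}$, after translating the base height from $0$ to $s_i$. Under this translation, the event inside $\Q_i$ appearing in \cref{lem:no_avoiding_initial_segment} is precisely $\{\g_i:\g_i\cap\V_{\lb s_i,\,s_i+J^{3/4}\rb}=\varnothing$ or $\g_i\cap\V_{\lb t_i-J^{3/4},\,t_i\rb}=\varnothing\}=\Pi_i\setminus\Pi_i^\hwy$.

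The first step is to verify the hypotheses of \cref{lem:no_avoiding_initial_segment} for all $i\in\lb1,N\rb$ once $n$ is large. Since $J,K,n$ are even and $J^{1/4}\in 2\Z$, the quantities $J^{1/2},J^{3/4}$ are even integers and $t_i-s_i$ is an even integer, so $\Pi_i\ne\varnothing$. From $s_i=iK$, $t_i=(iK+2J)\wedge n$, and $N=\sup\{i:iK+J\le n\}$ one gets $J\le t_i-s_i\le 2J$ for every $i$. Hence for $n$ large we have $t_i-s_i\ge J\ge 2k_0(J^{1/2}+1)=2k_0(x+1)$, and $J^{3/4}\in\lb J^{1/2}+k_0,\,(t_i-s_i)-J^{1/2}-k_0\rb$, as required.

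The second step is the quantitative estimate. \cref{lem:no_avoiding_initial_segment} then gives, for $G=F$,
\[
    \P\left[\Q_i\bigl(\Pi_i\setminus\Pi_i^\hwy\bigr)>C''e^{-C'''J^{3/4}}\right]\le C\exp\bigl(-C'(J^{3/4})^{1/3}\bigr)=C\exp\bigl(-C'J^{1/4}\bigr),
\]
and for $G=L$ it gives $\P(\G_i\notin\Pi_i^\hwy)\le C\exp(-C'J^{1/4})$, where $\G_i$ is the leftmost geodesic $(J^{1/2},s_i)\to(J^{1/2},t_i)$; since $\Q_i=\d_{\{\G_i\}}$ at zero temperature, $\{\G_i\notin\Pi_i^\hwy\}=\{\Q_i(\Pi_i\setminus\Pi_i^\hwy)>C''e^{-C'''J^{3/4}}\}$ and the two statements coincide. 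Using $J^{1/4}\ge(\log n)^{5/4}$ from \eqref{eq:JK} and $N\asymp n^{0.1}$, a union bound over $i$ yields
\[
    \P\left[\exists\, i\in\lb 1,N\rb:\Q_i\bigl(\Pi_i\setminus\Pi_i^\hwy\bigr)>C''e^{-C'''J^{3/4}}\right]\le Cn^{0.1}\exp\bigl(-C'(\log n)^{5/4}\bigr)=o(1),
\]
since $(\log n)^{5/4}$ dominates $\log(n^{0.1})$. Taking $\A_2$ to be the complement of this bad event gives $\P(\A_2)=1-o(1)$, and on $\A_2$ we have $\Q_i(\Pi_i^\hwy)=1-\Q_i(\Pi_i\setminus\Pi_i^\hwy)\ge 1-C''e^{-C'''J^{3/4}}$ for every $i$, which is the claim.

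I do not expect a genuine obstacle here, as the content is entirely inherited from \cref{lem:no_avoiding_initial_segment}. The only points requiring care are the bookkeeping in the first step — in particular the edge block $i=N$, for which $t_i$ may equal $n$ rather than $iK+2J$ and so $t_i-s_i$ can be as small as $J$, which must still satisfy the hypotheses — and the verification that the polylogarithmic lower bound $J^{1/4}\gtrsim(\log n)^{5/4}$ is strong enough to absorb the polynomially-many ($N\asymp n^{0.1}$) terms in the union bound.
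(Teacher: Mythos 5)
Your proposal is correct and is essentially identical to the paper's proof, which also applies \cref{lem:no_avoiding_initial_segment} with $x=J^{1/2}$, $k=J^{3/4}$ (the paper simply writes $t=2J$), obtains the bound $C\exp(-C'J^{1/4})=C\exp(-C'(\log n)^{5/4})$ per index, and concludes by a union bound over $i\in\lb1,N\rb$. Your extra care with the edge block $i=N$ (where $t_i-s_i$ may be as small as $J$) and with verifying the hypotheses is a harmless refinement of the same argument.
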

\begin{proof}
    By taking $n$ sufficiently large, we can apply \cref{lem:no_avoiding_initial_segment} with $x=J^{1/2}$,\; $k=J^{3/4}$, and $t=2J$.
    Doing so, we obtain
    \[
        \P[\Q_i
        \bigl(
            \Pi_i^\hwy
        \bigr) \ge 1-C''e^{-C''' J^{3/4}}]
        \ge 1- C\exp\left(-C'\,J^{1/4}\right)=
        1-C\exp\left(-C'\,(\log n)^{5/4}\right).
    \]
    The lemma now follows by taking a union bound over $i\in\lb 1,N\rb$. 
\end{proof}

Roughly speaking, we will use the local highways to establish decay of correlation for the polymer.
This will lead to a proof of \eqref{eq:pos_temp_comparison}.
For $i\in\lb1,N\rb$ we define
\begin{equation}\label{eq:def_xistar}
    x_i^*\coloneqq \argmax_{x_i\in\lb 0,2J\rb} \,
    \Q_i\left(\{\g_i(m_i)=x_i\}\cap\{\g_i\in\Pi_i^\hwy\}\right),
\end{equation}
with some arbitrary deterministic rule for breaking ties.
We also write $x_0^*\coloneqq 0$ and $x_{N+1}^*\coloneqq 0$.
We define, in analogy with \eqref{eq:def_Fi},
\[
    F_i^*\coloneqq F(x_i^*,m_i;x_{i+1}^*,m_{i+1})-\o(x_{i+1}^*,m_{i+1})\quad\text{for } i\in\lb 0,N-1\rb
\]
and $F_N^* \coloneqq  F(x_N^*,m_N;x_{N+1}^*,m_{N+1})$.
We set
\[
    F^*\coloneqq \sum_{i=0}^{N} F_i^*
    \quad\text{and}\quad F^0\coloneqq \sum_{i=0}^{N}F_i^0.
\]

The approximation \eqref{eq:pos_temp_comparison} is an immediate consequence of the following two lemmas.
\begin{lemma}\label{lem:argmax_reduction}
    As $n\to\infty$,
    \[
        \frac{1}{\sqrt n}|F-F^*|\pto 0.
    \]
\end{lemma}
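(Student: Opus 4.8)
The plan is to show that $F$ and $F^*$ differ by $o(\sqrt n)$ in probability by first passing from $F$ to the constrained free energy $F^\cons$ (Lemma \ref{lem:cons_reduction}), and then reconstructing $F^\cons$ from the free energies $F_i^*$ by coalescing the polymer with the local highways $\g_i$ at the heights $m_i$. First I would recall that on the high-probability event $\A_1\cap\A_2$ (from Lemmas \ref{lem:cons_reduction} and \ref{lem:hwy_reduction}) we have $F=F^\cons+o(1)$ and each $\g_i$ is a local highway with quenched probability at least $1-C''e^{-C'''J^{3/4}}$. The key point is that a constrained path $\pi$ hits the short horizontal segment $\lb 0,J^{1/2}\rb\times\{s_i\}$ and a local highway $\g_i$ dips to $\V$ inside $\V_{\lb s_i,\,s_i+J^{3/4}\rb}$, so by planarity $\pi$ and $\g_i$ must cross between heights $s_i$ and $s_i+J^{3/4}$; likewise between $t_i-J^{3/4}$ and $t_i$. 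This pins down where $\pi$ can be at height $m_i$ (somewhere in $\lb 0,2J\rb$) and, crucially, lets us use the coalescence machinery of \cref{sec:polymer_ordering} to compare the free energy of $\pi$ routed through $(x_i^*,m_i)$ against the free energy routed through wherever $\pi$ actually is.

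The main step is a telescoping/splitting estimate. Writing $\mc S_i\coloneqq \lb0,J^{1/2}\rb\times\{s_i\}$ and $\mc T_i\coloneqq\lb0,J^{1/2}\rb\times\{t_i\}$, I would decompose $Z^\cons$ by summing over the entrance/exit heights of $\pi$ at each $\mc S_i,\mc T_i$ and the height $\pi(m_i)$. Using superadditivity of $Z$ (products lower-bound $Z^\cons$) in one direction and the local-highway intersection structure in the other, one gets a two-sided bound
\[
    \Bigl|F^\cons - \sum_{i=0}^N F_i^* - (\text{error terms})\Bigr| \le (\text{error terms}),
\]
where the error terms are of two types: (i) at most $N+1$ many ``local'' contributions, each being the discrepancy between $F(x_i^*,m_i;x_{i+1}^*,m_{i+1})$ and the analogous quantity with the true endpoints of $\pi$ at heights $m_i,m_{i+1}$, each bounded by a sum of $O(J)$ weights inside a box of size $O(J)\times O(J)$ near height $m_i$; and (ii) the contribution of the rare configurations where some $\g_i$ fails to be a local highway, which by Lemma \ref{lem:hwy_reduction} carries quenched weight $\le C''e^{-C'''J^{3/4}}$ and hence contributes at most $O(N)\cdot$(polynomially large free-energy gap)$\cdot e^{-C'''J^{3/4}}=o(1)$ after the $J\asymp(\log n)^5$ choice. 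For (i), each local error is $O\bigl(J\cdot\max\text{-weight in a polylog box}\bigr)=O(J\cdot(\log n)^{O(1)})$ with overwhelming probability (union bound over the $O(N\cdot J^2)$ many relevant weights, using subexponentiality, \cref{sec:model}\ref{P2}), so the total is $O(N\cdot J\cdot(\log n)^{O(1)}) = O(n^{0.1}\cdot(\log n)^{O(1)}) = o(\sqrt n)$. Summing the two types of errors gives $|F^\cons - F^*| = o(\sqrt n)$ with probability $1-o(1)$, and combining with $F=F^\cons+o(1)$ finishes the lemma.

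The hard part will be making the coalescence-based comparison between $F_i^*$ (endpoints $x_i^*,x_{i+1}^*$) and the contribution of $\pi$ restricted to the strip $\lb m_i,m_{i+1}\rb$ with its actual endpoints rigorous and two-sided: the upper bound on $F^\cons$ in terms of $\sum F_i^*$ needs the local highway on the ``other side'' to funnel the true path through a controlled neighborhood of $(x_i^*,m_i)$, while the lower bound needs superadditivity plus the observation that $x_i^*$ is, by definition \eqref{eq:def_xistar}, a near-optimal height in the sense that $\Q_i(\g_i(m_i)=x_i^*,\ \g_i\in\Pi_i^\hwy)\ge \frac{1}{2J+1}\Q_i(\Pi_i^\hwy)$, which loses only a $\log(2J+1)=O(\log\log n)$ in free energy. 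Keeping careful track of which free energies are being compared — and ensuring all the endpoint-swapping steps only move weights around inside $O(J)\times O(J)$ boxes — is where the bookkeeping is most delicate; everything else is a routine union bound over polylogarithmically-sized collections of subexponential weights.
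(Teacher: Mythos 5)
Your plan is workable, but it proves the lemma by a genuinely different route than the paper, and one worth contrasting. The paper's proof never compares block free energies at different endpoints at all: it couples the constrained polymer $\pi$ with the i.i.d.\ highways $(\g_i)$ via \cref{lem:polymer_ordering}, notes that on the highway event coalescence forces $\pi(m_i)=\g_i(m_i)$, and then uses the defining near-optimality of $x_i^*$ in \eqref{eq:def_xistar} to get $\QQ\bigl(\pi(m_i)=x_i^*\ \forall i\bigr)\ge\bigl(\tfrac{1}{2J+1}-C''e^{-C'''J^{3/4}}\bigr)^N$ as in \eqref{eq:pigeonhole}; taking logarithms gives $F^\cons-F^*\le CN\log\log n$ directly, with the reverse direction being the trivial superadditivity bound $F^*\le F$ plus \cref{lem:cons_reduction}. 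Your route instead decomposes $Z^\cons$ over the polymer's actual heights at the $m_i$ (paying $N\log(2J+1)$ by pigeonhole) and then compares, block by block, the free energy with the true endpoints to the one with endpoints $(x_i^*,m_i),(x_{i+1}^*,m_{i+1})$ up to polylog-local errors. That comparison is exactly the machinery the paper reserves for \cref{lem:iid_reduction}: it is not automatic, and requires the pinning-based factorization through a common segment $Z(0,a;0,b)$ (\cref{thm:pinning}/\cref{lem:no_avoiding_initial_segment} applied to the block polymers, then the analogue of \eqref{eq:pigeon1}--\eqref{eq:pigeon3}), together with a union bound over all $O(NJ^2)$ endpoint pairs per block since the maximizing heights are random; you flag this as the hard part and name the right tools, so it can be made rigorous, but it makes this lemma carry the weight of both \cref{lem:argmax_reduction} and \cref{lem:iid_reduction}, whereas the paper's argument for this step is purely probabilistic and needs no weight-maximum or endpoint union bounds.

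Two small inaccuracies to repair if you execute this plan. First, concatenation lower-bounds $Z$, not $Z^\cons$ (a path through the points $(x_i^*,m_i)$ need not satisfy the constraints at $s_i,t_i$), so the easy direction should be $F^*\le F$ followed by $F=F^\cons+o(1)$ on $\A_1$. Second, the highways do not ``funnel the true path through a controlled neighborhood of $(x_i^*,m_i)$''; in your framework the comparison between the true routing and the starred routing comes from both block polymers being pinned to $\V$ near the block boundaries, and the $\g_i$ (and the event of \cref{lem:hwy_reduction}) are not actually needed except through the definition of $x_i^*$ itself --- indeed any deterministic choice of heights in $\lb 0,2J\rb$ would serve equally well in your argument, which is a sign that it is really the \cref{lem:iid_reduction} mechanism in disguise.
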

\begin{lemma}\label{lem:iid_reduction}
    As $n\to\infty$,
    \[
        \frac{1}{\sqrt n}\left|F^*-F^0\right|\pto 0.
    \]
\end{lemma}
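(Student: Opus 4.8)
The plan is to show that $F^0_i$ and $F^*_i$ differ negligibly for each $i$, and then sum the errors. Recall $F^0_i = F(0,m_i;0,m_{i+1}) - \o(0,m_{i+1})$ while $F^*_i = F(x^*_i,m_i;x^*_{i+1},m_{i+1}) - \o(x^*_{i+1},m_{i+1})$, where the endpoints $x^*_i$ lie in $\lb 0,2J\rb$ and are defined via the local highway construction. Both $F^0_i$ and $F^*_i$ are free energies across a vertical strip of height $m_{i+1}-m_i \asymp K \asymp n^{0.9}$, with endpoints displaced horizontally by at most $2J \asymp (\log n)^5 = o(K)$. The key point is that, under LLN separation, moving the endpoint of a polymer by an $O(J)$ amount along the wall changes the free energy by at most $O(J \cdot \polylog(n))$-ish — in fact we should be able to get a bound that is a (random) quantity with controlled tails, uniformly in $i$.

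The key steps, in order. First I would prove a deterministic-in-$\o$ comparison: using superadditivity and positivity of weights (as in the proof of \cref{lem:pin_lemma} and \cref{lem:varatmostlinear}), bound $|F(x_i^*,m_i;x_{i+1}^*,m_{i+1}) - F(0,m_i;0,m_{i+1})|$ by a sum of free energies over short ``connector'' paths that route between the two endpoint pairs near the wall, plus $O(J)\log 4$ entropy terms; each connector free energy is itself bounded by $L$ over a box of area $O(J^2)$, hence by $\sum \o$ over that box. The extra $\o(x^*_{i+1},m_{i+1})$ vs $\o(0,m_{i+1})$ terms are single weights. Second, I would control the resulting bound in probability: each connector contributes at most $\sum_{(y,s)\in \text{box}} \o(y,s)$ over a box of area $O(J^2) \asymp \polylog(n)$; since the weights are subexponential (\ref{P2}), a union bound over the $O(J^2)$ sites and over $i\in\lb 1,N\rb$ shows that with $\P$-probability $1-o(1)$ every connector box has all weights $\le (\log n)^{2}$ say, so each $|F^*_i - F^0_i| \ls J^2 (\log n)^2 + J \asymp \polylog(n)$ on a good event. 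Third, summing over the $N \asymp n^{0.1}$ indices: on the good event $|F^* - F^0| \le \sum_i |F^*_i - F^0_i| \ls N\cdot \polylog(n) \asymp n^{0.1}\polylog(n) = o(\sqrt n)$. Dividing by $\sqrt n$ and using that the good event has probability $1-o(1)$ gives the claimed convergence in probability.

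The one subtlety — and the step I expect to be the main obstacle — is making the deterministic endpoint-comparison genuinely clean. One has to argue in both directions ($F^0_i \le F^*_i + (\text{error})$ and vice versa), and the natural route, as in \eqref{eq:var_upper_1}, is to restrict the sum over paths to those passing through a favorable near-wall site at heights $m_i, m_{i+1}$ and pay a $-\log(\text{polymer mass there})$ penalty; but here, unlike in \cref{lem:varatmostlinear}, we are changing the \emph{endpoints} rather than resampling a row, so the cleanest version instead glues: $Z(0,m_i;0,m_{i+1}) \ge Z(0,m_i;1,m_i+1)\,Z(1,m_i+1;x^*_i+1,m_i+1)\cdots$ — i.e. horizontal ``walks along a nearly-flat row'' aren't legal lattice moves, so one must instead route via a short diagonal detour that costs $O(J)$ vertical height, contributing a free energy over an $O(J)\times O(J)$ triangle. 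This is exactly the ``connector box'' mentioned above; the bookkeeping (which corners, which heights, ensuring the detours stay in $\H$) is routine given $J = o(K)$ but needs care. Everything else — the subexponential union bound and the summation over $N$ indices — is standard and mirrors arguments already carried out in \cref{sec:lln,sec:linear_variance}. Alternatively, and perhaps more transparently, one can invoke \cref{lem:O1fluc} to say that the polymer $(0,m_i)\to(0,m_{i+1})$ has endpoint fluctuations that make the $x^*_i$-shifted free energy comparable up to $\polylog(n)$ with high probability, but the direct gluing bound is self-contained and suffices.
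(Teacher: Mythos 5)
There is a genuine gap, and it sits exactly at the step you flag and then dismiss as ``routine bookkeeping'': the two-sided deterministic comparison of $F_i^0$ and $F_i^*$ is false in one direction. The direction $F_i^0 \le F_i^* + O\bigl(J\sup_{\BB_i\cup\BB_{i+1}}\o\bigr) + O(J)$ does work deterministically: any wall-to-wall path $\pi:(0,m_i)\to(0,m_{i+1})$ can be matched, from height $m_i+x_i^*$ up to height $m_{i+1}-x_{i+1}^*$, by a path started at $(x_i^*,m_i)$, since the forward cone of $(x_i^*,m_i)$ contains every point $(\pi(m_i+s),m_i+s)$ with $s\ge x_i^*$; the discarded pieces live in $O(J)\times O(J)$ boxes and the map costs only $4^{O(J)}$ in entropy. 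But the reverse direction fails: a path from $(x_i^*,m_i)$ with $x_i^*>0$ can hug its right-most ray and collect weights at the sites $(x_i^*+s,\,m_i+s)$, $s\ge 0$, none of which is reachable from $(0,m_i)$ (reachability requires $x_i^*+s\le s$). This inaccessible diagonal region extends order $K\asymp n^{0.9}$ up the strip, so in an adversarial environment $F_i^*-F_i^0$ is of order $K$, not $\polylog(n)$; no gluing through ``connector boxes'' near the endpoints can repair this, because the discrepancy is not localized near the endpoints. Your fallback remark about \cref{lem:O1fluc} points at the right ingredient (a probabilistic, not deterministic, input), but a transversal-fluctuation bound at a single height is not by itself enough: you need pinning of the shifted-endpoint polymer to $\V$ near the top and bottom of the strip, and then a mechanism to convert that pinning into cancellation of the bulk contribution.

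That is what the paper's proof supplies. Using the coupling with the local highways $\g_i,\g_{i+1}$ (planarity plus \cref{lem:polymer_ordering}, on the event $\A_2$ of \cref{lem:hwy_reduction}, which is where LLN separation enters), both the wall-to-wall polymer and the $x^*$-to-$x^*$ polymer are forced, up to $\Q$-mass $o(1)$, to pass through wall points $(0,a)$ with $a\in\lb t_i-J^{3/4},t_i\rb$ and $(0,b)$ with $b\in\lb s_{i+1},s_{i+1}+J^{3/4}\rb$. This yields the factorizations \eqref{eq:z0decomp}--\eqref{eq:z*decomp} in which the enormous common factor $Z(0,a;0,b)$ (of size $e^{\Theta(K)}$) appears in both expressions; a pigeonhole over the $O(J^{3/2})$ choices of $(a,b)$ then cancels it, leaving only boundary free energies supported in $\BB_i$, $\BB_{i+1}$ plus a $\log J$ term, which are controlled by subexponential tails exactly as in your Step 2. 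So your tail estimates and the summation over $N\asymp n^{0.1}$ blocks are fine, but the heart of the lemma is the probabilistic cancellation of the bulk factor via pinning and coalescence, which your deterministic Step 1 cannot deliver.
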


It remains to prove \cref{lem:argmax_reduction,lem:iid_reduction}.
Let $\Q^\cons$ be the polymer measure on $\Pi^\cons$, i.e. $\Q^\cons(\{\pi\})\propto e^{H(\pi)}$ for $\pi\in\Pi^\cons$.
Let $\pi\sim\Q^\cons$ be the corresponding polymer.
Consider also polymers $\g_i\sim\Q_i$ for each $i\in\lb1,N\rb$.
By the definitions of $\Pi^\cons$ and $\Pi_i$, we have that 
\[
    \pi(s_i)\le \g_i(s_i)\quad\text{and}\quad \pi(t_i)\le\g_i(t_i),\quad\text{for all } i\in\lb 1,\,N\rb.
\]
Therefore, fixing a realization of the environment $\o$, we can apply \cref{lem:polymer_ordering} conditionally given the points $\{\pi(s_i),\pi(t_i) : i\in\lb 1,N\rb\}$ to obtain a coupling of $\pi,(\g_i)_{i\in\lb 1, N\rb}$ under which $\pi$ lies to the left of $\g_i$ and $\pi\cap\g_i$ is connected, for all $i\in\lb 1,N\rb$.
Averaging over $\{\pi(s_i),\pi(t_i) : i\in\lb 1,N\rb\}$ then yields a coupling $\QQ$ of the unconditional measures $\Q^\cons$ and $\bigotimes_{i\in\lb1,N\rb}\Q_i$ with the same polymer ordering and coalescence properties.

We write
\[
    \BB_i\coloneqq  \lb 0, 2J\rb \times \lb s_i,t_i\rb\quad\text{for all } i\in\lb 1,N\rb.
\]
\begin{proof}[Proof of \cref{lem:argmax_reduction}]
By increasing $n$, we can assume that \emph{every} path $(J^{1/2},s_i)\to(J^{1/2},t_i)$ is contained in $\BB_i$.
By the pigeonhole principle and \eqref{eq:def_xistar},
\[
    \QQ\bigl(\g_i(m_i)=x_i^*\bigr)\ge \frac{1}{2J+1}\quad\text{for all } i\in\lb1,N\rb.
\]
Then by \cref{lem:hwy_reduction}, it holds for all $\o\in\A_2$ that
\begin{equation}\label{eq:pigeonhole1}
    \QQ\left(\{\g_i(m_i)=x_i^*\}\cap\{\g_i\in\Pi_i^\hwy\}\right) > \frac{1}{2J+1} - C''e^{-C''' J^{3/4}}.
\end{equation}
On the other hand, consider a sample $(\pi,(\g_i)_{i\in\lb1,N\rb})\sim\QQ$.
If $\g_i\in\Pi_i^\hwy$ then by planarity,
\[
    \g_i\cap\pi\cap \H_{\lb s_i,\,\,s_i+J^{3/4}\rb} \ne \varnothing
    \quad\text{and}\quad
    \g_i\cap\pi\cap \H_{\lb t_i-J^{3/4},\,\,t_i\rb} \ne\varnothing.
\]
Since $\pi,\g_i$ coalesce under $\QQ$, the above display implies that $\g_i(m_i)=\pi(m_i)$ (see also \cref{fig:local_highways}).
Therefore, from \eqref{eq:pigeonhole1} and the fact that the $\g_i$ are i.i.d., we conclude that for $\o\in\A_2$,
\begin{equation}\label{eq:pigeonhole}
    \QQ\bigl(\pi(m_i)=x_i^* \text{ for all } 
     i\in\lb1,N\rb\bigr)
     \ge \left(\frac{1}{2J+1} - C''e^{-C''' J^{3/4}}\right)^N.
\end{equation}
Taking logarithms in \eqref{eq:pigeonhole} and recalling that $J\asymp(\log n)^5$, we get
\[
    F^* \ge F^\cons - CN\log\log n.
\]
We also have the deterministic inequality $F^*\le F^\cons.$
Recalling that $N\asymp n^{0.1}$, we obtain
\[    
    \frac{|F^\cons-F^*|}{\sqrt n}\le C\frac{\log\log n}{n^{0.4}} = o(1).
\]
\cref{lem:argmax_reduction} now follows by applying \cref{lem:cons_reduction,lem:hwy_reduction}.
\end{proof}

\begin{figure}[t]
    \centering
    \begin{tikzpicture}[]
        \fill[black!15!white] (0,0.5) rectangle (4,4.5);
        \draw[color=gray, very thick, dotted](0,0.5)--(4,0.5)node[right]{$\lb 0,2J\rb\times\{s_i\}$};
        \draw[color=gray, very thick, dashed](0,2.5)--(4,2.5)node[right]{$\lb 0,2J\rb\times\{m_i\}$};
        \draw[color=gray, very thick,dotted](0,4.5)--(4,4.5)node[right]{$\lb 0,2J\rb\times\{t_i\}$};

        \draw[color=gray, thick, |<->|](-0.5,0.5)--(-0.5,1.25) node[midway, left]{$J^{3/4}$};
        \draw[color=gray, thick, |<->|](-0.5,3.75)--(-0.5,4.5) node[midway, left]{$J^{3/4}$};

        \draw[color=black, very thick] (0,-0.2)node[left]{$\V$} -- (0,5.1);
    
        \draw[color={DarkOrange1}, very thick]
        (0.3,-0.4)node[right]{$\pi$}
        to[out=110,in=-90] (0.1,0.67);

        \draw[color={DarkOrange1}, very thick]
        (0.1,4.35)to[out=90,in=-100] (0.2,5.3);
   
        \draw[blue,very thick]
        (0.25,0.5)node[below right]{$\g_i$}
        to[out=100,in=-45] (0,0.75) 
        to[out=75,in=-100] ++(0.2,0.2)
        to[out=100,in=-90]++(-0.2,0.3)
        to[out=100,in=-120]++(0.3,0.2)
        to[out=45,in=-80]++(0.3,0.2)
        to[out=100,in=-90]++(-0.6,0.5)
        to[out=80,in=-90] (0.5,2.5) 
        to[out=100,in=-45] ++(-0.2,0.3)
        to[out=140,in=-45] ++(-0.3,0.4)
        to[out=80,in=-100] ++(0.1,0.1)
        to[out=100,in=-70]++(-0.1,0.1)
        to[out=70,in=-100]++(0.2,0.25)
        to[out=90,in=-75] (0,4.2) 
        to[out=90,in=-120] (0.25,4.5);
    
        \draw[blue, fill] (0.25,0.5) circle [radius = 0.05];
        \draw [blue, fill] (0.25,4.5) circle [radius = 0.05];
        \draw[blue, fill] (0,0.75) circle [radius = 0.05];
        \draw [blue, fill] (0,4.2) circle [radius = 0.05];

\end{tikzpicture}
    
\caption{The box $\BB_i=\lb 0,2J\rb\times\lb s_i,t_i\rb$ is depicted as a shaded gray square. 
The polymer $\g_i$ (blue) hits the initial and final height-$J^{3/4}$ segments of $\BB_i\cap\V$, and is therefore a local highway: $\g_i\in\Pi_i^\hwy$.
By definition, the constrained polymer $\pi\sim\Q^\cons$ (orange) crosses the bottom dotted line segment (height $s_i$) between $\V$ and the starting point of $\g_i$, and similarly $\pi$ crosses the top dotted line segment (height $t_i$) between $\V$ and the ending point of $\g_i$.
Planarity entails that $\pi, \g_i$ intersect within the initial and final strips of height $J^{3/4}=o(J)$, which implies coalescence outside of these strips.
In particular, $\pi,\g_i$ coincide on the dashed midway line $\lb 0,2J\rb\times\{m_i\}$.
}
\label{fig:local_highways}
\end{figure}

We briefly explain the intuition for \cref{lem:iid_reduction}.
By \eqref{eq:def_xistar}, the vector $x^*=(x_1^*,\dots,x_N^*)$ depends only on the weights inside of $\BB\coloneqq \bigcup_{i\in \lb1,N\rb}\BB_i$.
Since the volume $|\BB| = o(\sqrt n)$, the polymer's behavior within $\BB$ is negligible on the diffusive scale.
It is therefore probabilistically inexpensive to replace each $(x_i^*,m_i)$ by a nearby deterministic point, namely $(0,m_i)$.

\begin{proof}[Proof of \cref{lem:iid_reduction}]
Let $\pi\sim \Q^\cons$ and let $\pi^0$ be sampled from the conditional measure
\[
    \pi^0\sim \Q^0(\cdot)\coloneqq \Q\bigl(\,\cdot\given[\pi^0(m_i)=0 \text{ for all } i\in\lb 1,N\rb]\bigr).
\]
We apply \cref{lem:polymer_ordering} conditionally given the points $\{\pi(m_i):i\in\lb 1,N\rb\}$ to obtain a coupling $\QQ'$ of $\Q^0,\Q^\cons$ under which $\pi^0$ lies to the left of $\pi$ and
\[
    \pi^0\cap\pi\cap\H_{\lb m_i,\;m_{i+1}\rb}\text{ is connected, for all } i\in\lb 1,N-1\rb.
\]
Our earlier construction of $\QQ$ can be lifted along with $\QQ'$ to a coupling $\QQ''$ of the measures $\Q^0,\Q^\cons,\bigotimes_{i\in\lb 1,N\rb}\Q_i$, i.e. of the triple $(\pi^0,\pi,(\g_i)_{i\in\lb 1,N\rb})$, under which
\begin{enumerate}[label=(Q\arabic*)]
    \item \label{Q1} $\pi^0$ lies to the left of $\pi$,
    \item \label{Q2} $\pi$ lies to the left of $\g_i$ for all $i\in\lb 1,N\rb$,
    \item \label{Q3} $\pi\cap\g_i$ is connected for all $i\in\lb 1,N\rb$, and
    \item \label{Q4} $\pi^0\cap\pi\cap\H_{\lb m_i,\;m_{i+1}\rb}$ is connected for all $i\in\lb 1,N-1\rb$.
\end{enumerate}

Fix $\o\in\A_2$.
Consider $(\pi^0,\pi,(\g_i)_{i\in\lb 1,N\rb})\sim\QQ''$.
Fix $i\in\lb 1,N-1\rb$.
By \cref{lem:hwy_reduction}, 
\begin{equation}\label{eq:overlap0}
    \QQ''\left(\g_i\in\Pi_i^\hwy \quad\text{and}\quad \g_{i+1}\in\Pi_{i+1}^\hwy\right)
    =1-o(1).
\end{equation}
Suppose that $\g_i\in\Pi_i^\hwy$ 
and $\g_{i+1}\in\Pi_{i+1}^\hwy$.
Then by planarity and \ref{Q2}, \ref{Q3} (cf. the above proof of \cref{lem:argmax_reduction}),
\begin{equation}\label{eq:overlap1}
    \pi\cap \V_{\lb t_i-J^{3/4},\,\,t_i\rb} \ne \varnothing
    \quad\text{and}\quad
    \pi\cap \V_{\lb s_{i+1},\,\,s_{i+1}+J^{3/4}\rb} \ne\varnothing.
\end{equation}
Therefore, by planarity and \ref{Q1},
\begin{equation}\label{eq:overlap2}
    \pi^0\cap\pi\cap \V_{\lb t_i-J^{3/4},\,\,t_i\rb} \ne \varnothing
    \quad\text{and}\quad
    \pi^0\cap\pi\cap \V_{\lb s_{i+1},\,\,s_{i+1}+J^{3/4}\rb} \ne\varnothing.
\end{equation}
Moreover, \eqref{eq:overlap1} implies $\pi(m_i)=x_i^*$ and $\pi(m_{i+1})=x_{i+1}^*$.
We rearrange \eqref{eq:overlap0} and sum over the possible intersection points in \eqref{eq:overlap1}, \eqref{eq:overlap2} to conclude that
\begin{equation}\label{eq:z0decomp}
    \exp(F_i^0) = (1+o(1))\sum_{a=t_i-J^{3/4}}^{t_i} \sum_{b=s_{i+1}}^{s_{i+1}+J^{3/4}} Z(0,m_i;1,a-1)\, Z(0,a;0,b)\, Z(1,b+1;0,m_{i+1}),
\end{equation}
and that
\begin{equation}\label{eq:z*decomp}
    \exp(F_i^*) = (1+o(1))\sum_{a=t_i-J^{3/4}}^{t_i} \sum_{b=s_{i+1}}^{s_{i+1}+J^{3/4}} Z(x_i^*,m_i;1,a-1)\, Z(0,a;0,b)\, Z(1,b+1;x_{i+1}^*,m_{i+1}).
\end{equation}
By \eqref{eq:z0decomp} and the pigeonhole principle, there exist
$a_i\in\lb t_i-J^{3/4},\;t_i\rb$ and $b_i\in \lb s_{i+1},\;s_{i+1}+J^{3/4}\rb$ such that
\begin{equation}\label{eq:pigeon1}
    \frac{1}{J^{3/2}}\exp(F^0_i) \ls  Z(0,m_i;1,a_i-1)\, Z(0,a_i;0,b_i)\, Z(1,b_i+1;0,m_{i+1}).
\end{equation}
On the other hand, as the summands in \eqref{eq:z*decomp} are nonnegative, we have
\begin{equation}\label{eq:pigeon2}
    \exp(F_i^*) \ge (1+o(1))Z(x_i^*,m_i;1,a_i-1)\, Z(0,a_i;0,b_i)\, Z(1,b_i+1;x_{i+1}^*,m_{i+1}).
\end{equation}
Let us emphasize that the \emph{same} factor $Z(0,a_i;0,b_i)$ appears in \eqref{eq:pigeon1} and \eqref{eq:pigeon2}---this can be interpreted as a manifestation of coalescence, cf. \ref{Q4}.
Combining \eqref{eq:pigeon1} and \eqref{eq:pigeon2} therefore yields 
\begin{equation*}
    \begin{split}
        F_i^0 - F_i^* &\ls F(0,m_i;1,a_i-1) + F(1,b_i+1;0,m_{i+1})
        \\
        &\qquad - F(x_i^*,m_i;1,a_i-1)-F(1,b_i+1;x_{i+1}^*,m_{i+1}) + \log J.
    \end{split}
\end{equation*}
Now by interchanging the roles of $F^0_i$ and $F^*_i$ in \eqref{eq:pigeon1} and \eqref{eq:pigeon2}, and recalling that $J\asymp(\log n)^5$, we conclude that
\begin{equation}\label{eq:pigeon3}
    \begin{split}
        |F_i^0-F_i^*| &\ls F(0,m_i;1,a-1)+ F(1,b+1;0,m_{i+1})\\
        &\qquad + F(x_i^*,m_i;1,a^*-1) +F(1,b^*+1;x_{i+1}^*,m_{i+1})   + \log \log n
\end{split}
\end{equation}
for some $a,a^*\in\lb t_i-J^{3/4},\,\,t_i\rb$ and some $b,b^*\in\lb s_{i+1},\,\, s_{i+1}+J^{3/4}\rb$.

We now observe that the terms on the right side of \eqref{eq:pigeon3} are typically of order $o\left(\sqrt{n}/N\right)$.
For instance, we have the deterministic inequality
\begin{equation}\label{eq:crude_upper}
    \begin{split}
        F(0,m_i;1,a-1) &\le L(0,m_i;1,a-1) + \log|\Pi(0,m_i;1,a-1)|\\
        &\le C(\log n)^{C'} \sup_{v\in\BB_i}\o(v) + C(\log n)^{C'}
    \end{split}
\end{equation}
for some absolute constants $C,C'$.
A union bound and the fact that the weights are subexponential 
(\cref{sec:model}\ref{P2})
shows that
\footnote{For our purposes the exponent $0.01$ can be replaced with any other constant $C>0$ satisfying $n^{C}=o\left(\sqrt{n}/N\right)$.}
\begin{equation}\label{eq:unionbound}
    \P[\sup_{v\in\BB_i}\o(v) > n^{0.01}] \le Ce^{-C'n^{0.01}}.
\end{equation}
Analogous estimates apply to the other terms on the right side of \eqref{eq:pigeon3}, and we conclude that there exists an event $\B_i$ with
\begin{equation}\label{eq:stretchedexpBi}
    \P(\B_i) \ge 1-Ce^{-C'n^{0.01}}
    \quad\text{and}\quad
    |F_i^0-F_i^*|=o\left(\sqrt n/N\right) \text{ on } \B_i\cap\A_2.
\end{equation}

As for the case $i\in\{0,N\}$, we note that since $\pi^0,\pi$ both start at $(0,0)$ and end at $(0,n)$, the coalescence argument above allows us to assume that $\pi^0,\pi$ coincide on $\H_{\lb 0,\, s_1\rb}\cup \H_{\lb t_N,\, n\rb}$.
This produces a decomposition analogous to \eqref{eq:z0decomp}, \eqref{eq:z*decomp}, but with the sum over only one intersection point.
The rest of the above analysis applies verbatim, and we conclude
\cref{lem:iid_reduction} by combining \eqref{eq:stretchedexpBi} with a union bound over $i\in\lb0,N\rb$.
\end{proof}


\subsection{Proof of \cref{thm:comparison}: zero temperature}\label{sec:zerocomp}

We denote by $\G$ the left-most geodesic $(0,0)\to(0,n)$, and by $\g_i$ the left-most geodesic $(J^{1/2},s_i)\to(J^{1/2},t_i)$, for $i\in\lb1,N\rb$.

By the correspondence of \cref{sec:pos-zero}, the proofs of \cref{lem:cons_reduction,lem:hwy_reduction} also imply the analogous zero temperature statements:
\begin{lemma}[The geodesic is constrained]
    There exists an event $\EE_1$ with $\P(\EE_1)=1-o(1)$, such that $\G\in\Pi^\cons$ on $\EE_1$.
\end{lemma}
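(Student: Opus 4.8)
The plan is to take $\EE_1 \coloneqq \{\G \in \Pi^\cons\}$, so that $\G \in \Pi^\cons$ holds on $\EE_1$ by construction and the entire content of the lemma is the bound $\P(\EE_1) = 1 - o(1)$. (Equivalently, one could check that the proof of the positive-temperature statement \cref{lem:cons_reduction} tropicalizes under the dictionary of \cref{sec:pos-zero}; since at zero temperature $\d_{\{\G\}}(\Pi^\cons) \in \{0,1\}$, it is cleanest to argue directly, as below.) By the definition of $\Pi^\cons$,
\[
    \EE_1^c = \bigcup_{i=1}^{N}\Bigl(\{\G(s_i) > J^{1/2}\} \cup \{\G(t_i) > J^{1/2}\}\Bigr).
\]

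For a fixed $i \in \lb 1,N\rb$ and $s \in \{s_i,t_i\}$, I would apply the $G = L$ case of \cref{lem:O1fluc} to the left-most geodesic $\G\colon (0,0) \to (0,n)$ with $x_1 = x_2 = 0$, $t = n$, and $k = J^{1/2}$. The hypotheses are met once $n$ is large: $J \asymp (\log n)^5 \to \infty$, so eventually $J^{1/2} \ge k_0$; $n$ is even, so $\Pi(0,0;0,n) \ne \varnothing$; and $s \in \lb 0,n\rb$ because $s_i = iK$ and $t_i = (iK + 2J)\wedge n$ with $iK + J \le n$. This gives $\P(\G(s) > J^{1/2}) \le C\exp\bigl(-C'(J^{1/2})^{1/3}\bigr)$. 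A union bound over the $2N$ events above then yields
\[
    \P(\EE_1^c) \le 2N \cdot C\exp\bigl(-C' J^{1/6}\bigr),
\]
which is $o(1)$ by the choice of $J$ and $K$ in \eqref{eq:JK} (whose exponents are, per the accompanying footnote, chosen large enough precisely so that such union bounds converge): $N \asymp n^{0.1}$ is polynomially bounded in $n$, while the per-height tail is stretched-exponentially small. Hence $\P(\EE_1) = 1 - o(1)$, as required.

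The argument presents no genuine obstacle — it is just a union bound layered on \cref{lem:O1fluc}, essentially the content of \cref{rem:reformulating_pinning} applied at the $2N$ constraint heights. The only point demanding care is the arithmetic of that final union bound: because the number of constraint heights grows polynomially (like $N \asymp n^{0.1}$), the per-height tail from \cref{lem:O1fluc} must be driven below $n^{-0.1}$, which is exactly why $J$ is taken to be a sufficiently large polylogarithmic power of $n$ in \eqref{eq:JK}. Alternatively, one could first establish the positive-temperature \cref{lem:cons_reduction} by running the same union bound on the quenched probabilities $\Q^{(0,0;\,0,n)}\bigl(\pi(s_i) > J^{1/2}\bigr)$ via the quenched bound in \cref{lem:O1fluc}, and then deduce the present lemma from \cref{sec:pos-zero}.
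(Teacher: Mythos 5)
Your overall route is the same as the paper's: the paper proves this lemma in one line by tropicalizing \cref{lem:cons_reduction} via \cref{sec:pos-zero}, and \cref{lem:cons_reduction} itself is asserted there as a direct consequence of \cref{lem:O1fluc}, i.e. exactly the union bound over the $2N$ constraint heights that you write out (and your verification of the hypotheses of \cref{lem:O1fluc} is fine). The problem is the closing arithmetic, which you assert but which fails with the stated parameters. With $k=J^{1/2}$ and $J\asymp(\log n)^5$, the per-height annealed bound from \cref{lem:O1fluc} is
\[
\P\bigl(\G(s)>J^{1/2}\bigr)\le C\exp\bigl(-C'J^{1/6}\bigr)=C\exp\bigl(-C'(\log n)^{5/6}\bigr),
\]
and since $(\log n)^{5/6}=o(\log n)$ this is \emph{not} $o(n^{-0.1})$; it is in fact larger than $n^{-\e}$ for every fixed $\e>0$ once $n$ is large. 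Hence your union bound gives
\[
\P(\EE_1^c)\le 2N\cdot C\exp\bigl(-C'J^{1/6}\bigr)\asymp \exp\bigl(0.1\log n-C'(\log n)^{5/6}\bigr)\longrightarrow\infty,
\]
which is vacuous. Your reading of the footnote to \eqref{eq:JK} ("chosen so that such union bounds converge") is accurate only for \cref{lem:hwy_reduction}, where the relevant exponent is $(J^{3/4})^{1/3}=J^{1/4}\asymp(\log n)^{5/4}\gg\log n$; at constraint level $J^{1/2}$ the $1/3$-power bound does not beat the polynomially many heights.

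Concretely, the gap is the final claim "which is $o(1)$ by the choice of $J$ and $K$." To close it one needs either a larger $J$ (any power of $\log n$ strictly greater than $6$, so that $J^{1/6}\gg\log n$ — harmless in spirit, since the paper's footnote says the exponents are arbitrary, but then one is no longer working with the paper's literal parameters), or an annealed tail with exponent strictly greater than $2/5$ in place of $1/3$ (e.g. the optimal exponent $1$ discussed in \cref{rem:optimal-exponents}), since one needs $(J^{1/2})^{\alpha}\gg\log n$ with $J^{1/2}\asymp(\log n)^{5/2}$. To be fair, the same issue is latent in the paper's own terse assertion that \cref{lem:cons_reduction} is a "direct consequence" of \cref{lem:O1fluc}; but as written, your proof's last step fails and should be repaired along one of these lines.
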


\begin{lemma}[Every geodesic $\g_i$ is a local highway]
    There exists an event $\EE_2$ with $\P(\EE_2)=1-o(1)$, such that on $\EE_2$ it holds that $\g_i\in\Pi_i^\hwy$ for all $i\in\lb 1,N\rb$.
\end{lemma}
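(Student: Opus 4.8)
The plan is to transcribe the proof of \cref{lem:hwy_reduction} into the zero-temperature setting via the correspondence of \cref{sec:pos-zero}. That proof consisted of a single application of \cref{lem:no_avoiding_initial_segment} in its $G=F$ form, followed by a union bound over $i\in\lb 1,N\rb$; here I would simply invoke the $G=L$ clause of \cref{lem:no_avoiding_initial_segment} in its place.

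In detail: fix $i\in\lb 1,N\rb$. By vertical translation-invariance of the environment, $\g_i$ has the law of the left-most geodesic $(J^{1/2},0)\to(J^{1/2},\,t_i-s_i)$, where $t_i-s_i=2J$ for $i\le N-1$ and $t_i-s_i\in[J,2J)$ for $i=N$ (using $NK+J\le n$). Since $J^{3/4}+J^{1/2}+k_0=o(J)$, for $n$ large enough we have $t_i-s_i\ge 2k_0(J^{1/2}+1)$ and $J^{3/4}\in\lb J^{1/2}+k_0,\;(t_i-s_i)-J^{1/2}-k_0\rb$, so the $G=L$ part of \cref{lem:no_avoiding_initial_segment}, applied with $x=J^{1/2}$, $k=J^{3/4}$, and $t=t_i-s_i$, gives
\[
    \P\bigl(\g_i\notin\Pi_i^\hwy\bigr)
    =\P\bigl(\g_i\cap\V_{\lb s_i,\,s_i+J^{3/4}\rb}=\varnothing\ \text{ or }\ \g_i\cap\V_{\lb t_i-J^{3/4},\,t_i\rb}=\varnothing\bigr)
    \le C\exp\bigl(-C'\,J^{1/4}\bigr)=C\exp\bigl(-C'(\log n)^{5/4}\bigr),
\]
where the last equality uses $J^{1/4}\in[(\log n)^{5/4},\,2(\log n)^{5/4}]$ from \eqref{eq:JK}.

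It then remains to set $\EE_2\coloneqq\bigcap_{i=1}^{N}\{\g_i\in\Pi_i^\hwy\}$, on which every $\g_i$ is a local highway by construction, and to bound $\P(\EE_2^c)$ by a union bound: since $N\asymp n^{0.1}$ while $(\log n)^{5/4}$ dominates $\log n$,
\[
    \P\bigl(\EE_2^c\bigr)\le\sum_{i=1}^{N}\P\bigl(\g_i\notin\Pi_i^\hwy\bigr)\le Cn^{0.1}\exp\bigl(-C'(\log n)^{5/4}\bigr)=o(1).
\]
This is exactly the assertion of the lemma.

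I do not expect any genuine obstacle here. The only point that needs a moment's care is the edge index $i=N$, for which $t_N-s_N$ may be strictly smaller than $2J$; this is harmless because $t_N-s_N\ge J$, so the hypotheses of \cref{lem:no_avoiding_initial_segment} are still met for $n$ large and exactly the same tail bound applies. Everything else is a mechanical passage to zero temperature through the dictionary \eqref{eq:dictionary}.
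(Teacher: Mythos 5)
Your proposal is correct and is essentially the paper's own argument: the paper simply notes that the proof of \cref{lem:hwy_reduction} (one application of \cref{lem:no_avoiding_initial_segment} with $x=J^{1/2}$, $k=J^{3/4}$, $t=2J$, followed by a union bound over $i\in\lb 1,N\rb$) transfers to zero temperature via the dictionary of \cref{sec:pos-zero}, which is exactly what you do. Your extra care with the edge index $i=N$ is a harmless refinement the paper glosses over.
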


We define
\[
    x_0^*\coloneqq 0,\quad x_{N+1}^*\coloneqq 0,\quad\text{and}\quad x_i^*\coloneqq \g_i(m_i) \text{ for } i\in\lb 1,N\rb,
\]
as well as
\[
    L_i^*\coloneqq L(x_i^*,m_i;x_{i+1}^*,m_{i+1})-\o(x_{i+1}^*,m_{i+1})\quad\text{for } i\in\lb 0,N-1\rb
\]
and $L_N^* \coloneqq  L(x_N^*,m_N; x_{N+1}^*,m_{N+1})$.
We also write
\[
    L^*\coloneqq \sum_{i=0}^{N} L_i^*,\quad L^0\coloneqq \sum_{i=0}^{N}L_i^0, \quad\text{and}\quad L\coloneqq L(0,0;0,n).
\]
The zero temperature analogues of \cref{lem:argmax_reduction,lem:iid_reduction} are as follows.
\begin{lemma}\label{lem:argmax_reduction0}
    On $\EE_1\cap\EE_2$, we have that $L=L^*$.
\end{lemma}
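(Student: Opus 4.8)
The plan is to show that, on the good event $\EE_1\cap\EE_2$, the left-most geodesic $\G\colon(0,0)\to(0,n)$ passes through the point $(x_i^*,m_i)$ for every $i\in\lb 0,N+1\rb$ (recall $x_0^*=x_{N+1}^*=0$, consistent with $\G(0)=\G(n)=0$), and then to deduce $L=L^*$ from a deterministic telescoping identity. For the latter I would use the elementary fact that the restriction of a left-most geodesic to any sub-strip $\H_{\lb p,q\rb}$ is again \emph{the} left-most geodesic between its two endpoints: if some geodesic between those endpoints were strictly to the left of $\G\cap\H_{\lb p,q\rb}$ at some height, splicing it into $\G$ and then taking the pointwise minimum with $\G$ (which is a geodesic by polymer ordering, \cref{sec:polymer_ordering}) would produce a geodesic $(0,0)\to(0,n)$ strictly to the left of $\G$, contradicting minimality. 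Granting $\G(m_i)=x_i^*$ for all $i$, it follows that $H\bigl(\G\cap\H_{\lb m_i,m_{i+1}\rb}\bigr)=L(x_i^*,m_i;x_{i+1}^*,m_{i+1})$ for each $i\in\lb 0,N\rb$, and summing over $i$ while correcting for the double-counted weights $\o(x_i^*,m_i)$ at the shared heights $m_1,\dots,m_N$ gives
\[
    L=H(\G)=\sum_{i=0}^{N}L(x_i^*,m_i;x_{i+1}^*,m_{i+1})-\sum_{i=1}^{N}\o(x_i^*,m_i),
\]
which is exactly $\sum_{i=0}^N L_i^*=L^*$ once the definitions of the $L_i^*$ are unwound.

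The core of the argument is therefore the claim that $\G(m_i)=x_i^*$ on $\EE_1\cap\EE_2$. Fix $i\in\lb 1,N\rb$. On $\EE_1$ the geodesic $\G$ is constrained, so $\G(s_i)\le J^{1/2}=\g_i(s_i)$ and $\G(t_i)\le J^{1/2}=\g_i(t_i)$; since $\G\cap\H_{\lb s_i,t_i\rb}$ is a left-most geodesic between its endpoints, polymer ordering (the zero-temperature analogue of \cref{lem:polymer_ordering}, via the correspondence of \cref{sec:pos-zero}) gives $\G(t)\le\g_i(t)$ for all $t\in\lb s_i,t_i\rb$. On $\EE_2$ the geodesic $\g_i$ is a local highway, so there exist heights $a_i\in\lb s_i,\,s_i+J^{3/4}\rb$ and $b_i\in\lb t_i-J^{3/4},\,t_i\rb$ with $\g_i(a_i)=\g_i(b_i)=0$; since $0\le\G(a_i)\le\g_i(a_i)$ and $0\le\G(b_i)\le\g_i(b_i)$, this forces $\G(a_i)=\G(b_i)=0$. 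Hence $\G$ and $\g_i$ both pass through the common points $(0,a_i)$ and $(0,b_i)$, so the restrictions $\G\cap\H_{\lb a_i,b_i\rb}$ and $\g_i\cap\H_{\lb a_i,b_i\rb}$ are each the unique left-most geodesic $(0,a_i)\to(0,b_i)$ and therefore coincide; equivalently, one may invoke geodesic coalescence (\cref{lem:coalescence}) to see that the set of heights at which $\G$ and $\g_i$ agree is an interval containing $a_i$ and $b_i$. For $n$ large we have $a_i<m_i<b_i$ (since $a_i\le s_i+J^{3/4}<s_i+J=m_i$ and, for $i\le N-1$, $b_i\ge t_i-J^{3/4}=s_i+2J-J^{3/4}>m_i$), so $\G(m_i)=\g_i(m_i)=x_i^*$.

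The boundary indices $i\in\{0,N\}$ need small modifications, carried out as in the proof of \cref{lem:iid_reduction}: the case $i=0$ is immediate from $m_0=0$ and $x_0^*=0=\G(0)$, while for $i=N$ one argues that $\G$ coincides with $\g_N$ from $\g_N$'s last visit to $\V$ below height $t_N$ up through the common endpoint, which identifies $\G(m_N)$ with $x_N^*$. The step I expect to be the main obstacle is precisely making this coalescence argument airtight: one must check that every restriction of $\G$ invoked above is genuinely a left-most geodesic between its induced endpoints (so that the uniqueness behind \cref{lem:coalescence} applies), and one must treat the terminal strip carefully, since $t_N=(NK+2J)\wedge n$ may equal $n$ and then the height-$J^{3/4}$ highway segment near the top of $\g_N$ need not surround $m_N$ when $n$ lies just above $NK+J$. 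This is the same edge-case bookkeeping already present in the positive-temperature proof; the payoff of doing it carefully is that on $\EE_1\cap\EE_2$ the identity $L=L^*$ holds \emph{exactly}, with no diffusive-scale error term.
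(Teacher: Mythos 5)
Your proof is correct and follows essentially the same route as the paper: the paper likewise uses the constrained and local-highway events together with planarity and coalescence (uniqueness of left-most geodesics, \cref{lem:coalescence}) to force $\G(m_i)=x_i^*$ for all $i$, and then decomposes $H(\G)$ over the strips $\H_{\lb m_i,\,m_{i+1}\rb}$ exactly as in your telescoping identity. Your ordering-based production of the common wall points $(0,a_i),(0,b_i)$ is an equivalent packaging of the paper's planarity-plus-coalescence step, and the $i=N$ boundary subtlety you flag (when $t_N=n$ lies within $J^{3/4}$ of $m_N$) is glossed over in the paper's one-line argument as well, so your treatment is if anything more careful.
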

\begin{lemma}\label{lem:iid_reduction0}
    As $n\to\infty$,
    \[
        \frac{1}{\sqrt n}\left|L^*-L^0\right|\pto 0.
    \]
\end{lemma}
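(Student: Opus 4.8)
The plan is to mimic the proof of \cref{lem:iid_reduction} via the correspondence of \cref{sec:pos-zero}; the zero-temperature version is shorter, since there are no partition functions to manipulate and the role of coalescence is played by an exact geometric identity for the relevant left-most geodesics. Throughout I work on $\EE_1\cap\EE_2$, which has probability $1-o(1)$ and on which $\G\in\Pi^\cons$ (so $\G(s_i),\G(t_i)\le J^{1/2}$) and $\g_i\in\Pi_i^\hwy$ for every $i$. Write $\G^0_i$ (resp.\ $\G^*_i$) for the left-most geodesic $(0,m_i)\to(0,m_{i+1})$ (resp.\ $(x^*_i,m_i)\to(x^*_{i+1},m_{i+1})$), and $\G^0$ (resp.\ $\G^*$) for the concatenation of the $\G^0_i$ (resp.\ $\G^*_i$) over $i\in\lb0,N\rb$; these are legitimate paths $(0,0)\to(0,n)$ since consecutive segments share an endpoint, and one checks directly that $L^0=H(\G^0)$ and $L^*=H(\G^*)$ (the subtracted weights in \eqref{eq:def_Fi} exactly compensate the double counting at the shared endpoints). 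On $\EE_1\cap\EE_2$ we moreover have $\G^*_i=\G\cap\H_{\lb m_i,m_{i+1}\rb}$, by the proof of \cref{lem:argmax_reduction0}, which shows that $\G$ passes through each $(x^*_i,m_i)$.

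The first substantive step is to pin down the coalescence geometry on $\EE_1\cap\EE_2$. Since $\G(s_i),\G(t_i)\le J^{1/2}=\g_i(s_i)=\g_i(t_i)$, polymer ordering (\cref{sec:polymer_ordering}) shows $\G\cap\H_{\lb s_i,t_i\rb}$ lies weakly to the left of $\g_i$; as $\g_i$ is a local highway, this forces $\G$ to touch $\V$ at some height in $\lb s_i,s_i+J^{3/4}\rb$ and some height in $\lb t_i-J^{3/4},t_i\rb$. For $n$ large the heights interleave as $m_i<t_i-J^{3/4}<t_i<s_{i+1}<s_{i+1}+J^{3/4}<m_{i+1}$ (using $K\asymp n^{0.9}\gg J\asymp(\log n)^5$), so applying the previous sentence to $\g_i$ and to $\g_{i+1}$ produces heights $a_i\in\lb t_i-J^{3/4},t_i\rb$ and $b_i\in\lb s_{i+1},s_{i+1}+J^{3/4}\rb$, both in $(m_i,m_{i+1})$ with $a_i<b_i$, at which $\G$ touches $\V$. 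Now $\G^0_i$ and $\G^*_i=\G\cap\H_{\lb m_i,m_{i+1}\rb}$ are left-most geodesics whose endpoints satisfy $0\le x^*_i$ and $0\le x^*_{i+1}$; polymer ordering gives that $\G^0_i$ lies weakly to the left of $\G^*_i$, and \cref{lem:coalescence} gives that $\G^0_i\cap\G^*_i$ is connected in $\H_{\lb m_i,m_{i+1}\rb}$. Both therefore touch $\V$ at $a_i$ and at $b_i$ (they lie to the left of $\G$, which does), hence they coincide on $\H_{\lb a_i,b_i\rb}$. The cases $i\in\{0,N\}$ are the same, using instead the common endpoint $(0,0)$, resp.\ $(0,n)$, shared by $\G^0$ and $\G$.

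Granting this, I split each of $H(\G^0_i)$ and $H(\G^*_i)$ into the energies of its portions in $\H_{\lb m_i,a_i\rb}$, in $\H_{\lb a_i,b_i\rb}$, and in $\H_{\lb b_i,m_{i+1}\rb}$. The middle portions agree, so they cancel in $L^0_i-L^*_i$, leaving a difference of energies of path segments that join an endpoint to a point of $\V$ within $\H_{\lb m_i,t_i\rb}$ or $\H_{\lb s_{i+1},m_{i+1}\rb}$; each such segment has $O(J)$ vertices and lies in the box $\BB_i$ or $\BB_{i+1}$ of \cref{sec:poscomp}, as do the leftover weights $\o(0,m_{i+1})$, $\o(x^*_{i+1},m_{i+1})$. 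Since the weights are nonnegative (\cref{sec:model}\ref{P1}),
\[
    |L^0_i-L^*_i|\ \ls\ J\Bigl(\sup_{v\in\BB_i}\o(v)+\sup_{v\in\BB_{i+1}}\o(v)\Bigr)\qquad\text{on }\EE_1\cap\EE_2,
\]
for all $i\in\lb0,N\rb$ (with obvious modifications at $i=0$ and $i=N$, where only one box occurs). Finally, just as in \eqref{eq:crude_upper}--\eqref{eq:stretchedexpBi}, a union bound over the $O((\log n)^{10})$ sites of each $\BB_i$ together with the subexponential tails of the weights (\cref{sec:model}\ref{P2}) shows that, with probability $1-o(1)$, $\sup_{v\in\BB_i}\o(v)\le n^{0.01}$ holds simultaneously for all $i$; on the intersection of this event with $\EE_1\cap\EE_2$ one gets $|L^0-L^*|\le\sum_{i=0}^N|L^0_i-L^*_i|\ls JNn^{0.01}\ls n^{0.11}(\log n)^5=o(\sqrt n)$, using $J\asymp(\log n)^5$ and $N\asymp n^{0.1}$. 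This yields the claimed convergence in probability.

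The step I expect to be the main obstacle is the second paragraph: verifying that $\EE_1\cap\EE_2$ really does force $\G$, $\G^0_i$, and $\G^*_i$ to coincide on all of $\H_{\lb a_i,b_i\rb}$. This requires combining the constraint on $\G$, the local-highway property of the $\g_i$, polymer ordering, and \cref{lem:coalescence}, and carefully checking the interleaving of the heights $m_i,s_i,t_i,a_i,b_i$---including the degenerate possibility that the final strip $\H_{\lb m_N,n\rb}$ is shorter than $2J$, which must be treated by a trivial bound. Everything after that is bookkeeping parallel to \cref{sec:poscomp}.
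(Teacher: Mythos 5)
Your argument is correct and follows essentially the same route as the paper's proof: on $\EE_1\cap\EE_2$ you use planarity/ordering together with \cref{lem:coalescence} to force the geodesics underlying $L^0$ and $L^*$ to coincide away from the boxes $\BB_i$, and then bound the residual difference by $O\bigl(NJ\sup_{v\in\BB}\o(v)\bigr)=o(\sqrt n)$ via the subexponential union bound \eqref{eq:unionbound}. The differences are only cosmetic: the paper compares $\G$ (using $L=L^*$ from \cref{lem:argmax_reduction0}) with the concatenation $\G^0$ globally and bounds the symmetric difference by $\BB$, whereas you pin each pair $\G^0_i,\G^*_i$ to explicit contact points $(0,a_i),(0,b_i)$ and compare segment by segment.
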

As with the positive temperature case, \cref{lem:argmax_reduction0,lem:iid_reduction0} together imply the approximation \eqref{eq:zero_temp_comparison}, since $\P(\EE_1\cap\EE_2)=1-o(1)$.
Moreover, as \cref{lem:coalescence} trivializes the coupling constructions of \cref{sec:poscomp}, the arguments can be substantially shortened.
\begin{proof}[Proof of \cref{lem:argmax_reduction0}]
    Fix $\o\in\EE_1\cap\EE_2$.
    By planarity,
    \[
        \g_i\cap\G\cap \H_{\lb s_i,\,\,s_i+J^{3/4}\rb} \ne \varnothing
        \quad\text{and}\quad
        \g_i\cap\G\cap \H_{\lb t_i-J^{3/4},\,\,t_i\rb} \ne\varnothing
        \quad\text{for all } i\in\lb 1,N\rb.
    \]
    Therefore by \cref{lem:coalescence}, $\G(m_i)=x_i^*$ for all $i\in\lb 0, N+1\rb$.
    Another application of \cref{lem:coalescence} implies that $L=L^*$.
\end{proof}

For $i\in\lb 0,N\rb$ we denote by $\G_i^0$ the geodesic $(0,m_i)\to(0,m_{i+1})$.
\begin{proof}[Proof of \cref{lem:iid_reduction0}]
    Fix $\o\in\EE_1\cap\EE_2$.
    We have $L=L^*$ by \cref{lem:argmax_reduction0}.
    By planarity,
    \[
        \G_i^0\cap\G\cap \H_{\lb s_i,\,\,s_i+J^{3/4}\rb} \ne \varnothing
        \quad\text{and}\quad
        \G_i^0\cap\G\cap \H_{\lb t_i-J^{3/4},\,\,t_i\rb} \ne\varnothing
        \quad\text{for all } i\in\lb 1,\,N-1\rb,
    \]
    and therefore by \cref{lem:coalescence},
    \[
        \G_i^0\cap\H_{\lb s_i+J^{3/4},\,\, t_i-J^{3/4}\rb} = \G\cap\H_{\lb s_i+J^{3/4},\,\, t_i-J^{3/4}\rb}
        \quad\text{for all } i\in\lb 1,\,N-1\rb.
    \]
    Similarly,
    \[
        \G_0^0\cap\H_{\lb 0,\, s_1\rb}=\G\cap\H_{\lb 0,\, s_1\rb} \quad\text{and}\quad \G_N^0\cap\H_{\lb t_N,\, n\rb}=\G\cap\H_{\lb t_N,\, n\rb}.
    \] 
    It follows that the symmetric difference $(\G\setminus\G^0)\cup(\G^0\setminus\G)$ is a subset of $\BB$, where we define $\G^0\coloneqq \bigcup_i\G^0_i$.
    This implies
    \[
        |L-L^0| \le C N(\log n)^{C'}\,\sup_{v\in\BB}\o(v).
    \]
    \cref{lem:iid_reduction0} now follows from \eqref{eq:unionbound}.
\end{proof}

\section{Fluctuations for free energy with endpoint near the vertical}\label{sec:near_V}

We make a detour to indicate how the ideas of \cref{lem:iid_reduction,lem:iid_reduction0} can be adapted to prove \cref{cor:near_V}.
As discussed in \cref{sec:main_results}, given \cref{thm:main}, the substance of \cref{cor:near_V} is the approximation 
\begin{equation}\label{eq:cor_approx}
    \frac{1}{\sqrt n}|G(0,0;0,n) - G(0,0;y_n,n)| \pto  0\qquad\text{whenever}\quad y_n=o(\sqrt n),
\end{equation}
which we now establish.
\begin{proof}[Proof of \eqref{eq:cor_approx}]
Assume first $G=F$.
Consider polymers $\pi:(0,0)\to(0,n)$ and $\pi^y : (0,0)\to(y_n,n)$, with polymer measures denoted  respectively by $\Q,\Q^y$.
Fix any sequence $w_n$ satisfying $w_n-y_n\uparrow \infty$ and $w_n=o(\sqrt n)$.
By \cref{thm:pinning}, for all sufficiently large $n$ there exists an event $\B$ with $\P(\B)\ge 0.9$ and
\begin{equation}\label{eq:Qy}
    \Q^y\left(\pi^y\cap\V_{\lb n-w_n,\,\,n-y_n\rb} \ne \varnothing\right) \ge 0.99\quad\text{on } \B.
\end{equation}
Assume the event inside $\Q^y$ above occurs.
Then by planarity, $\pi$ and $\pi^y$ intersect inside the box $\lb 0,y_n\rb \times \lb n-y_n, n\rb$.
The proof of \cref{lem:iid_reduction} (viz. \eqref{eq:pigeon3}) now implies that for some $a,a^y\in\lb n-w_n,\,n-y_n\rb$,
\[
    |F(0,0;0,n) - F(0,0;y_n,n)|
    \ls F(1,a+1;0,n) + F(1,a^y+1;w_n,n) + \log (w_n-y_n).
\]
The arguments of \eqref{eq:crude_upper} and \eqref{eq:unionbound} imply that the right side above is $o(\sqrt n)$ with $\P$-probability $1-o(1)$.
The case $G=L$ follows by modifying the proof of \cref{lem:iid_reduction0} in an analogous manner.
\end{proof}

We now establish the linear growth $\Var(G(0,0;y_n,n))\asymp n$.
The proof of \cref{lem:varatmostlinear} applies verbatim to show $\Var(G(0,0;y_n,n))\ls n$.
We prove the lower bound for $G=F$ via a slight modification of the proof of \cref{lem:varatleastlinear} (the same approach works for $G=L$).
Fix $j\in\lb 0,n-w_n\rb$.
Recall that in \eqref{eq:Aj} we defined
\begin{equation}\label{eq:Aj2}
    \A_j\coloneqq \bigl\{\o:\Q\bigl(\pi(j)\in\lb0,x_0\rb\bigr)\ge 0.1\bigr\}.
\end{equation}
Fix $\o\in\A^y_j\coloneqq \A_j\cap\B$ (the event $\B$ is defined above \eqref{eq:Qy}).
Let $\QQ$ be the coupling of $\Q,\Q^y$ provided by \cref{lem:polymer_ordering}.
By planarity and \eqref{eq:Qy}, the polymers $\pi$ and $\pi^y$ typically coincide on $\H_{\lb 0,\,n-w_n\rb}$:
\[
    \QQ\bigl(\pi(i) = \pi^y(i)\bigr) \ge 0.99\quad\text{for all } i\in\lb 0,\,n-w_n\rb.
\]
Therefore by \eqref{eq:Aj2},
\[
    \Q^y\bigl(\pi^y(j) \in \lb0,x_0\rb\bigr) \ge 0.09.
\]
As $\P(\A_j^y)\ge 0.8$ and $n-w_n\asymp n$, the claimed lower bound $\Var(F(0,0;y_n,n))\gs n$ now follows from the proof of \cref{lem:varatleastlinear}. \hfill $\square$


\section{Lindeberg condition}\label{sec:lindeberg}

In this section we prove \cref{thm:main} by combining the preceding results with the Lindeberg central limit theorem.
We recall the latter (e.g. \cite[Theorem 27.2]{BilProbabilityMeasure1995}):
\begin{theorem}[Lindeberg central limit theorem]\label{thm:lindeberg_clt}
    Let $\{\xi_{N,i} : N\ge 1, \, i\in\lb 0,N\rb\}$ be a \emph{triangular array}, i.e. a collection of random variables such that for any $N$, the random variables $\xi_{N,0}, \dots,\xi_{N,N}$ are independent.
    Suppose that $\bE[\xi_{N,i}]=0$ for all $N,i$ and that $\sum_{i=0}^N\Var(\xi_{N,i})=1$ for all $N$.
    Suppose also that
    \begin{equation}\label{eq:lindeberg_condition}
        \lim_{N\to\infty}\sum_{i=0}^{N}\bE\bigl[\xi_{N,i}^2\,\1_{|\xi_{N,i}|>\e}\bigr] = 0\quad\text{for all }\e>0.
    \end{equation}
    Then as $N\to\infty$ we have the convergence in distribution
    \[
        \sum_{i=0}^{N} \xi_{N,i} \dto  \msf{N}(0,1).
    \]
\end{theorem}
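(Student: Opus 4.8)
This is a classical result which the paper invokes by citation (\cite[Theorem 27.2]{BilProbabilityMeasure1995}); were a proof to be supplied, the plan would be to run the standard Lindeberg replacement argument. Write $\sigma_{N,i}^2\coloneqq\Var(\xi_{N,i})$, so that $\sum_{i=0}^N\sigma_{N,i}^2=1$ for every $N$. First I would extract from the Lindeberg condition \eqref{eq:lindeberg_condition} the \emph{uniform asymptotic negligibility} $\max_{0\le i\le N}\sigma_{N,i}^2\to 0$ as $N\to\infty$: for any $\e>0$ one has $\sigma_{N,i}^2\le\e^2+\sum_{j=0}^N\E[\xi_{N,j}^2\1_{|\xi_{N,j}|>\e}]$, and letting $N\to\infty$ and then $\e\downarrow0$ gives the claim. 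This is the precise sense in which no single summand survives in the limit.

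Next I would introduce independent Gaussians $\eta_{N,i}\sim\msf{N}(0,\sigma_{N,i}^2)$, taken independent of the $\xi$'s, and compare characteristic functions. Since $\sum_i\sigma_{N,i}^2=1$, the sum $\sum_i\eta_{N,i}$ is exactly $\msf{N}(0,1)$, so by L\'{e}vy's continuity theorem it suffices to prove $\phi_N(u)\coloneqq\prod_{i=0}^N\E[e^{\mathrm{i}u\xi_{N,i}}]\to e^{-u^2/2}$ for each fixed $u\in\R$. Using the telescoping (hybrid) decomposition that swaps one coordinate at a time, together with the elementary fact that every characteristic-function factor has modulus at most $1$, one reduces to controlling $\sum_{i=0}^N\bigl|\E[e^{\mathrm{i}u\xi_{N,i}}]-\E[e^{\mathrm{i}u\eta_{N,i}}]\bigr|$.

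For each term I would apply the Taylor estimate $|e^{\mathrm{i}x}-1-\mathrm{i}x+\tfrac12 x^2|\le\min(|x|^3,|x|^2)$. As $\xi_{N,i}$ and $\eta_{N,i}$ have matching mean and variance, the $i$-th difference is at most $\E[\min(|u\xi_{N,i}|^3,|u\xi_{N,i}|^2)]+\E[\min(|u\eta_{N,i}|^3,|u\eta_{N,i}|^2)]$. Splitting the $\xi$-expectation at the threshold $\e$ --- the cubic bound on $\{|\xi_{N,i}|\le\e\}$ and the quadratic bound on $\{|\xi_{N,i}|>\e\}$ --- and summing over $i$ with $\sum_i\sigma_{N,i}^2=1$, I get $\sum_i(\cdots)\le|u|^3\e+u^2\sum_{i=0}^N\E[\xi_{N,i}^2\1_{|\xi_{N,i}|>\e}]$; the Gaussian contributions are bounded either by the same splitting or crudely via $\E[|\eta_{N,i}|^3]\asymp\sigma_{N,i}^3\le(\max_j\sigma_{N,j})\,\sigma_{N,i}^2$ combined with step one. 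Thus $\limsup_N|\phi_N(u)-e^{-u^2/2}|\le|u|^3\e$ for every $\e>0$, hence $\phi_N(u)\to e^{-u^2/2}$, and the conclusion follows.

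The only genuine subtlety --- and the step I would watch most carefully --- is the hybrid-replacement bookkeeping: one swaps $\xi_{N,i}\mapsto\eta_{N,i}$ coordinate by coordinate while keeping all remaining factors bounded in modulus by $1$, so that the global discrepancy is honestly the sum of the local discrepancies; after that, everything collapses to the second-order Taylor expansion, whose error is governed \emph{exactly} by \eqref{eq:lindeberg_condition}. A parallel route avoids characteristic functions altogether: fix $f\in C^3$ with bounded derivatives, replace the $\xi$'s by the $\eta$'s one at a time, and Taylor expand $f$ to third order; the bookkeeping and the resulting estimate are identical. In the present paper it is enough to cite the statement, so none of this is carried out.
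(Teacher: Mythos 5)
Your sketch is correct and is essentially the classical characteristic-function proof that the paper's citation (Billingsley, Theorem 27.2) itself follows: uniform negligibility of the variances, Gaussian replacement one coordinate at a time with the factors bounded in modulus by $1$, and the second-order Taylor estimate split at the threshold $\e$, which is exactly where the Lindeberg condition enters. The paper supplies no proof of its own (it only cites the result), so there is nothing further to compare.
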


Fix $G\in\{F,L\}$.
Let all notation be as in \eqref{eq:JK}, \eqref{eq:si_mi_ti}, \eqref{eq:def_Fi}, so that $K\asymp n^{0.9}$ and $N\asymp n/K \asymp n^{0.1}$ and $m_{i+1}-m_i\asymp K$ for $i\in\lb 0,N-1\rb$ (and $m_{N+1}-m_N\ls K$).
By the variance estimates of \cref{lem:varatleastlinear,lem:varatmostlinear}, there exists $\sigma_N\asymp 1$ such that the triangular array
\[
    X_{N,i}\coloneqq \frac{G(0,m_i;\,0,m_{i+1})-\bE[G(0,m_i;\,0,m_{i+1})]}{\sigma_N \sqrt{NK}},\qquad N\ge 1,\,\,i\in\lb 0,N\rb
\]
satisfies $\sum_{i=0}^{N} \Var(X_{N,i})=1$ for all $N$.
We also have $\bE[X_{N,i}]=0$ for all $N,i$.
Finally, we claim that the $X_{N,i}$ also satisfy the Lindeberg condition \eqref{eq:lindeberg_condition}:
\begin{equation}\label{eq:lindeberg}
    \lim_{N\to\infty}\sum_{i=0}^N\E\bigl[X_{N,i}^2\,\1_{|X_{N,i}|>\e}\bigr] = 0 \quad\text{for all }\e>0.
\end{equation}
Given \eqref{eq:lindeberg}, we can combine \cref{thm:lindeberg_clt} with \cref{thm:comparison} to deduce \cref{thm:main}.

\begin{proof}[Proof of \eqref{eq:lindeberg}]
We make a straightforward modification of the proof of the large deviations estimate \cref{lem:large_dev}.
Fix $i\in\lb 0,N\rb$ and write $X\coloneqq X_{N,i}$.
Let $\wh X$ denote the same, but with respect to the truncated environment $\wh\o(x,t)\coloneqq \o(x,t)\wedge n^{0.04}$.
Let $\FF_j$ be the $\sigma$-algebra generated by $\wh\o$ up to height $j$, i.e, $\FF_j\coloneqq\sigma\bigl(\wh\o(x,t):x\in\Zpos,\;t\le j\bigr)$.
The argument of \eqref{eq:conc_bounded_inc} implies that
\[
    \left|\E\bigl[\wh X\given[\FF_j]\bigr] - \E\bigl[\wh X\given[\FF_{j-1}]\bigr]\right| \le 2\frac{n^{0.04}}{\sigma_N\sqrt{NK}}\le Cn^{-0.46}\quad\text{for all } j\in\lb m_i,\,m_{i+1}\rb,
\]
where we used that $NK\asymp n$.
It follows from the Azuma--Hoeffding inequality that, for any $z>0$,
\[
     \P\Bigl(\bigl|\wh X\bigr| > z\Bigr) \le 
     C\exp\left(-C'\frac{z^2}{(m_{i+1}-m_i)\cdot (n^{-0.46})^2}\right)
     \le C\exp\left(-C'z^2\, n^{0.02}\right),
\]
where we used that $m_{i+1}-m_i\asymp K\asymp n^{0.9}$.
Also, the arguments of \eqref{eq:conc_union_bound}, \eqref{eq:conc_expectation} apply verbatim to show that
\[
    0 \le \E\bigl[X - \wh X\bigr] \le C\exp\left(-C'n^{0.02}\right).
\]

Fix $\e>0$.
By combining the above two displays with the argument of \eqref{eq:conc_final}, we conclude that there exists $n_0>0$ such that for all $n\ge n_0$ and for all $z\ge \e$,
\begin{equation}\label{eq:lindeberg_tail}
    \P\bigl(|X|>z\bigr) \le C\exp\left(-C'\,\min\{1,z^2\}\,n^{0.02}\right).
\end{equation}
It follows that $\bE[X^4]=O(1)$.
By the Cauchy--Schwarz inequality and another application of \eqref{eq:lindeberg_tail},
    \[
        \bE[X^2\,\1_{|X|>\e}] 
        \le \bE[X^4]^{1/2}\cdot \P\bigl(|X|>\e\bigr)^{1/2} \le C\exp\left(-C'\,\min\{1,\e^2\}\,n^{0.02}\right).
    \]
Finally, since $n^{0.02}\asymp N^{0.2}$, the above display implies the Lindeberg condition \eqref{eq:lindeberg}.
\end{proof}

\emergencystretch=1em
\printbibliography

\end{document}